\documentclass[a4paper,10pt,fleqn]{amsart}
\pdfoutput=1

\usepackage[utf8]{inputenc}
\usepackage[T1]{fontenc}

\usepackage{mathabx}
\def\VERT{\vvvert}

%\setkomafont{sectioning}{\bfseries\rmfamily}
%\setkomafont{title}{}

\usepackage{todonotes}

\usepackage{microtype}

\usepackage[english]{babel}
\usepackage{csquotes} % removes one warning of biblatex

\usepackage{setspace}
\setstretch{1.07}
%\setdisplayskipstretch{.6}

\usepackage{amsmath,amsthm,amsfonts,amssymb}
\usepackage{mathtools}

% Remve the ugly colon imposed by amsart

\usepackage[shortlabels]{enumitem}

\usepackage{xcolor}

\usepackage{xspace}

\usepackage[hypertexnames=false]{hyperref}

\definecolor{darkblue}{rgb}{0,0,0.5}
\definecolor{cerule}{RGB}{53,122,183}
\definecolor{cardinal}{RGB}{184,32,16}

\hypersetup{
    colorlinks,
    linkcolor={black},
    citecolor={cerule},
    urlcolor={cardinal}
}

\usepackage{float}
\usepackage[]{algpseudocode}
\algrenewcommand\textproc{}

\usepackage{tikz}
\usetikzlibrary{decorations,arrows,automata,trees,fit,shapes,intersections,positioning}
\definecolor{newLightBrown}{RGB}{230,51,18}

\usepackage[ruled]{caption}      % La légende des boites flottantes
\usepackage{subcaption}

\captionsetup{format=hang,justification=raggedright,font=small,width=.95\textwidth,labelfont=it,labelsep=period}
\floatstyle{ruled}
\newfloat{algo}{tp}{lop}
\floatname{algo}{Algorithm}
\setcounter{topnumber}{1}     % Pas plus d'une boite flottante en haut d'une page.
\setcounter{bottomnumber}{1}  % idem mais en bas

\setcounter{tocdepth}{1}

\allowdisplaybreaks[2]

\usepackage[
  citestyle=authoryear-comp,
  bibstyle=authoryear,
  backend=biber,
  isbn=false,
  maxcitenames=2,
  maxbibnames=6,
  % deprecated but useful to work in texlive2016
  firstinits=true,
  sortcites=true,
  url=false
]{biblatex}
\bibliography{maths}

\AtEveryBibitem{\clearfield{month}}
\AtEveryBibitem{\clearfield{day}}

\DeclareNameAlias{sortname}{first-last}

\DeclareFieldFormat{eprint:tel}{%
  \textsc{Tel}\addcolon\space\ifhyperref
    {\href{https://hal.archives-ouvertes.fr/tel-#1}{\nolinkurl{#1}}}
    {\nolinkurl{#1}}}

\def\Id{\operatorname{id}}
\def\one{\mathbf{1}}

\def\bC{{\mathbb{C}}}
\def\bE{{\mathbb{E}}}

\def\bP{{\mathbb{P}}}

\def\bR{{\mathbb{R}}}
\def\bS{{\mathbb{S}}}

\def\cH{{\mathcal{H}}}
\def\cL{{\mathcal{L}}}

\def\cN{{\mathcal{N}}}

\def\cU{{\mathcal{U}}}
\def\cV{{\mathcal{V}}}
\def\cX{{\mathcal{X}}}

\def\bfu{{\mathbf u}}
\def\bfw{{\mathbf w}}
\def\bfv{{\mathbf v}}
\def\bfh{{\mathbf h}}
\def\bfy{{\mathbf y}}

\def\Frob{{\mathrm{Frob}}}

\def\fkg{{\mathfrak g}}

\def\Proj{\bP^n}

\def\mop{\operatorname}
\newcommand\smashintlong[2][]{\int_{\crampedrlap{#2}}\mathrlap{#1} {}_{\phantom{#2}}}
\newcommand\smashint[2][]{\int_{\crampedrlap{#2}}{#1} }

\def\leq{\leqslant}
\def\geq{\geqslant}

\def\det{\mop{det}\nolimits}
\def\codim{\mop{codim}}
\def\epsilon{\varepsilon}
\def\phi{\varphi}

\def\tdert{\tfrac{\ud}{\ud t}}

\def\tn#1{\left\VERT #1 \right\VERT}

\def\rstd{\rho_{\mathrm{std}}}

\newcommand{\abs}[1]{\left| #1 \right|}

\def\ud{\mathrm{d}}

\newcommand{\diffblock}[1]{#1}

\def\eqdef{\stackrel{.}{=}}
\newcommand{\st}{\mathrel{}\middle|\mathrel{}}

\newtheorem{theorem}{Theorem}%[section]
\newtheorem{proposition}[theorem]{Proposition}
\newtheorem{lemma}[theorem]{Lemma}
\newtheorem{corollary}[theorem]{Corollary}

\theoremstyle{definition}

\theoremstyle{remark}

\newtheorem{remark}[theorem]{Remark}

\title[Rigid continuation paths]
{Rigid continuation paths \\
I. Quasilinear average complexity\\for solving polynomial systems}
\author[P. Lairez]{Pierre Lairez}
\address{
Inria Saclay Île-de-France,
91120 Palaiseau,
France}

\date{\today}

\subjclass[2000]{Primary 68Q25; Secondary 65H10, 65H20, 65Y20}

\begin{document}

\begin{abstract}
  How many operations do we need on average to compute an approximate root of a random Gaussian polynomial system? Beyond Smale's 17th problem that asked whether a polynomial bound is possible, we prove a quasi-optimal bound $\text{(input size)}^{1+o(1)}$. This improves upon the previously known $\text{(input size)}^{\frac32 +o(1)}$ bound.

  The new algorithm relies on numerical continuation along \emph{rigid continuation paths}. The central idea is to consider rigid motions of the equations rather than line segments in the linear space of all polynomial systems. This leads to a better average condition number and allows for bigger steps. We show that on average, we can compute one approximate root of a random Gaussian polynomial system of~$n$ equations of degree at most~$D$ in~$n+1$ homogeneous variables with~$O(n^4 D^2)$ continuation steps. This is a decisive improvement over previous bounds that prove no better than~$\sqrt{2}^{\min(n, D)}$ continuation steps on average.
\end{abstract}

\maketitle
\tableofcontents
\section{Introduction}

Following a line of research opened in the 20th century by
\textcites{Smale_1985}{Smale_1986}{Renegar_1987,Renegar_1989}{Demmel_1988}{Shub_1993}{ShubSmale_1996,
  ShubSmale_1994,ShubSmale_1993a,ShubSmale_1993b,ShubSmale_1993,Malajovich_1994}
and developped in the 21st century by \diffblock{\textcite{BurgisserCucker_2013,
  BurgisserCucker_2011, BriquelCuckerPenaEtAl_2014, Malajovich_2018,
  ArmentanoBeltranBurgisserEtAl_2016, ArmentanoBeltranBurgisserEtAl_2018,
  BeltranPardo_2008, Beltran_2011, BeltranPardo_2011, BeltranPardo_2009,
  BeltranPardo_2009a, BeltranShub_2009, Lairez_2017, HauensteinSottile_2012,
  HauensteinLiddell_2016, BatesHauensteinSommeseEtAl_2013}}, to name a few,
I am interested in the number of elementary operations that one needs to compute
one zero of a polynomial system in a numerical setting. On this topic, Smale's
question is a landmark: ``Can a zero of~$n$ complex polynomial equations in~$n$
unknowns be found approximately, on average, in polynomial time with a
uniform algorithm?'' \parencite[17{th} problem]{Smale_1998}. The wording is
crafted to have a positive answer in spite of two major obstacles. The first one
is the NP-completeness of many problems related to deciding the feasability of a
polynomial system. Here, we consider well determined systems (as many equations
as unknowns),
over the complex numbers, in the average (\emph{a fortiori} generic) case, so
there will always be a zero. The second obstacle is the number of zeros: it is not polynomially
bounded in terms of the input size (the number of coefficients that define the
input system). Here, we ask for only one zero and numerical methods can take
advantage of it.

Smale's question is now solved \parencite{BeltranPardo_2009a,
  BurgisserCucker_2011, Lairez_2017}; it is an achievement but not an end. The
most obvious question that pops up is to improve the degree hidden behind the
words ``polynomial time''. This article presents an optimal answer, bringing
down ``polynomial time'', that is~$N^{O(1)}$, where~$N$ is the input size, to
``quasilinear time'', that is $N^{1+o(1)}$. The previous state of the art
was~$N^{\frac32 + o(1)}$ \parencite{ArmentanoBeltranBurgisserEtAl_2016}.

\subsection{State of the art}
\label{sec:state-art}

Let~$n$ and~$d_1,\dotsc,d_n$ be positive integers, and let~$\cH$ be the vector
space of tuples~$(f_1,\dotsc,f_n)$ of complex homogeneous polynomials of respective
degrees~$d_1,\dotsc,d_n$ in the variables~$x_0,\dotsc,x_n$.
Let~$D$ denote $\max(d_1,\dotsc,d_n)$.

We are interested in the average complexity of finding one zero of a polynomial
system, given as an element of~$\cH$. The complexity is measured with respect to the
\emph{input size}, denoted $N$. This is the number of complex coefficients that describe
a system, namely
\[ N \eqdef \dim_{\bC} \cH = \binom{d_1+n}{n} + \dotsb + \binom{d_n+n}{n}. \]
Note that~$N \geq 2^{\min(n, D)}$.
``{Average complexity}'' means that $\cH$ is endowed with a probability measure
(uniform on the unit sphere for some suitably chosen Hermitian norm) and that
the behaviour the algorithms is analyzed \emph{on average}, assuming that
the input is distributed according to this probability measure. We will
make use of randomized algorithms, that draw random numbers
during their execution. In this case, the average complexity is an average with
respect to both the input's distribution and the randomness used internally by the algorithm.

\subsubsection{Classical theory}

In the Shub--Smale--Beltrán--Pardo--Bürgisser--Cucker way of doing things, we
compute a zero of a homogeneous polynomial system~$F \in \cH$ by numerical
continuation from a random system~$G\in \cH$ of which we happen to know a zero
$\zeta \in \bP(\bC^{n+1})$. The continuation is performed along the
deformation~$F_t \eqdef \frac{1}{\|tF+(1-t)G\|}(t F + (1-t)G)$. Starting
from~$t=0$, we repeatedly increment the parameter~$t$ and track a zero of~$F_t$
with a projective Newton iteration applied to the previous approximation of the
zero. If the increment is small enough then we can be sure not to loose the zero
and to obtain, when~$t$ reaches~$1$, an approximate zero of the target
system~$F$. The total complexity of the algorithm depends on the number of
continuation steps that are performed, which in turn depends on the size of the
increment. The key issue is to specify how small is ``small enough''.

\textcite*{Smale_1986} gave a sufficient condition for the Newton iteration to
converge in terms of the \emph{gamma number} $\gamma(F, z)$, depending on a
polynomial system~$F$ and a point~$z$
(see \S \ref{sec:splitgamma}\eqref{eq:24} for a definition). 
Difficulties in estimating the variations of~$\gamma(F, z)$ with respect to~$F$
led \textcite{ShubSmale_1993b} to consider the \emph{condition number}
$\mu(F,z)$, which upperbounds~$\gamma(F,z)$ and characterizes how much a
zero~$z$ of a system~$F$ is affected by a small pertubation of~$F$.
They gave a sufficient condition for a continuation step to be small enough
in terms of $\mu$.
After some
refinements, \textcite{Shub_2009} proved that~$K(F,G,\zeta)$, the minimal number
of steps to go from~$G$ to~$F$ while tracking the zero~$\zeta$, is bounded by
\begin{equation}\label{eq:1}
K(F,G,\zeta) \leq \text{(constant)} \int_0^1 \mu(F_t,\zeta_t) \sqrt{\|\dot
    F_t\|^2 + \|\dot \zeta_t\|^2} \ud t.
\end{equation}
This is called the ``$\mu$ estimate''. Explicit algorithms that achieve this
bound have been designed by \textcite{Beltran_2011,DedieuMalajovichShub_2013,HauensteinLiddell_2016}.
A simpler but weaker form, called the ``$\mu^2$ estimate'', reads
\begin{equation}\label{eq:2}
K(F,G,\zeta) \leq \text{(constant)} D^\frac32  d_\bS(F,G) \int_0^1 \mu(F_t,\zeta_t)^2 \ud t,
\end{equation}
where~$d_\bS(F,G)$ is the distance in the unit sphere $\bS(\cH)$ from~$F$ to~$G$, that is the
length of the continuation path. It is often used in practice because it is much
easier to design algorithms that achieve this bound rather than the former. In
one form or the other, this kind of integral estimate for the number of steps is
the first mainstay of the method.

The second mainstay is a procedure discovered by \textcites{BeltranPardo_2011}
and simplified by \textcite{BurgisserCucker_2011} to sample a Gaussian random
system~$G\in\cH$ together with one of its zeros without the need for solving a
polynomial system: (1)~sample a random Gaussian linear map~$L : \bC^{n+1} \to
\bC^n$, (2)~compute a nonzero vector~$\zeta \in \bC^{n+1}$ in the kernel of~$L$
and (3)~sample a random Gaussian system in the affine subspace of~$\cH$ of all
systems~$G$ such that~$G(\zeta) = 0$ and~$\ud_\zeta G = L$. By construction, we
obtain a system~$G$ and one of its zeros~$\zeta$. Less trivially, $G/\|G\|$ is
uniformly distributed in the sphere~$\bS(\cH)$. We could think of a simpler
procedure that (1)~samples some~$\zeta \in \bC^{n+1}$ isotropically and
(2)~samples a random Gaussian system in the linear subspace of~$\cH$ of all
systems~$G$ such that~$G(\zeta) = 0$. This also gives a random system with one
of its zeros, by construction, but the system is not uniformly distributed in the
sphere after normalization.

These two mainstays together give a randomized algorithm to compute a zero of a
polynomial system and a way to analyze its average complexity on a random input.
On input $F\in \bS(\cH)$, the algorithm is: (1)~uniformly sample a random
system~$G\in \bS(\cH)$ together with a zero~$\zeta$, (2)~perform the numerical
continuation from~$G$ to~$F$ tracking the zero~$\zeta$. If~$F$ itself is a
uniformly distributed random variable, then for any~$t\in[0,1]$, $F_t$ is also uniformly
distributed, so~$(F_t,\zeta_t)$ has the same distribution as~$(G,\zeta)$.
Therefore, the average number of steps performed by the algorithm is bounded by
\begin{align*}
  \bE\left[K(F, G, \zeta)\right] &\leq \text{(constant)} \, \bE \left[  D^\frac32 d_\bS(F,G) \int_0^1 \mu(F_t,\zeta_t)^2 \ud t \right]\\
                      &\leq \text{(constant)}  D^\frac32 \int_0^1 \bE\left[\mu(F_t,\zeta_t)^2\right] \ud t\\
                      &\leq \text{(constant)}  D^\frac32 \, \bE \left[ \mu(G,\zeta)^2 \right].
\end{align*}

This leads us to the third mainstay: estimates for~$\bE \left[ \mu(G,\zeta)^2
\right]$. \textcite[Theorem~23]{BeltranPardo_2011} proved that~$\bE \left[
  \mu(G,\zeta)^2 \right] \leq nN$. Therefore, the average number of steps
performed by the algorithm on a random input is
\[ \bE[K(F, G, \zeta)] \leq \text{(constant)} nN. \]
The cost of each continuation step (basically, the computation of~$\mu$ and a Newton's
iteration) can be done in~$O(N)$ operations (when~$D \geq 2$). All in all, the
total average complexity of the classical algorithm is~$O(n D^\frac32 N^2)$ as~$N\to
\infty$. When $\min(n,D)\to \infty$, then this is~$N^{2+o(1)}$.

\subsubsection{Improvements}

How can we improve upon this complexity bound? We cannot do much about the
$O(N)$ cost of a continuation step, as it is already optimal. Concerning the number
of steps, we can try to use the $\mu$ estimate instead of the $\mu^2$ estimate.
Bounding~$\|\dot \zeta_t\|$ by~$\mu(F_t,\zeta_t)\|\dot F_t\|$ (which turns the
$\mu$~estimate into the $\mu^2$~estimate) is optimal in the worst case, but on
average, when the direction~$\dot F_t$ is random, this is pessimistic.
Building upon this idea, \textcite{ArmentanoBeltranBurgisserEtAl_2016} proved
that~$O(nD^\frac32 N^\frac12)$ continuation steps are enough on average.
This leads to a total average complexity of~$N^{\frac32 + o(1)}$ operations.

\textcite{BeltranShub_2009} proved that there exist continuation paths that
makes the $\mu$ estimate polynomially bounded in terms of~$n$ and~$D$. The
construction is explicit but it requires the knowledge of a zero of a target
system. This prevents it from being used algorithmically. Yet, it was the first
time that the possibility of performing numerical continuation in very few steps
(polynomially many with respect to~$n$ and~$D$, not~$N$) was supported.

Lastly,
let us mention that \textcite{HauensteinLiddell_2016} developped a $\gamma$
estimate,
based on Smale's $\gamma$ number.
It may be used as a starting point to obtain
very similar results to ours in a more traditional context. However, this
direction is yet to be explored.

\subsection{Contribution}

I describe a randomized algorithm that uses a numerical continuation from a
random start system to find one root of the input system. It performs $O(n^4 D^2)$
steps on average (and each step costs~$O(n^2D^2N)$ operations) for random
Gaussian systems. This
leads to a total average complexity of~$O(n^6 D^4 N)$ operations as~$N\to
\infty$ to find one approximate root of a random Gaussian system (Theorem~\ref{thm:main-result}). When
$\min(n,D)\to \infty$, this is~$N^{1+o(1)}$.
The algorithm relies on analogues
of the three mainstays of the classical theory: integral estimate for the number
of steps, randomization of the start system and average analysis of some
condition number. However, the basic tools are thoroughly renewed.

The starting point is the observation that a typical system in~$\cH$ is
poorly conditioned. As mentionned above, the expected squared condition number of a
random system at a random zero is bounded by~$nN$ and it turns out that this is
rather sharp. In view of Smale's question, this is satisfying, much more than
bounds involving the total number of zeros, but this~$N$ is the limit of the
method.

To improve the average conditioning, an idea is to define the notion of
conditioning with respect to a much lower dimensional parameter space, but still
big enough to be able to develop an analogue of Beltrán and Pardo's algorithm. I
propose here the \emph{rigid} setting, where the parameter space is not the
whole space of polynomial systems, but the group~$\cU$ made of~$n$ copies of the unitary
group~$U(n+1)$, of real dimension~$\sim n^3$. It acts by rigid motions on
the $n$~components of a fixed, well determined, polynomial system.
Figure~\ref{fig:rigid-path} illustrates a \emph{rigid continuation path.}

Less parameters is less opportunities for a dramatic pertubation that will ruin
the conditioning of a system. Beyond that, the continuation paths in the rigid
setting preserve the geometry of the input equations. This opens a way for
studying the average complexity of solving certain \emph{structured} systems.
Forthcoming work will address this topic.

A noteworthy contribution is the introduction of the \emph{split
  gamma number} which tightly upper bounds Smale's gamma number
(Theorem~\ref{thm:gamma-leq-sgamma})
and which allows for interesting average analysis, see~\S \ref{sec:gauss-rand-syst}.
Finally, the foremost outcome of the rigid setting is
Theorem~\ref{thm:complexity-uncouple}
which gives an average bound on the necessary number of continuation steps to
compute one root of a random system, with only a unitary invariance hypothesis
on the probability distribution.

\begin{figure}[t]
  \centering
  \begin{minipage}[t]{.3\linewidth}
    \begin{tikzpicture}[]
      \draw[thick] (0,0) circle (1); \path[fill=newLightBrown] (-.6,.8) circle
      (.15em);
      \begin{scope}
        \clip (-1.5,-1.5) rectangle (1.5,1.5); \draw[name path=L,thick] (-2,1)
        -- (2, 0); \path[fill=newLightBrown] (.666,.3333) circle (.15em);
      \end{scope}
    \end{tikzpicture}
    \subcaption{Compute one solution of each equation.}
  \end{minipage}\hfill
  \begin{minipage}[t]{.3\linewidth}
    \begin{tikzpicture}[] \draw[name path=C,thick] (0,0) circle (1);

      \begin{scope} \clip (-1.5,-1.5) rectangle (1.5,1.5); \foreach \k in
        {4,...,4} { \draw[name path=L,thick] (-2,-\k+1) -- (2, \k+\k*\k/4);
          \path[name intersections={of=C and L}];
          \path[fill=newLightBrown] (intersection-1) circle (.15em); }
      \end{scope}
    \end{tikzpicture}
    \subcaption{Move the hypersurfaces to make the solution match.}
    \end{minipage}\hfill
    \begin{minipage}[t]{.3\linewidth}
      \begin{tikzpicture}[] \draw[name path=C,thick] (0,0) circle (1);
        \begin{scope} \clip (-1.5,-1.5) rectangle (1.5,1.5); \foreach \k in
          {0,...,4} { \draw[name path=L,opacity=1-.2*(\k),thick] (-2,-\k+1) --
            (2, \k+\k*\k/4); \path[name intersections={of=C and L}];
            \path[fill=newLightBrown,opacity=1-.2*(\k)] (intersection-1) circle
            (.15em); }
        \end{scope}
      \end{tikzpicture}
      \subcaption{Continuously return to the original position while tracking
        the solution.}
    \end{minipage}
  \caption{Resolution of a polynomial system with a rigid continuation path.
    \label{fig:rigid-path}}
\end{figure}
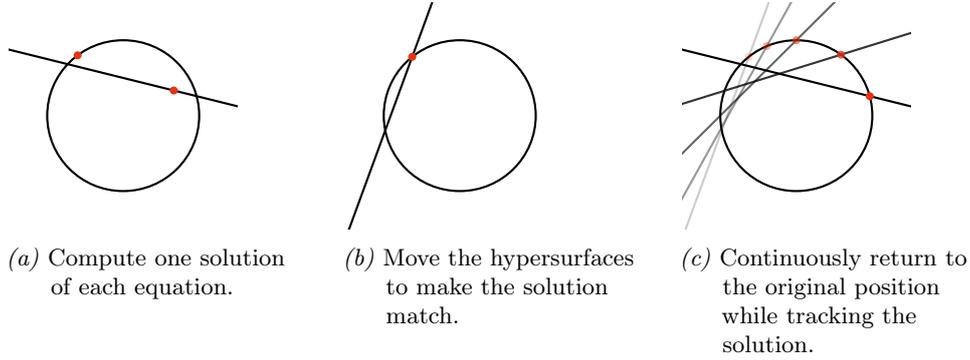

\subsubsection*{Ackowledgment}

It is my pleasure to thank Carlos Beltrán, Peter Bürgisser and Felipe Cucker for
many helpful discussions and valuable comments.
I am very grateful to the referees for their conscientious work.

\subsection{Notations and basic definitions}
\label{sec:notat-basic-defin}

{\raggedright

\begin{description}[align=left,labelwidth=6em,leftmargin=6em,style=nextline]
\item[$n$] some positive integer (used as the \emph{number of nonhomogeneous variables}).
\item[$\bP^n$] complex projective space of dimension~$n$.
\item[{$[z]$}] projective class of some nonzero~$z\in\bC^{n+1}$.
\item[$d_\bP$] geodesic distance on~$\Proj$ endowed with the Fubini-Study
  metric, that is $d_\bP([x], [y]) = \mop{arcsin} \left(
      \sqrt{1-| \langle x, y\rangle |^2} \right)$ for any $x, y \in \bS(\bC^{n+1})$.
\item[$H_d$] space of complex homogeneous polynomials of degree~$d$
  in~$x_0,\dotsc,x_n$.
\item[$r$] some positive integer (used as the \emph{number of equations}).
\item[$d_1,\dotsc,d_r$] some positive integers (used as the \emph{degrees of
    the equations}).
\item[$D$] the maximum of~$d_1,\dotsc,d_r$.
\item[{$\cH[r]$}] space of homogeneous systems of~$r$ equations of
  degree~$d_1,\dotsc,d_r$, that is~$H_{d_1}\times \dotsb \times H_{d_r}$.
  Elements of~$\cH[r]$ are often considered as polynomial maps~$\bC^{n+1}\to\bC^r$.
\item[$N$] the \emph{input size}, defined as~$\dim_\bC \cH[r]$.
\item[$U(k)$] group of unitary $k\times k$ matrices.
\item[$u^*$] conjugate transpose of~$u$.
\item[$\cU$] the group of $r$-uples of unitary matrices, $U(n+1)^r$. Elements
  of~$\cU$ are denoted in boldface, like~$\bfu$.
\item[$\one_\cU$] $(\Id,\dotsc,\Id)$, the neutral element in~$\cU$.
\item[$\| - \|$] norm in a Hermitian space.
\item[$\| - \|_W$] Weyl norm of a polynomial \parencite[see][\S16.1]{BurgisserCucker_2013}.
\item[$\tn{-}$] operator norm of a map between Hermitian spaces.
  For a multilinear map~$\phi : E^k \to V$, this is $\sup \left\{
    \|\phi(e_1,\dotsc,e_k)\| \st \|e_1\| = \dotsb = \|e_k\| = 1 \right\}$.
\item[$\| - \|_\Frob$] Frobenius norm of a map between Hermitian spaces.
\item[$\|-\|_u$] $1/\sqrt{2}$ times~$\|-\|_\Frob$ (used as the Riemannian metric
  on the tangent spaces of~$U(n+1)$).
\item[$\phi^\dagger$] Moore-Penrose pseudo-inverse of a surjective linear map~$\phi
  : E \to F$, it is the unique linear map  $\psi : F\to E$ such that~$\phi \psi = \Id_F$
  and~$\psi\phi$ is the orthogonal projection onto the row space of~$\phi$ (the
  orthogonal complement of the kernel).
\item[$\ud_z F$] the derivative of some polynomial map~$F : \bC^{n+1} \to \bC^r$
  at~$z\in\bC^{n+1}$. We will use the same notation with~$z\in \Proj$, which means
  that we choose a representative $\bar z \in \bC^{n+1}$ of~$z$ such
  that~$\|\bar z\|=1$.
\item[$\ud_z F^\dagger$] the pseudo-inverse of the derivative.
\item[$\cN_F$] projective Newton's operator associated to~$F$
\item[$\eqdef$] ``is defined as''
\item[$A = O(B)$ as $C\to \infty$]
  ``there are~$C_0 \geq 0$ and~$k \geq 0$ such that~$C \geq C_0 \Rightarrow A \leq kB$''.
\item[standard normal variable]
  a Gaussian random variable of an Euclidean space with unit covariance matrix
  in some orthonormal basis.
  The notion is relative to the underlying Euclidean inner product.
  For a Hermitian space, we consider the induced Euclidean structure.
% \item[Kostlan random polynomial]
%   a standard normal variable in some~$H_d$.
% \item[Kostlan random system]
%   a standard normal variable in~$\cH[r]$.
\end{description}}

\section{Rigid solution varieties}
\label{sec:rigid-solut-vari}

The classical solution variety is the subvariety of~$\cH[r]\times \Proj$ of
all~$(F,\zeta)$ such that~$\zeta$ is a zero of~$F$. We now introduce an
analogue variety in the rigid setting. 

Let~$X_1,\dotsc,X_r$ be pure-dimensional subvarieties of $\Proj$, with~$\sum_i
\mop{codim} X_i \leq n$.
%We are basically interested in computing a point in the the
%intersection $\cap_i X_i$ with numerical continuation.
Let~$\cU$ denote the group~$U(n+1)^r$. It acts naturally on the
product~$(\Proj)^r$ of $r$~copies of the projective space. We denote its elements
in boldface~$\bfu = (u_1,\dotsc,u_r)$. Let~$\cX$ denote the product
variety~$X_1\times\dotsb\times X_r \subset (\Proj)^r$. For~$\bfu \in \cU$,
let~$\bfu \cX$ denote the image of~$\cX$ under the action of~$\bfu$, that
is~$\prod_{i=1}^r u_iX_i$, and let~$\cap \bfu\cX$ denote the intersection
$\cap_{i=1}^r u_i X_i \subseteq \Proj$. The \emph{rigid solution variety} is
defined as
\[ \cV \eqdef \left\{ (\bfu, x)\in \cU \times \Proj \st x \in \cap \bfu\cX
  \right\}.\]
There is not a single solution variety, but rather any choice of
subvarieties~$X_1,\dotsc,X_r$ leads to a solution variety.
In this section, we will study the geometry of~$\cV$ with~$X_1,\dotsc,X_r$
fixed. Later on, we will assume that $X_1,\dotsc,X_r$ are hyper\-surfaces
defined by random polynomials.

Let~$\mathbb{G}(k)$ denote the Grassmannian of $k$-dimensional projective subspaces
of~$\Proj$, that is $k+1$-dimensional linear subspaces of~$\bC^{n+1}$. For a smooth
point~$x \in X_i$, the projectivization of the tangent space of the cone over~$X_i$ at
some representative~$\bar x \in \bC^{n+1}$ of~$x$ is an element
of~$\mathbb{G}(k_i)$ and is denoted~$\mathbb{T}_x X_i$.
If~$X_i$ is the zero set of some homogeneous polynomial system~$F_i\in \cH[m]$,
then~$\mathbb{T}_x(u_i X_i)$ is the projectivization of the kernel of~$\ud_x(F_i\circ u_i^{-1})$.
Let~$\cL \eqdef \mathbb{G}(\dim X_1) \times \dotsb
\times \mathbb{G}(\dim X_r)$, and for $\bfh = (h_1,\dotsc,h_r)\in\cL$, let~$\cap
\bfh$ denote the intersection of the~$h_i$ in~$\Proj$. To a generic point $(\bfu, x)$
of~$\cV$, we associate the linearization
%\[ T_x( \bfu\cX ) \eqdef \left( T_x(u_1 X_1),\dotsc,T_x(u_m X_m)  \right) \in \cL. \]
\[ L(\bfu,x) \eqdef \big( \mathbb{T}_x(u_1 X_1),\dotsc, \mathbb{T}_x(u_r X_r) \big)
  \in \cL.\]
Note that~$x\in \cap L(\bfu, x)$.

This section aims at three goals: describe precisely the so-called
\emph{standard} distribution on~$\cV$ (Theorem~\ref{thm:linearization}), give an algorithm to
sample from this distribution (Algorithm~\ref{algo:sample}) and
define the \emph{split gamma number}, a variant of the gamma number well adapted
to the rigid setting.

\subsection{Determinant of subspaces and incidence condition number}
\label{sec:orth-determ-incid}

Let~$E_1$,\ldots, $E_r$ be nonzero linear subspaces of a Hermitian space~$V$.
Let~$\pi_i$ be the orthogonal projector on~$E_i$.
%Let~$P : \sum_i E_i \to \sum_i E_i$ be
%\[ P \eqdef \pi_1 + \dotsb + \pi_r. \]
We define the \emph{multiprojection} map $\mop{proj}(E_1,\dotsc,E_r)$ by
\begin{align*}
  \mop{proj}(E_1,\dotsc,E_r) : V &\longrightarrow E_1 \times \dotsb \times E_r \\
  v &\longmapsto ( \pi_{i}v, \dotsc, \pi_{r} v).
\end{align*}
We say that the family~$E_1,\dotsc,E_r$ is \emph{nondegenerate} if~$\sum_i \dim E_i =
\dim \left( \sum_i E_i \right)$, or, equivalently, when the multiprojection map
is surjective.

%where~$\pi_E$ denotes the orthogonal projection onto a subspace~$E\subseteq \bC^{n+1}$.

% Note that the Hermitian transpose of~$\mop{proj}(E_1,\dotsc,E_r)$ is 
% the map~$\mop{sum}(E_{1},\dotsc,E_r)$ defined by
% \begin{align*}
%   \mop{sum}(E_1,\dotsc,E_r) : E_1\times \dotsb \times E_r &\longrightarrow \bC^{n+1}\\
%  (u_1,\dotsc,u_r) &\longmapsto u_1 + \dotsb + u_r.
% \end{align*}

We define the \emph{determinant} of~$E_1,\dotsc,E_r$ as
\[ \det(E_1,\dotsc,E_r) \eqdef \abs{ \det \left( 
      \mop{proj}(E_1,\dotsc,E_r)_{| E_1 + \dotsb + E_r }
    \right)}, \]
in the nondegenerate case and~$\det(E_1,\dotsc,E_r) \eqdef 0$ otherwise.
Note that the determinant of a map between two Hermitian spaces is well defined
up to multiplication by some~$e^{i\theta}$, so that the modulus is well defined.
We also define the \emph{orthogonal determinant} of~$E_1,\dotsc,E_r$ as
\[ \det^\perp(E_1,\dotsc,E_r) \eqdef \det(E_1^\perp,\dotsc,E_r^\perp). \]
Lastly, we define the \emph{incidence condition number} of~$E_1,\dotsc,E_r$ as
\[ \kappa(E_1,\dotsc,E_r)\eqdef \tn{\mop{proj}(E_1,\dotsc,E_r)^\dagger} \]
when the multiprojection map is surjective, and~$\kappa(E_1,\dotsc,E_r) \eqdef
\infty$ otherwise.
With the appropriate distance, the incidence condition number is the inverse of
the distance of the tuple~$(E_1,\dotsc,E_r)$ to the closest~$(F_1,\dotsc,F_r)$
such that $\dim \left( \sum_i F_i \right) < \sum_i \dim F_i$
\parencite[Theorems~1.1 and~1.3]{BreidingVannieuwenhoven_2018}.

% The orthogonal determinant is also the square zero of the Gram determinant of
% the union of unitary bases of~$E_1,\dotsc,E_r$.

\begin{lemma}
  \label{lem:kappa-geq-1}
  For any subspaces~$E_1,\dotsc,E_r \subseteq V$,
  $\kappa(E_1,\dotsc,E_r) \geq 1$, with equality if and only if~$E_1,\dotsc,E_r$ are
  orthogonal subspaces.
\end{lemma}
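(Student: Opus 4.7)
The plan is to reduce everything to the spectrum of the self-adjoint operator $\phi^*\phi$ on $V$, where $\phi = \mop{proj}(E_1,\dotsc,E_r)$. The degenerate case is trivial since $\kappa = \infty$, so I focus on the nondegenerate case, where $\phi$ is surjective. The kernel of $\phi$ consists of vectors orthogonal to every $E_i$, hence $\ker \phi = \left(\sum_i E_i\right)^\perp$ and the row space (the orthogonal complement of the kernel) equals $\sum_i E_i$. Since $\phi^*(e_1,\dotsc,e_r) = \sum_i \pi_i^* e_i = \sum_i e_i$ and $\pi_i^* = \pi_i = \pi_i^2$, one computes $\phi^*\phi = \sum_i \pi_i$ on $V$.

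Next I exploit that $\tn{\phi^\dagger} = 1/\sigma_{\min}(\phi)$, where $\sigma_{\min}(\phi)$ is the smallest singular value on the row space, equivalently the square root of the smallest nonzero eigenvalue of $\phi^*\phi$. Taking the trace:
\[ \mop{tr}(\phi^*\phi) = \sum_i \mop{tr}(\pi_i) = \sum_i \dim E_i = \dim\left(\sum_i E_i\right) \]
where the last equality uses the nondegeneracy hypothesis. The rank of $\phi^*\phi$ equals $\dim \sum_i E_i$, so the sum of its nonzero eigenvalues equals their count. Since all of them are positive, the smallest is at most their average, which is $1$. Hence $\sigma_{\min}(\phi)^2 \leq 1$, giving $\kappa \geq 1$.

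For the equality case: $\kappa = 1$ forces $\sigma_{\min}(\phi) = 1$, and combined with the average bound this means every nonzero eigenvalue of $\phi^*\phi$ equals $1$, so $\sum_i \pi_i$ acts as the identity on $\sum_i E_i$. The key step is to extract orthogonality from this identity. For any $v \in E_j$, I have $\langle \pi_i v, v \rangle = \|\pi_i v\|^2$ and $\pi_j v = v$, so
\[ \|v\|^2 = \left\langle \sum_i \pi_i v, v \right\rangle = \|v\|^2 + \sum_{i \neq j} \|\pi_i v\|^2, \]
forcing $\pi_i v = 0$ for every $i \neq j$, hence $E_j \perp E_i$. The converse is immediate: if the $E_i$ are pairwise orthogonal, $\sum_i \pi_i$ is the orthogonal projector onto $\sum_i E_i$ and all its nonzero eigenvalues are $1$. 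The only subtle point is being careful with the nondegeneracy hypothesis when deriving $\mop{tr} = \mop{rank}$; this uses the equivalent characterization of nondegeneracy already recorded in the preceding paragraph.
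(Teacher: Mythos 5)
Your proof is correct but follows a genuinely different route from the paper's. The paper works directly with the left inverse $\mop{sum}(E_1,\dotsc,E_r)^\dagger$: since $\mop{sum}^\dagger$ is a left inverse of the injective map $\mop{sum}$, one gets $\sum_i\|v_i\|^2 \leq \tn{\mop{sum}^\dagger}^2\|\sum_i v_i\|^2$ for all $v_i\in E_i$, and then the lower bound $\kappa\geq1$ follows from specializing to a single nonzero $v_j$; the equality case is read off by expanding $\|v_i+v_j\|^2$ and applying the inequality to $\pm v_i$. You instead reason spectrally via $P = \phi^*\phi = \sum_i\pi_i$, using $\kappa = 1/\sigma_{\min}(\phi)$ together with the trace identity $\mop{tr}(P) = \sum_i\dim E_i$; nondegeneracy makes the trace equal the rank, so the nonzero eigenvalues of $P$ average to one and the smallest is at most one. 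This is a cleaner structural explanation for the inequality (it's an averaging phenomenon on the spectrum of $P$), and it yields the equality case in one stroke: all nonzero eigenvalues equal one, so $P$ acts as the identity on $\sum_i E_i$, and then evaluating $\langle Pv,v\rangle$ at $v\in E_j$ kills the cross-projections. The paper's route is shorter and avoids any appeal to singular values or the relation $\kappa^2 = \tn{P^{-1}}$ (which the paper only mentions as an aside after Lemma~\ref{lem:multiproj-gram}); yours makes that aside do the main work and, as a bonus, exposes where nondegeneracy is essential (turning $\mop{tr}=\mop{rank}$). Both are sound; yours is arguably more conceptual, the paper's more self-contained.
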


\begin{proof}
  If the family~$E_1,\dotsc,E_r$ is degenerate
  then $\kappa(E_1,\dotsc,E_r)= \infty \geq 1$,
  so we may assume it is nondegenerate.
  The Hermitian transpose of~$\mop{proj}(E_1,\dotsc,E_r)$ is 
  the map~$\mop{sum}(E_{1},\dotsc,E_r) : E_1\times \dotsb\times E_r \to V$ defined by
  \begin{equation*}
    \mop{sum}(E_1,\dotsc,E_r) :   (u_1,\dotsc,u_r) \longmapsto u_1 + \dotsb + u_r,
  \end{equation*}
  so that~$\kappa(E_1,\dotsc,E_r) = \tn{\mop{proj}(E_1,\dotsc,E_r)^\dagger} = \tn{\mop{sum}(E_1,\dotsc,E_r)^\dagger}$.
  In the nondegenerate case, $\mop{sum}(E_1,\dotsc,E_r)$ is injective,
  and~$\mop{sum}(E_1,\dotsc,E_r)^\dagger$ is a left inverse, that is,
  for any~$(v_1,\dotsc,v_r) \in \prod_i E_i$,
  \begin{equation*}
    (v_1,\dotsc,v_r) = \mop{sum}(E_1,\dotsc,E_r)^\dagger (v_1 +\dotsb + v_r),
  \end{equation*}
  and, in particular,
  \begin{equation}\label{eq:49}
    \sum_i \|v_i\|^2 \leq \tn{ \mop{sum}(E_1,\dotsc,E_r)^\dagger }^2 \bigg\| \sum_i v_i \bigg\|^2.
  \end{equation}
  Choosing~$v_j \neq 0$ for some~$j$  and~$v_i = 0$ for~$i \neq j$ shows that
  $\tn{\mop{sum}(E_1,\dotsc,E_r)^\dagger } \geq 1$.

  If $\tn{\mop{sum}(E_1,\dotsc,E_r)^\dagger } = 1$,
  we have by~\eqref{eq:49}, for any~$v_i \in E_i$ and~$v_j\in E_j$, $i\neq j$,
  \begin{equation*}
    \|v_i\|^2+\|v_j\|^2 \leq \|v_i + v_j\|^2 = \|v_i\|^2 + \|v_j\|^2 + 2 \Re \langle v_i, v_j \rangle,
  \end{equation*}
  which implies that the real part of~$\langle v_i,v_j \rangle$ is nonnegative.
  Since it holds for~$-v_i$ and~$v_j$ too, it follows that~$E_i$ and~$E_j$
  are orthogonal.
  Conversely, if $E_1,\dotsc,E_r$ are orthogonal, then the map~$\mop{sum}(E_1,\dotsc,E_r)$
  is an isometric embedding, and thus~$\kappa(E_1,\dotsc,E_r) = 1$.
\end{proof}

\begin{lemma}
  \label{lem:multiproj-gram}
  Let~$P \eqdef \pi_1 +\dotsb+\pi_r$, it is a self-adjoint endomorphism of the
  subspace~$\sum_i E_i\subseteq V$.
  In the nondegenerate case,
  $\det(E_1,\dotsc,E_r)^2 = \det P$.
\end{lemma}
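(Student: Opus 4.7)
The plan is to factor the self-adjoint operator $P$ through the multiprojection map and then apply the usual relationship $|\det \phi^*\phi| = |\det \phi|^2$ for a linear map between Hermitian spaces of the same dimension.

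First I would identify $\mop{sum}(E_1,\dotsc,E_r)$ from the proof of the previous lemma as the Hermitian adjoint of the multiprojection $\mop{proj}(E_1,\dotsc,E_r)\colon V \to \prod_i E_i$: for any $v \in V$ and $(w_1,\dotsc,w_r) \in \prod_i E_i$, one has $\langle (\pi_i v)_i, (w_i)_i \rangle = \sum_i \langle v, \pi_i w_i \rangle = \langle v, \sum_i w_i \rangle$ because each $\pi_i$ is self-adjoint and fixes $w_i$. Then I would observe that the same identity remains true if $\mop{proj}$ is restricted to the subspace $\sum_i E_i \subseteq V$, so $\mop{sum}$ is still the adjoint of $\mop{proj}_{| \sum_i E_i}$.

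Next I would verify, by unfolding definitions, that the composition $\mop{sum} \circ \mop{proj}_{|\sum_i E_i}$ coincides with $P$ as an endomorphism of $\sum_i E_i$: for $v \in \sum_i E_i$, $\mop{sum}(\mop{proj}(v)) = \sum_i \pi_i v = P v$. In the nondegenerate case $\mop{proj}$ is surjective, so $\mop{sum}$ is injective; moreover $\sum_i E_i$ and $\prod_i E_i$ both have dimension $\sum_i \dim E_i$, hence $\mop{proj}_{| \sum_i E_i}$ is a bijection between them, its adjoint $\mop{sum}$ is a bijection as well, and $P$ is invertible (in particular positive definite, since $\langle P v, v\rangle = \sum_i \|\pi_i v\|^2$).

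Finally, setting $\phi \eqdef \mop{proj}_{|\sum_i E_i}$, the decomposition $P = \phi^* \phi$ gives $\det P = |\det \phi^*| \cdot |\det \phi| = |\det \phi|^2$, since the modulus of the determinant of a linear map between Hermitian spaces of equal dimension is preserved under adjunction. By definition of the determinant of the family, $|\det \phi| = \det(E_1,\dotsc,E_r)$, which yields the claim. The only point deserving care is the dimension count in the nondegenerate case, needed to make sense of $|\det \phi|$ and of $\det P$ as a determinant on $\sum_i E_i$; once this is granted, the remaining computation is formal.
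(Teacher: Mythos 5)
Your argument is correct and follows the same route as the paper: identify $\mop{sum}$ as the Hermitian adjoint of $\mop{proj}$ restricted to $\sum_i E_i$, write $P = \mop{sum}\circ\mop{proj}_{|\sum_i E_i}$, and conclude via $\det P = |\det \phi|^2$. You simply spell out the dimension count and the positivity of $P$ that the paper leaves implicit.
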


\begin{proof}
  Since $P = \mop{sum}(E_1,\dotsc,E_r) \cdot
  \mop{proj}(E_1,\dotsc,E_r)_{|E_1+\dotsb+E_r}$
  and since the map $\mop{sum}(E_1,\dotsc,E_r)$ is the Hermitian transpose
  of~$\mop{proj}(E_1,\dotsc,E_r)_{|E_1+\dotsb+E_r}$,
  it follows the definition of~$\det(E_1,\dotsc,E_r)$ that
  $\det P = \det(E_1,\dotsc,E_r)^2$.
\end{proof}

We can check similarly that $\kappa(E_1,\dotsc,E_r)^2 = \tn{P^{-1}}$, giving
another interpretation of~$\kappa$ (which will not be used here).

\begin{lemma}
  \label{lem:multiangle}
  For any subspaces~$E_1,\dotsc,E_r \subseteq V$,
  \[ \det^\perp(E_{1}, \dotsc, E_r) = \det^\perp(E_1, E_2)\det^\perp(E_1 \cap
    E_2, E_{3},\dotsc, E_r). \]
\end{lemma}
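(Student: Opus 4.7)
The plan is to reformulate the claim via the substitution $F_i \eqdef E_i^\perp$. Using the elementary identity $(E_1 \cap E_2)^\perp = E_1^\perp + E_2^\perp$ together with the definition $\det^\perp = \det(\text{orthogonal complements})$, the lemma becomes the equivalent statement
\[ \det(F_1, F_2, F_3, \dotsc, F_r) = \det(F_1, F_2)\cdot\det(F_1 + F_2, F_3, \dotsc, F_r). \]
This has the flavor of an associativity identity for multiprojections: grouping the first two subspaces into $F_1+F_2$ and projecting should produce a correction factor equal to the pairwise determinant $\det(F_1,F_2)$.

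To prove this, I would factor the multiprojection map $\mop{proj}(F_1, \dotsc, F_r)$, restricted to $F_1 + \dotsb + F_r$, as a composition $B \circ A$, where
\[ A \eqdef \mop{proj}(F_1 + F_2, F_3, \dotsc, F_r) : \sum_i F_i \longrightarrow (F_1+F_2) \times F_3 \times \dotsb \times F_r, \]
and $B : (F_1+F_2) \times F_3 \times \dotsb \times F_r \to F_1 \times \dotsb \times F_r$ is defined by $(w, v_3, \dotsc, v_r) \mapsto (\pi_{F_1} w, \pi_{F_2} w, v_3, \dotsc, v_r)$, with $\pi_{F_i}$ the orthogonal projector onto $F_i$. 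The inclusions $F_1, F_2 \subseteq F_1+F_2$ give $\pi_{F_i}\pi_{F_1+F_2} = \pi_{F_i}$ for $i=1,2$, from which $B \circ A = \mop{proj}(F_1, \dotsc, F_r)$ on $\sum_i F_i$. By construction $|\det A| = \det(F_1+F_2, F_3, \dotsc, F_r)$, and $B$ is block-diagonal with the identity on the last $r-2$ factors and the multiprojection $\mop{proj}(F_1, F_2)$ on the first block, so $|\det B| = \det(F_1, F_2)$. Multiplicativity of the determinant on equidimensional spaces then yields the claim.

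The only mildly subtle point is the treatment of degenerate cases, where some determinant may be zero and $\det(B\circ A)=\det B\det A$ would need the source/target dimensions to match. A short dimension count settles it: by subadditivity,
\[ \dim\Big(\sum_i F_i\Big) \leq \dim(F_1+F_2) + \sum_{i\geq 3}\dim F_i \leq \sum_i \dim F_i, \]
so the left-hand multiprojection is nondegenerate iff both of the multiprojections on the right are nondegenerate, and otherwise both sides of the claimed identity are zero. I expect this bookkeeping to be the only non-trivial step; once past it, the factorization $B\circ A$ and multiplicativity of $\det$ do the rest.
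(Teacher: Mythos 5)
Your proof is correct and follows essentially the same route as the paper: after passing to orthogonal complements $F_i = E_i^\perp$, you factor the multiprojection $\mop{proj}(F_1,\dotsc,F_r)_{|\sum_i F_i}$ as $\bigl(\mop{proj}(F_1,F_2)_{|F_1+F_2}\times\Id_{F_3}\times\dotsb\times\Id_{F_r}\bigr)\circ\mop{proj}(F_1+F_2,F_3,\dotsc,F_r)_{|\sum_i F_i}$, which is exactly the paper's factorization. The only difference is that you spell out the verification of the factorization and the dimension bookkeeping for the degenerate case, which the paper leaves implicit.
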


\begin{proof}
  This follows from the factorization
  \begin{multline*}
    \mop{proj}(E_1^\perp,\dotsc,E_r^\perp)_{\sum_i E_i^\perp} = \left(
      \mop{proj}(E_1^\perp, E_2^\perp)_{|E_1^\perp + E_2^\perp} \times
      \Id_{E_3^\perp} \times \dotsb \times \Id_ {E_r^\perp} \right)
    \circ \\
    \mop{proj}(E_1^\perp + E_2^\perp, E_3^\perp, \dotsc,E_r^\perp)_{| \sum_i E_i^\perp
    }. \qedhere
  \end{multline*}
\end{proof}

\subsection{Reminders on Riemannian geometry}
\label{sec:remind-riem-geom}

We will work mainly with two Riemannian manifolds: $\Proj$, the $n$-dimensional
complex projective space endowed with the Fubini--Study metric, and~$U(n+1)$,
the group of $(n+1)\times (n+1)$ unitary matrices. Concerning the latter, we
endow~$\bC^{(n+1) \times (n+1)}$ with the norm
\begin{equation}\label{eq:21}
\|A\|_u \eqdef \tfrac{1}{\sqrt{2}} \|A\|_\Frob \eqdef \sqrt{ \tfrac12 \mop{Tr}( A A^* ) }, \quad A\in \bC^{(n+1)\times (n+1)},
\end{equation}
where $A^*$ denote the conjugate transpose,
and we choose on~$U(n+1)$ the Riemannian metric induced from the embedding
of~$U(n+1)$ in~$\bC^{(n+1) \times (n+1)}$. This metric is invariant under left
and right multiplication in~$U(n+1)$.

Let~$X$ and~$Y$ be Riemannian manifolds and let~$f : X\to Y$ be
an infinitely differentiable surjective map.
For any~$x\in X$, we define the \emph{normal Jacobian} of~$f$ at~$x$ as
\begin{equation*}
  \mop{NJ}_x f \eqdef \sqrt{ \det\left( \ud_xf \cdot \ud_xf^* \right) }.
\end{equation*}
When~$\ud_x f$ is bijective, this is the absolute value of the usual Jacobian.
A fundamental result is the \emph{coarea formula} for Riemannian manifolds
\parencites[Theorem~3.1]{Federer_1959}[Appendix]{Howard_1993}: for any integrable
map~$\Theta : X \to \bR$,
\begin{equation}\label{eq:coarea}
  \smashint[\ud x]X\ \Theta(x) {\mop{NJ}_x f} = \smashint[\ud y]Y \smashintlong[\ud x]{f^{-1}(y)} {\Theta(x)}. 
\end{equation}
The special case of Riemannian submersions is important. We say that~$f$ is a
\emph{Riemannian submersion} if for any~$x\in X$, the derivative $\ud_x f$
induces an isometry from~$\left( \ker \ud_xf \right)^\perp$ to~$T_{f(x)} Y$. In
that case, we easily check that $f$ is Lipschitz-continuous with
constant~$1$ and that $\mop{NJ}_xf = 1$ for all~$x\in X$.
Note also that for any submanifold~$Z$ of~$Y$, if~$f$ is a Riemannian submersion
then so is~$f_{|f^{-1}(Z)}$.
The scaling in the definition of~$\|-\|_u$ is chosen to have the following result.

\begin{lemma}\label{lem:riem-submersion-proj}
  For any~$p\in \Proj$, the map~$\phi : u\in U(n+1)\mapsto up\in \Proj$
  is a Riemannian submersion.
  In particular, for any variety~$X\subseteq \Proj$,
  and any integrable map~$\Theta : \phi^{-1} X \to \bR$,
  \[ \smashint[\ud u]{\phi^{-1} X} \ \Theta(u) = \smashint[\ud x]{X}
    \smashint[\ud u]{up = x}\ \Theta(u), \]
  where $\int_{up=x} \ud u$ denotes the integration over the variety $\phi^{-1}(x)$.
\end{lemma}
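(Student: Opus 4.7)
The plan is to first reduce the submersion claim to a check at a single point, then compute the kernel of the derivative explicitly and verify that the induced map on its orthogonal complement is an isometry, and finally invoke the coarea formula.

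For the reduction, I use the double equivariance of $\phi$: for any $u_0\in U(n+1)$, the relation $\phi(u_0 v) = u_0\phi(v)$ holds, where left multiplication by $u_0$ is an isometry of $U(n+1)$ (by the very definition~\eqref{eq:21} of $\|-\|_u$, which is bi-invariant) and the action of $u_0$ on~$\Proj$ is an isometry of the Fubini--Study metric. Therefore it is enough to verify the Riemannian submersion property at~$u=\Id$. Choose a unit representative~$\bar p\in \bC^{n+1}$ of~$p$, and complete it into an orthonormal basis~$\bar p,e_1,\dotsc,e_n$. Identify~$T_p\Proj$ with the orthogonal complement $\bar p^\perp$ endowed with the standard Hermitian norm; this is consistent with the distance formula given in \S\ref{sec:notat-basic-defin}.

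At~$u=\Id$, the tangent space $T_{\Id}U(n+1)$ is the space of skew-Hermitian matrices, and $\ud_{\Id}\phi(A)$ is the component of $A\bar p$ orthogonal to~$\bar p$. Writing~$A$ in blocks with respect to the decomposition $\bC\bar p\oplus \bar p^\perp$,
\[ A = \begin{pmatrix} i\lambda & -v^* \\ v & B\end{pmatrix},\quad \lambda\in\bR,\ v\in \bC^n,\ B^* = -B, \]
the derivative is $\ud_{\Id}\phi(A)=v$. Its kernel is therefore $\{A : v=0\}$, whose orthogonal complement (for the Frobenius inner product) is $\{A : \lambda=0\text{ and }B=0\}$. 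On this complement, one has $\|A\|_\Frob^2 = 2\|v\|^2$, so $\|A\|_u^2 = \|v\|^2 = \|\ud_{\Id}\phi(A)\|^2$. The scaling factor $1/\sqrt{2}$ in the definition of $\|-\|_u$ is precisely what makes this an isometry; this is the only delicate point. Hence $\ud_{\Id}\phi$ restricts to an isometry from $(\ker\ud_{\Id}\phi)^\perp$ onto~$T_p\Proj$, and $\phi$ is a Riemannian submersion.

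For the integration formula, note that $\phi$ surjects onto~$\Proj$ (since $U(n+1)$ acts transitively on~$\Proj$), so $\phi^{-1}(X)\to X$ is itself a surjective Riemannian submersion by the remark preceding the lemma. Its normal Jacobian is therefore identically equal to~$1$, and the coarea formula~\eqref{eq:coarea} applied to $\phi_{|\phi^{-1}(X)}$ yields
\[ \smashint[\ud u]{\phi^{-1}X} \Theta(u) = \smashint[\ud x]{X}\ \smashint[\ud u]{\phi^{-1}(x)}\Theta(u), \]
which is the claimed identity.
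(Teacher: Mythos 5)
Your proof is correct and takes essentially the same route as the paper: reduce to the identity via bi-invariance and equivariance of $\phi$, identify $(\ker\ud_{\Id}\phi)^\perp$ explicitly in block form, observe that the $1/\sqrt 2$ scaling in $\|-\|_u$ makes the restricted derivative an isometry, and conclude the integral identity from the coarea formula. (Two tiny remarks: you correctly invoke \emph{left} multiplication for the reduction, since $\phi(u_0v)=u_0\phi(v)$, whereas the paper writes ``right multiplication''; and your block matrix $\begin{psmallmatrix}0&-v^*\\ v&0\end{psmallmatrix}$ carries the sign required for skew-Hermitianness, which the paper's $\begin{psmallmatrix}0&v^*\\ v&0\end{psmallmatrix}$ omits — neither affects the norm computation.)
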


\begin{proof}
  Thanks to the invariance of the Riemannian metric of~$U(n+1)$ under right
  multiplication, it is enough to check that the defining property of Riemannian
  submersion holds at~$\Id$, the identity matrix. With a suitable choice of coordinates, we may also
  assume that~$p = [1:0:\dotsb:0]$. The tangent space of~$\Proj$ at~$p$ is
  canonically identified with~$\left\{ p \right\}^\perp$, that is $\left\{ 0
  \right\}\times \bC^n$.

  The tangent space~$T_{\Id} U(n+1)$ of~$U(n+1)$ at~$\Id$ is the space
  of skew-Hermitian matrices, and for any~$u \in T_{\Id} U(n+1)$,
  $\ud_{\Id} \phi(\dot u) = \dot u p$.
  Therefore,
  \[ \left( \ker \ud_{\Id} \phi \right)^\perp = \left\{
      \begin{psmallmatrix}
        0 & v^* \\
        v & \mathbf{0}
      \end{psmallmatrix} \st v\in \bC^n
    \right\}, \]
  and since~$\left\|  \begin{psmallmatrix}
        0 & v^* \\
        v & \mathbf{0}
      \end{psmallmatrix} \right\|_u = \|v\|$, the map~$\dot u \in \left( \ker \ud_{\Id} \phi \right)^\perp \to \dot u p$ is
  clearly an isometry.

  The second claim follows from coarea formula~\eqref{eq:coarea}, noting that
  the restriction of~$\pi$ to~$\pi^{-1} X$ is again a Riemannian submersion.
\end{proof}

% Let~$M$ be a Riemannian manifold on which~$U(n+1)$ acts by Riemannian
% submersions, that is, for any~$x\in M$, the map $u\mapsto u\cdot x$ is a
% Riemannian submersion. For any~$x \in M$, let~$G_x$ be the stabilizer of~$x$
% in~$U(n+1)$. It is a closed subgroup and inherit a Riemannian structure.
% We define the integration 
% \[ \smashint[u\cdot x=y]{\ud u} \Theta(u) \eqdef \smashint[G_x]{\ud u} \Theta(v
%   u), \]
% where~$v\in U(n+1)$ is any element that maps~$x$ to~$y$. The definition is
% independent from the choice of~$x$. By the coarea formula, we have
% \begin{lemma}
%   For any~$x\in M$,
%   \[ \smashint[\ud u]{U(n+1)}\ \Theta(u) = \smashint[\ud y]{M} \smashint[\ud u]{u\cdot x = y}\ \Theta(u). \]
% \end{lemma}

\subsection{Basic integral formulas}

For our problem, the manifold~$\cV$ has a natural distribution, the \emph{standard distribution},
denoted~$\rho_\text{std}$, defined as follows. Let~$\bfu \in \cU$ and~$x\in
\cap\bfu\cX$ be uniformly distributed, 
so that the random variable~$(\bfu, x)$ belongs
to~$\cV$ and let~$\rstd$ be its probability distribution.
The uniform distribution is defined on~$\cU$ by the Riemannian
metric; and on~$\cap \bfu \cX$ by the Riemannian metric on the regular locus.
This section aims at describing the conditional probability distribution
of~$(\bfu, x)$, given~$x$ and the linearization~$L(\bfu, x)\in\cL$.

For any~$x\in\Proj$, $\bfy \in \cX$ and~$\bfh \in \cL$, we define
\begin{align*}
  \cL_x &\eqdef \left\{ \bfh \in \cL \st x \in \cap \bfh \right\}, \\
  \cU_x &\eqdef \left\{ \bfu \in \cU \st x \in \cap \bfu \cX \right\} \\
  \cU_{x,\bfy} &\eqdef \left\{ \bfu\in\cU_x \st \bfu \bfy = (x, \dotsc, x)
  \right\}, \\ %\text{and }
  \cU_{x,\bfy,\bfh} &\eqdef \left\{ \bfu \in \cU_{x,\bfy}
                      \st L(\bfu,x) = \bfh \right\}.
\end{align*}

\begin{lemma}
  \label{lemma:basic-integral-formula}
  For any two submanifolds~$X$ and~$Y$ of\/~$\Proj$, with~$\codim X + \codim Y \leq n$, and for any integrable function~$\Theta :  U(n+1)\times\Proj \to \bR$,
  \[ \smashintlong[\ud u]{U(n+1)} \smashint[\ud z]{X\cap u Y} \ \Theta(u, z)  =
    \smashint[\ud x]X \smashint[\ud y]Y \smashint[\ud u]{uy=x} \ \Theta(u, x)
    \det\nolimits^\perp(\mathbb{T}_xX,\mathbb{T}_x uY), \]
  (where $\int_{uy=x} \ud u$ denotes the integration over the variety of
  all~$u\in U(n+1)$ such that~$uy=x$, as in Lemma~\ref{lem:riem-submersion-proj}).
\end{lemma}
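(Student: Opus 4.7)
The plan is to express both sides of the identity as integrals over the incidence manifold
\[ M \eqdef \{(u, y) \in U(n+1) \times Y : uy \in X\}, \]
equipped with its induced Riemannian metric from $U(n+1) \times Y$, and then match the two resulting expressions via a single pointwise Jacobian identity at each $(u,y) \in M$.

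For the left-hand side, I first use that $y \mapsto uy$ is an isometry $Y \to uY$ for fixed $u$, which turns $\int_{X \cap uY}\Theta(u,z)\,dz$ into $\int_{\{y \in Y : uy \in X\}}\Theta(u, uy)\,dy$. Applying the coarea formula~\eqref{eq:coarea} to the projection $\pi_U : M \to U(n+1)$, $(u,y) \mapsto u$, whose fiber over $u$ is canonically $\{y \in Y : uy \in X\}$, then rewrites the left-hand side as $\int_M \Theta(u, uy)\,\mop{NJ}_{(u,y)}(\pi_U)\,d\mu_M$.

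For the right-hand side, I would apply coarea twice: first to $\alpha : M \to X$, $(u, y) \mapsto uy$, which converts the integration over $X$ into an integral over $M$ weighted by $\mop{NJ}(\alpha)$; and second, on each fiber $\alpha^{-1}(x) = \{(u,y) \in M : uy = x\}$, to the projection $q : (u,y) \mapsto y$ whose fibers are precisely the sets $\{u : uy = x\}$ from Lemma~\ref{lem:riem-submersion-proj}. Combining these two applications, the right-hand side becomes
\[ \int_M \Theta(u, uy)\, \det\nolimits^\perp(\mathbb{T}_{uy} X, \mathbb{T}_{uy} uY)\, \mop{NJ}_{(u,y)}(\alpha)\, \mop{NJ}(q)\, d\mu_M. \]

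Matching the two integrals for arbitrary $\Theta$ reduces the lemma to the pointwise Jacobian identity
\[ \mop{NJ}_{(u,y)}(\pi_U) = \det\nolimits^\perp(\mathbb{T}_x X, \mathbb{T}_x uY)\, \mop{NJ}_{(u,y)}(\alpha)\, \mop{NJ}(q), \qquad x = uy, \]
and this is the main technical obstacle. Its verification is a linear-algebra computation on $T_{(u,y)}M$, cut out in $T_uU(n+1) \oplus T_yY$ by the condition that $\dot u\,y + u\,\dot y$ lies in $\mathbb{T}_x X$. Decomposing $T_uU(n+1)$ into the horizontal space for $u \mapsto uy$ (isometric to $T_x\bP^n$ by Lemma~\ref{lem:riem-submersion-proj}) and its vertical stabilizer complement, and using that $u : T_yY \to \mathbb{T}_x uY$ is an isometric embedding, each $\mop{NJ}$ becomes the Gramian determinant of a block operator on $T_x\bP^n$ built from orthogonal projections onto $\mathbb{T}_xX$, $\mathbb{T}_xuY$, and their complements. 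Lemma~\ref{lem:multiproj-gram} then identifies the surviving geometric factor on the right-hand side as $\det^\perp(\mathbb{T}_xX, \mathbb{T}_xuY)^2$, while the stabilizer directions contribute equally to the three Jacobians and cancel; taking square roots yields the identity.
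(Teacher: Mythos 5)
Your strategy is sound and takes a genuinely different route from the paper. The paper lifts everything to the group: writing $\phi\colon u\mapsto up$ and setting $M=\phi^{-1}X$, $N=\phi^{-1}Y$ inside $U(n+1)$, it applies Howard's basic integral formula as a black box to $M$ and $N$, then uses Lemma~\ref{lem:riem-submersion-proj} to strip off the $\mop{vol}(U(1)\times U(n))$ factors on both sides and descend to $\bP^n$. You instead work with a single incidence manifold in $U(n+1)\times Y$ and apply the coarea formula to its three natural fibrations, avoiding both the lift and the external citation, and reducing the lemma to an explicit pointwise Jacobian identity. In effect you are re-deriving Howard's formula in this special case: that identity is exactly what his theorem encapsulates.

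The caveat is that the identity $\mop{NJ}(\pi_U)=\det^\perp(\mathbb{T}_xX,\mathbb{T}_xuY)\,\mop{NJ}(\alpha)\,\mop{NJ}(q)$ is the whole mathematical content of the lemma, and you have only sketched its verification. The sketch is plausible in spirit, but the three Gram determinants live on $T_{(u,y)}M\subset T_uU(n+1)\oplus T_yY$, whose induced metric couples the two factors nontrivially: for example $\ker\ud\alpha$ is spanned by the stabilizer directions together with anti-diagonal vectors $(-\dot y,\dot y)$ of norm $\sqrt2\,\|\dot y\|$, so the singular values of $\ud q$ on $(\ker\ud q)^\perp$ are all $1/\sqrt2$, and such powers of $2$ must recombine correctly with $\mop{NJ}(\alpha)$ and $\det^\perp$. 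The remark that the stabilizer directions ``contribute equally and cancel'' also blurs the fact that they lie in the \emph{horizontal} space of $\pi_U$ but in the \emph{kernels} of $\ud\alpha$ and $\ud q$, so they wash out of each Jacobian for a different reason. None of this is a fatal obstruction, but this bookkeeping is precisely what makes the lemma nontrivial; to finish one must carry the linear algebra to the end, which the paper sidesteps by invoking Howard.
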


\begin{proof}
  It is  a corollary of the “basic integral formula” of \textcite[\S2.7]{Howard_1993}.
  %We consider~$\Proj$ a the quotient of~$U$ by the subgroup~$G \eqdef U(1)\times U(n)$ \emph{via} the map
  Let~$p \in \Proj$ be some point and let
  $\phi$ be the Riemannian submersion $u \in U(n+1) \mapsto u p \in \Proj$
  (Lemma~\ref{lem:riem-submersion-proj}).
  Let~$M = \phi^{-1}X$ and~$N=\phi^{-1}Y$.
  Howard's basic integral formula asserts that
  \begin{equation}\label{eq:8}
   \smashintlong[\ud u]{U(n+1)} \smashint[\ud v]{M \cap u N}\ \Theta(u, \phi v)
    = \smashint[\ud v]M\smashint[\ud w]N\ \Theta(v w^{-1}, \phi v)
    \det^\perp(v^{-1} T_v M, w^{-1} T_w N),
  \end{equation}
  where $v^{-1} T_v M$ and $w^{-1} T_w N$ are subspaces of~$T_{\Id} U(n+1)$.
  (The equality of our~$\det^\perp$ with Howard's $\sigma$ is given by Lemma~\ref{lem:multiproj-gram}.)
  On the one hand, by Lemma~\ref{lem:riem-submersion-proj}, we obtain the
  following expression for the left-hand side of~\eqref{eq:8}:
  \begin{align*}
    \smashintlong[\ud u]{U(n+1)} \smashint[\ud v]{M \cap u N}\ \Theta( u,\phi v) &= \smashintlong[\ud u]{U(n+1)}\smashintlong[\ud z]{X \cap uY}\smashint[\ud v]{vp=z}\ \Theta(u,z)  \\
    &= \smashintlong[\ud u]{U(n+1)}\smashint[\ud z]{X \cap uY}\  \Theta(u,z) \mop{vol} \left\{ v\in U(n+1) \st vp=z \right\}  \\
    &=\mop{vol}(U(1)\times U(n)) \smashintlong[\ud u]{U(n+1)}\smashint[\ud z]{X \cap uY}\ \Theta(u,z),
  \end{align*}
  where, for the last equality, we remark that~$\left\{ v\in U(n+1) \st vp=z \right\}$ is isometric, by some
  left multiplication, to $U(1)\times U(n)$,
  the stabilizer of a point in~$\Proj$. This is the left-hand side of the claimed
  equality. On the other hand, regarding the right-hand side of~\eqref{eq:8},
  we check easily that
  \begin{align*}
    \det^\perp(v^{-1} {T}_v M, w^{-1} {T}_w N) &= \det^\perp ( v^{-1} \mathbb{T}_{vp} X, w^{-1} \mathbb{T}_{wp} Y )\\
                                               &= \det^\perp\left( \mathbb{T}_{vp} X, \mathbb{T}_{vp}(v w^{-1} Y) \right)
  \end{align*}
  and therefore (using Lemma~\ref{lem:riem-submersion-proj} again) 
  \begin{align*}
    \smashint[\ud v]M\smashint[\ud w]N\ &\Theta(v w^{-1}, \phi v) \det^\perp(v^{-1} T_v M, w^{-1} {T}_w N)\\
    &= \smashint[\ud x]X \smashintlong[\ud v]{vp=x}\smashint[\ud y]Y\smashint[\ud w]{wp=y}\ \Theta(v w^{-1}, x) \det^\perp(\mathbb{T}_x X, \mathbb{T}_x vw^{-1} Y)\\
    &= \mop{vol}(U(1)\times U(n)) \smashint[\ud x]X \smashint[\ud y]Y \smashint[\ud u]{uy=x}\ \Theta(u, x) \det^\perp(\mathbb{T}_x X,\mathbb{T}_x uY),
  \end{align*}
  where the last equality is given by the change of variables $v = u w$. This
  gives the right-hand side of the claim.
\end{proof}

In our setting where we consider $r$ varieties $X_1, \dotsc, X_r$, we
can give the following ``basic integral formula''. It has been proved very
similarly in the real case by \textcite[\S7.5]{BurgisserLerario_2018}. 

\begin{proposition}
  \label{prop:multi-basic-integral-formula}
  For any measurable function  $\Theta : \cV \to [0,\infty)$
  \[ \smashint[\ud \bfu]{\cU}\smashint[\ud x]{\cap\bfu\cX}\  \Theta(\bfu, x)
  = \smashint[\ud x]{\Proj} \smashint[\ud \bfu]{\cU_x}\  \Theta(\bfu, x) \det^\perp(L(\bfu, x)). \]
\end{proposition}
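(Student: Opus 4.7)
The plan is to proceed by induction on the number of varieties $r$.

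\emph{Base case $r=1$:} the multiprojection onto a single subspace, restricted to that subspace, is the identity, so $\det^\perp$ of a single subspace equals~$1$, and the claim reduces to the Fubini-style swap
\[ \int_{U(n+1)}\int_{uX_1}\Theta(u,x)\,\ud x\,\ud u = \int_{\Proj}\int_{\cU_x}\Theta(u,x)\,\ud u\,\ud x. \]
This is obtained from Lemma~\ref{lem:riem-submersion-proj} applied to the Riemannian submersion $u\mapsto u^{-1}x$ (to rewrite $\int_{\{u:\,x\in uX_1\}}$ as $\int_{X_1}\int_{uy_1=x}$) and, in the reverse direction, to $u\mapsto uy_1$ for fixed $y_1\in X_1$ (reassembling the result as $\int_{U(n+1)}\Theta(u,uy_1)\,\ud u = \int_{U(n+1)}\int_{uX_1}\Theta(u,x)\,\ud x\,\ud u$).

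\emph{Inductive step $r\geq 2$:} split $\bfu = (\bfv, u_r)$ with $\bfv = (u_1,\dotsc,u_{r-1})$, and set $Y_\bfv \eqdef \cap_{i<r}\, u_iX_i$. Iterated Kleiman transversality shows that for almost every $\bfv\in U(n+1)^{r-1}$ the set $Y_\bfv$ is a pure-dimensional subvariety of $\Proj$ of the expected codimension, smooth at a generic point, with $\mathbb{T}_x Y_\bfv = \cap L(\bfv,x)$ there. For such~$\bfv$, Lemma~\ref{lemma:basic-integral-formula} applied to the innermost integration over $(u_r,x)$ with $X = Y_\bfv$ and $Y = X_r$ gives
\begin{multline*}
  \int_{U(n+1)}\int_{Y_\bfv\cap u_rX_r}\Theta(\bfu,x)\,\ud x\,\ud u_r \\
  = \int_{Y_\bfv}\int_{X_r}\int_{u_r y_r = x}\Theta(\bfu,x)\det\nolimits^\perp(\mathbb{T}_x Y_\bfv,\mathbb{T}_x u_r X_r)\,\ud u_r\,\ud y_r\,\ud x,
\end{multline*}
and iterating Lemma~\ref{lem:multiangle} $r-2$ times rewrites the determinantal weight as the ratio $\det^\perp(L(\bfu,x))/\det^\perp(L(\bfv,x))$.

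Integrating over $\bfv$ and exchanging orders by Tonelli, the left-hand side takes the shape $\int_{U(n+1)^{r-1}}\int_{Y_\bfv}\Phi(\bfv,x)\,\ud x\,\ud\bfv$ for the obvious nonnegative bracketed integrand $\Phi$. This is precisely the left-hand side of the proposition applied to the $r-1$ varieties $X_1,\dotsc,X_{r-1}$, so the inductive hypothesis converts it into $\int_\Proj \int_{\{\bfv:\,x\in Y_\bfv\}}\Phi(\bfv,x)\det^\perp(L(\bfv,x))\,\ud\bfv\,\ud x$, in which $\det^\perp(L(\bfv,x))$ cancels the denominator inside $\Phi$ and leaves the weight $\det^\perp(L(\bfu,x))$. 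A final application of Lemma~\ref{lem:riem-submersion-proj} to the submersion $u_r\mapsto u_r^{-1}x$ collapses $\int_{X_r}\int_{u_r y_r = x}\ud u_r\,\ud y_r$ into $\int_{\{u_r:\,x\in u_r X_r\}}\ud u_r$, which combined with the $\bfv$-integration over $\{\bfv:\,x\in Y_\bfv\}$ yields $\int_{\cU_x}\Theta(\bfu,x)\det^\perp(L(\bfu,x))\,\ud\bfu$, the desired right-hand side. The one delicate point is the transversality argument ensuring almost-everywhere smoothness of~$Y_\bfv$ before Lemma~\ref{lemma:basic-integral-formula} may be invoked; everything else is bookkeeping with Tonelli together with the algebraic identities of~\S\ref{sec:orth-determ-incid}.
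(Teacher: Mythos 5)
Your proof is correct and follows essentially the same route as the paper: induction on~$r$, with Lemma~\ref{lemma:basic-integral-formula} applied to split off the last factor $u_r$, the induction hypothesis applied to the remaining $\bfv$-integral, Lemma~\ref{lem:multiangle} to recombine the determinantal weights, and Lemma~\ref{lem:riem-submersion-proj} to collapse the $\int_{X_r}\int_{u_r y = x}$ fibration. The only cosmetic differences are your reformulation of the weight as a ratio $\det^\perp(L(\bfu,x))/\det^\perp(L(\bfv,x))$ (the paper instead multiplies the two factors after the inductive step) and your explicit appeal to Kleiman-type transversality to justify that $Y_\bfv=\cap\bfu'\cX'$ is a submanifold for almost every $\bfv$, a point the paper leaves implicit.
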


\begin{proof}
  We proceed by induction on~$r$.
  When $r=1$,
  \begin{align*}
    \smashint[\ud \bfu]{\cU}\smashint[\ud x]{\cap\bfu\cX}\  \Theta(\bfu, x)
    &= \smashintlong[\ud u]{U(n+1)}\smashint[\ud x]{u X_1}\  \Theta(u, x)\\
    &= \smashint[\ud x]{\Proj}\smashint[\ud y]{X_1}\smashint[\ud u]{uy=x}\ \Theta(u, x) \det^\perp(\mathbb{T}_x \Proj, \mathbb{T}_x uX_1), 
  \end{align*}
  by Lemma~\ref{lemma:basic-integral-formula} with $X = \Proj$ and $Y = X_1$.
  Note that~$\det^\perp(\mathbb{T}_x \Proj, \mathbb{T}_x uX_1) =
  \det^\perp(\mathbb{T}_x uX_1) = 1$, and it follows that
  \begin{align*}
    \smashint[\ud \bfu]{\cU}\smashint[\ud x]{\cap\bfu\cX}\  \Theta(\bfu, x)
     &=  \smashint[\ud x]{\Proj}\smashint[\ud y]{X_1}\smashint[\ud u]{uy=x}\ \Theta(u, x)\\
     &=  \smashint[\ud x]{\Proj}\smashint[\ud y]{X_1}\smashint[\ud v]{y=vx}\ \Theta(v^{-1}, x), && \text{by the change of variable $v=u^{-1}$,}\\
     &= \smashint[\ud x]{\Proj} \smashintlong[\ud v]{x\in v^{-1}X_1}\ \Theta(v^{-1}, x), &&\text{by Lemma~\ref{lem:riem-submersion-proj},}\\
     &= \smashint[\ud x]{\Proj} \smashintlong[\ud u]{\cU_x}\ \Theta(u, x), &&\text{by the change of variable $u = v^{-1}$.}
  \end{align*}
  This concludes the base case
  since~$\det^\perp(L(\bfu, x))$ is identically~$1$ when~$r=1$. 
   
  Assume that the property holds for~$r-1$ subvarieties~$X_1,\dotsc,X_{r-1}$,
  for some~$r\geq 1$,
  and let~$\cU'$, $\cX'$, \mbox{etc.} denote the analogues of~$\cU$, $\cX$,
  \mbox{etc.} for the varieties $X_1,\dotsc,X_{r-1}$.
  From the decomposition~$\cU = \cU'\times U(n+1)$, we obtain by Lemma~\ref{lemma:basic-integral-formula}
  \begin{align*}
    &\smashint[\ud \bfu]{\cU}\smashint[\ud x]{\cap\bfu\cX}\  \Theta(\bfu, x)
    = \smashint[\ud\bfu']{\cU'}  \smashintlong[\ud u_r]{U(n+1)} \smashint[\ud x]{(\cap\bfu'\cX') \cap u_r X_r}\  \Theta(\bfu', u_r, x) \\
    &= \smashint[\ud\bfu']{\cU'}\smashintlong[\ud x]{\cap\bfu'\cX'}\smashint[\ud y]{X_r} \smashint[\ud u_r]{u_r y = x} \ \Theta(\bfu', u_r, x) \det^\perp(\mathbb{T}_x (\cap \bfu'\cX'), \mathbb{T}_x u_r X_r)  \\
      &= \smashint[\ud x]{\Proj} \smashintlong[\ud \bfu']{\cU'_x} \smashint[\ud y]{X_r} \smashint[\ud u_r]{u_r y = x} \ \Theta(\bfu', u_r, x) \det^\perp(\mathbb{T}_x (\cap\bfu'\cX'), \mathbb{T}_x u_r X_r) \det^\perp(L(\bfu', x)),
  \end{align*}
  using the induction hypothesis for the last equation.
  Lemma~\ref{lem:multiangle} shows that
  \[ \det^\perp(\mathbb{T}_x (\cap\bfu'\cX'), \mathbb{T}_x u_r X_r) \det^\perp(L(\bfu', x)) = \det^\perp (L(\bfu,x)). \]
  Moreover, by Lemma~\ref{lem:riem-submersion-proj} (applied as in the base case), 
  \[ \smashint[\ud y]{X_r} \smashint[\ud u_r]{u_r y = x} \ h(u_r) =
    \smashint[\ud u_r]{x \in u_r X_r} \ h(u_r), \]
  for any integrable function~$h : U(n+1)\to \bR$.
  This implies that
  \begin{align*}
    \smashint[\ud \bfu]{\cU}\smashint[\ud x]{\cap\bfu\cX}&\  \Theta(\bfu, x)    
      = \smashint[\ud x]{\Proj} \smashintlong[\ud \bfu']{\cU'_x} \smashint[\ud u_r]{x \in u_r X_r} \ \Theta(\bfu', u_r, x) \det_x^\perp(L(\bfu,x)).
  \end{align*}
  To conclude, we remark that~$\cU_x = \cU'_x\times\left\{ u_r\in U(n+1) \st x\in u_r X_r \right\}$.
\end{proof}

If we apply the statement above to the case where the varieties~$X_i$ are
projective subspaces, that is~$X_i \in \mathbb{G}(\dim X_i)$, we obtain the following corollary.

\begin{corollary}\label{coro:multi-lin}
  For any  measurable function  $\Theta : \left\{(\bfh, x)\in\cL \st x \in \cap \bfh\right\} \to [0,\infty)$
  \[ \smashint[\ud \bfh]{\cL}\smashint[\ud x]{\cap\bfh}\  \Theta(\bfh,x)
  = \smashint[\ud x]{\Proj} \smashint[\ud \bfh]{\cL_x}\  \Theta(\bfh, x) \det^\perp(\bfh). \]
\end{corollary}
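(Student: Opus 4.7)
The plan is to derive the corollary by specializing Proposition~\ref{prop:multi-basic-integral-formula} to the case where each $X_i$ is a linear projective subspace of dimension $k_i$, and then pushing both integrals through the natural map $\psi : \cU \to \cL$ defined by $\bfu \mapsto (u_1 X_1, \dotsc, u_r X_r)$. In this linear setting, $u_i X_i$ is again a projective subspace of dimension $k_i$, and the projectivized tangent space of a linear subspace at any one of its points is the subspace itself, so $L(\bfu, x) = \psi(\bfu)$ whenever $(\bfu, x) \in \cV$. Consequently both $\cap\bfu\cX = \cap\psi(\bfu)$ and $L(\bfu,x) = \psi(\bfu)$, so the integrand on either side of Proposition~\ref{prop:multi-basic-integral-formula} depends on $\bfu$ only through $\psi(\bfu)$.

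The key auxiliary fact to establish is that $\psi$ is a Riemannian submersion with constant fiber volume. Each coordinate map $\psi_i : u_i \in U(n+1) \mapsto u_i X_i \in \mathbb{G}(k_i)$ is a Riemannian submersion with respect to the $U(n+1)$-invariant quotient metric on $\mathbb{G}(k_i) \cong U(n+1)/(U(k_i+1)\times U(n-k_i))$, by the same local computation as in Lemma~\ref{lem:riem-submersion-proj} (which is the case $k_i = 0$). Taking the product, $\psi : \cU \to \cL$ is a Riemannian submersion. Because $\psi$ is equivariant for the transitive left $\cU$-action on $\cL$, every fiber has the same volume $V$, equal to the volume of $\prod_i (U(k_i+1) \times U(n-k_i))$. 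From the definitions one checks $\psi^{-1}(\cL_x) = \cU_x$ for every $x \in \Proj$, so the restriction $\psi : \cU_x \to \cL_x$ is also a Riemannian submersion with the same constant fiber volume $V$.

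With these ingredients, the corollary falls out of Proposition~\ref{prop:multi-basic-integral-formula} via the coarea formula~\eqref{eq:coarea}: integrating the pulled-back integrand over each fiber of $\psi$ (respectively over each fiber of $\psi : \cU_x \to \cL_x$) multiplies the left-hand side and the right-hand side of the proposition by the same constant $V$, which then cancels to yield the desired identity. The only step requiring genuine verification is the Riemannian submersion property of $\psi_i$ with constant fiber volume; this should be routine given the analogous statement in Lemma~\ref{lem:riem-submersion-proj} and the homogeneous description of $\mathbb{G}(k_i)$, and I do not expect it to pose a serious obstacle.
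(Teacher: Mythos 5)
Your proposal is correct and follows essentially the same route as the paper's proof: both specialize Proposition~\ref{prop:multi-basic-integral-formula} to the case of linear projective subspaces, observe that $L(\bfu,x)=\bfu\cX$ so the integrand factors through the map $\bfu\mapsto\bfu\cX$, and then use that this map is a Riemannian submersion with isometric (constant-volume) fibers to push both sides down to $\cL$ via coarea and cancel the common stabilizer volume. The only difference is one of exposition: you spell out the Riemannian-submersion verification and the identification of the fiber with $\prod_i\bigl(U(k_i+1)\times U(n-k_i)\bigr)$, while the paper simply asserts the submersion property and denotes the fiber volume $\mop{vol}(\mop{Stab}_\cU\cX)$.
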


\begin{proof}
  Let us assume that each~$X_i$ is a projective subspace of~$\Proj$. The
  map
  \[ \bfu \in \cU \mapsto \bfu \cX = (u_1 X_1,\dotsc,u_r X_r) \in \cL, \]
  is a Riemannian submersion,
  thus
  \begin{align*}
    \smashint[\ud \bfu]{\cU}\smashint[\ud x]{\cap\bfu\cX}\  \Theta(\bfu\cX, x)
    &= \smashint[\ud \bfh]{\cL}\smashintlong[\ud \bfu]{\bfu \cX = \bfh}\smashint[\ud x]{\cap\bfh}\  \Theta(\bfh, x)\\
    &= \mop{vol}(\mop{Stab}_\cU \cX)\smashint[\ud \bfh]{\cL} \smashint[\ud x]{\cap \bfh}\  \Theta(\bfh, x),
  \end{align*}
  where $\mop{Stab}_\cU \cX = \left\{ \bfu \in \cU \st \bfu\cX = \cX \right\}$,
  because all the subsets~$\left\{ \bfu \in \cU \st \bfu\cX = \bfh \right\}$ are
  isometric, by some left multiplication, to~$\mop{Stab}_\cU \cX$.
  Similarly,
  \begin{equation*}
    \smashint[\ud x]{\Proj} \smashint[\ud \bfu]{\cU_x}\  \Theta(\bfu \cX, x) \det^\perp(L(\bfu, x))
    = \mop{vol}(\mop{Stab}_\cU \cX) \smashint[\ud x]{\Proj} \smashint[\ud \bfh]{\cL_x}\  \Theta(\bfh, x) \det^\perp(\bfh ).
  \end{equation*}
  This reduces the claim to Proposition~\ref{prop:multi-basic-integral-formula}.
\end{proof}

We now have all we need to prove the main result of this section.

\begin{theorem}
  For any measurable function~$\Theta : \cV \to [0,\infty)$,
  \[ \smashint[\ud \bfu]{\cU}\smashint[\ud x]{\cap\bfu\cX}\  \Theta(\bfu, x)
  = \smashint[\ud {\bfh}]{\cL}\smashint[\ud\bfy]{\cX} \smashint[\ud x]{\cap \bfh} \smashint[\ud \bfu]{\cU_{x,\bfy,\bfh}}\ \Theta(\bfu, x). \]
  In other words, if~$(\bfu,x)\in\cV$ is a $\rho_\text{std}$-distributed random variable distributed, then:
  \begin{enumerate}[(i)]
  \item the random variable $L(\bfu,x)$ is uniformly distributed in~$\cL$;
  \item \label{item:8} the random variable $\bfy \eqdef (u_1^{-1} x, \dotsc, u_r^{-1} x)$ is uniformly
    distributed in~$\cX$ and independent from~$L(\bfu,x)$;
  \item conditionally on~$L(\bfu, x)$, the random variable $x$ is uniformly distributed in~$\cap L(\bfu,x)$;
  \item conditionally on~$L(\bfu,x)$, $x$ and~$\bfy$, the random variable $\bfu$ is uniformly distributed in~$\cU_{x,\bfy,L(\bfu,x)}$.
  \end{enumerate}

  %In other words, if~$(\bfu, x)\in \cV$ follows the standard distribution, then~$L(\bfu)$ is uniformly distributed in~$\cL$ and given~$L(v)$
  \label{thm:linearization}
\end{theorem}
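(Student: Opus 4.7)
The plan is to establish the integral identity directly, then read off the four distributional statements from the factorized form of the right-hand side.

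Starting from Proposition~\ref{prop:multi-basic-integral-formula}, the left-hand side rewrites as
\[ \int_{\Proj} \int_{\cU_x} \Theta(\bfu, x) \det\nolimits^\perp(L(\bfu, x))\,\ud\bfu\,\ud x. \]
The main task is, for each fixed $x$, to decompose the integral over $\cU_x$ into iterated integrals over $\bfy\in\cX$, $\bfh\in\cL_x$, and $\bfu\in\cU_{x,\bfy,\bfh}$. Since $\cU_x=\prod_i\{u\in U(n+1)\mid u^{-1}x\in X_i\}$, this can be done factor by factor. Each factor admits two successive Riemannian-submersion decompositions: first, $u\mapsto u^{-1}x$ is a Riemannian submersion from $\{u\mid u^{-1}x\in X_i\}$ onto $X_i$ (by Lemma~\ref{lem:riem-submersion-proj} combined with the isometric inversion $u\leftrightarrow u^{-1}$), whose fibers $\{u\mid uy_i=x\}$ are left cosets of $\mop{Stab}(x)\cong U(1)\times U(n)$; second, parameterizing this coset by $\mop{Stab}(x)$ via left multiplication, the map $u\mapsto \mathbb{T}_x(uX_i)$ coincides with the $\mop{Stab}(x)$-action on the Grassmannian of $\dim X_i$-subspaces of $T_x\Proj$ (with the $U(1)$-factor acting trivially), which is again a Riemannian submersion onto the $i$-th factor of $\cL_x$.

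Combining these decompositions over all $i$ gives, for any integrable $g$,
\[ \int_{\cU_x} g(\bfu)\,\ud\bfu = \int_{\cX}\int_{\cL_x}\int_{\cU_{x,\bfy,\bfh}} g(\bfu)\,\ud\bfu\,\ud\bfh\,\ud\bfy. \]
Apply this with $g(\bfu)=\Theta(\bfu,x)\det^\perp(L(\bfu,x))$; since $L(\bfu,x)=\bfh$ on $\cU_{x,\bfy,\bfh}$, the factor $\det^\perp(\bfh)$ pulls outside the innermost integral. An application of Fubini (exchanging $\ud x$ with $\ud\bfh$ and $\ud\bfy$), followed by Corollary~\ref{coro:multi-lin} read in reverse --- which converts $\int_\Proj\int_{\cL_x}f(\bfh,x)\det^\perp(\bfh)$ into $\int_\cL\int_{\cap\bfh}f(\bfh,x)$ --- yields the claimed identity.

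The four statements (i)--(iv) are then obtained by reading the identity as the assertion that the joint density of $(\bfh,\bfy,x,\bfu)=(L(\bfu,x),(u_i^{-1}x)_i,x,\bfu)$ with respect to $\ud\bfh\,\ud\bfy\,\ud x\,\ud\bfu$ on $\cL\times\cX\times(\cap\bfh)\times\cU_{x,\bfy,\bfh}$ is constant; successive marginalization gives uniformity of $\bfh$ on $\cL$, independence of $\bfy$ from $\bfh$ together with its uniformity on $\cX$, the conditional uniformity of $x$ on $\cap\bfh$ given $\bfh$, and finally the conditional uniformity of $\bfu$ on $\cU_{x,\bfy,\bfh}$. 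The main obstacle is the unfolding of $\cU_x$ in the second step: the individual Riemannian-submersion ingredients are standard consequences of $U(n+1)$-equivariance and of the normalization $\|{-}\|_u=\tfrac{1}{\sqrt2}\|{-}\|_\Frob$ that makes Lemma~\ref{lem:riem-submersion-proj} hold on the nose, but one must check that the product of fiber volumes across the $r$ factors reassembles exactly into the intrinsic volume of $\cU_{x,\bfy,\bfh}$ without stray constants.
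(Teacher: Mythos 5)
Your proposal is correct and follows essentially the same route as the paper: it relies on exactly the same ingredients --- Proposition~\ref{prop:multi-basic-integral-formula}, Corollary~\ref{coro:multi-lin}, Fubini, and the fact that $\bfu\in\cU_x\mapsto(u_i^{-1}x)_i\in\cX$ and $\bfu\in\cU_{x,\bfy}\mapsto L(\bfu,x)\in\cL_x$ are Riemannian submersions --- but you run the computation from left to right (decomposing $\cU_x$ into nested fibrations and then reassembling with Corollary~\ref{coro:multi-lin} in reverse), whereas the paper starts from the factorized right-hand side and collapses it; you are also slightly more granular, splitting the two $\cU_x$-fibrations factor-by-factor over the $r$ copies of $U(n+1)$. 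Your closing worry about stray constants is already resolved by the Riemannian-submersion property itself, since a submersion has unit normal Jacobian and the product of Riemannian submersions is one; the paper takes the same license and asserts these submersion properties without spelling out the verification.
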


\begin{proof}
  By Corollary~\ref{coro:multi-lin},
  \begin{align*}
    \smashint[\ud {\bfh}]{\cL}\smashint[\ud x]{\cap \bfh} \smashint[\ud\bfy]{\cX} \smashint[\ud v]{\cU_{x,\bfy,\bfh}}\ \Theta(\bfu, x)
  &= \smashint[\ud x]{\Proj} \smashint[\ud\bfh]{\cL_x} \det^\perp(\bfh) \smashint[\ud\bfy]{\cX} \smashint[\ud v]{\cU_{x,\bfy,\bfh}}\ \Theta(\bfu, x) \\
  &= \smashint[\ud x]{\Proj} \smashint[\ud\bfy]{\cX} \smashint[\ud\bfh]{\cL_x} \smashint[\ud v]{\cU_{x,\bfy,\bfh}}\ \Theta(\bfu, x)\det^\perp(L(\bfu,x)).
  \end{align*}
  Moreover, the map~$\bfu \in \cU_{x,\bfy} \mapsto L(\bfu,x) \in \cL_x$ is a Riemannian submersion, thus
  \[ \smashint[\ud\bfh]{\cL_x} \smashint[\ud v]{\cU_{x,\bfy,\bfh}}\ \Theta(\bfu, x)\det^\perp(L(\bfu,x)) = \smashint[\ud v]{\cU_{x,\bfy}}\ \Theta(\bfu, x)\det^\perp(L(\bfu,x)). \]
  The map~$\bfu \in \cU_{x} \mapsto (u_1^{-1} x, \dotsc, u_m^{-1} x)\in \cX$ is also a Riemannian submersion, thus
  \[ \smashint[\ud\bfy]{\cX}\smashint[\ud v]{\cU_{x,\bfy}}\ \Theta(\bfu, x)\det^\perp(L(\bfu,x)) = \smashint[\ud v]{\cU_x} \ \Theta(\bfu, x)\det^\perp(L(\bfu,x)). \]
  To conclude, we apply Proposition~\ref{prop:multi-basic-integral-formula}.
\end{proof}

\begin{corollary}\label{coro:uniform-sampling}
  Let~$X$ be subvariety of\/~$\Proj$
  and~$L \in \mathbb{G}(\codim X)$ be a uniformly distributed random projective subspace.
  Let~$\zeta \in X \cap L$ be a uniformly distributed random variable.
  Then~$\zeta$ is uniformly distributed in~$X$.
\end{corollary}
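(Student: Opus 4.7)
The plan is to deduce this from Theorem~\ref{thm:linearization} applied with $r = 2$, $X_1 = X$, and $X_2 = L_0$ for an arbitrary fixed $L_0 \in \mathbb{G}(\codim X)$. The idea is that the standard distribution on the associated rigid solution variety, after a natural change of variables, realizes exactly the joint distribution of $(L, \zeta)$ described in the statement, and the conclusion then drops out of item (ii) of the theorem.

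Concretely, let $(u_1, u_2, x)$ be $\rstd$-distributed on $\cV = \{(u_1, u_2, x) : x \in u_1 X \cap u_2 L_0\}$, and set $\tilde L \eqdef u_1^{-1} u_2 L_0$ and $\tilde \zeta \eqdef u_1^{-1} x$, which by construction satisfies $\tilde \zeta \in X \cap \tilde L$. Bi-invariance of the Haar measure on $U(n+1)$ shows that $u_1^{-1} u_2$ is uniform on $U(n+1)$, so $\tilde L$ is uniform in $\mathbb{G}(\codim X)$. Moreover, by the very definition of $\rstd$, conditionally on $(u_1, u_2)$ the point $x$ is uniform in $u_1 X \cap u_2 L_0 = u_1(X \cap u_1^{-1} u_2 L_0)$, so conditionally on $\tilde L$ the point $\tilde \zeta$ is uniform in $X \cap \tilde L$. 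Thus $(\tilde L, \tilde \zeta)$ has the same joint distribution as the pair $(L, \zeta)$ described in the corollary.

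It then remains only to compute the marginal distribution of $\tilde \zeta$, which is handled by item (ii) of Theorem~\ref{thm:linearization}: the random variable $\bfy = (u_1^{-1} x, u_2^{-1} x)$ is uniformly distributed in $\cX = X \times L_0$. Marginalizing over the second component yields that $\tilde \zeta = u_1^{-1} x$ is uniformly distributed in $X$, which gives the corollary. The only potentially delicate point is verifying that the change of variables $(u_1, u_2, x) \mapsto (u_1, \tilde L, \tilde \zeta)$ preserves the conditional independence structure, but this is immediate from bi-invariance of the Haar measure.
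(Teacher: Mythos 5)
Your proof is correct and takes essentially the same route as the paper's: both identify the distribution of $(L,\zeta)$ with that of a $\rstd$-distributed point on the solution variety built from $X_1=X$ and $X_2=L_0$ (using the invariance of Haar measure on $U(n+1)\times U(n+1)$ to split the two unitary factors), and then invoke Theorem~\ref{thm:linearization}\ref{item:8}. You argue from the $\rstd$-distributed triple towards $(L,\zeta)$, whereas the paper constructs an $\rstd$-distributed triple starting from the given $(L,\zeta)$; this is the same argument read in opposite directions, with identical mathematical content.
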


\begin{proof}
  Let~$L_0 \in \mathbb{G}(\codim X)$ and let~$v\in U(n+1)$ a be uniformly distributed random variable.
  The random subspace~$vL_0$ is uniformly distributed in~$\mathbb{G}(\codim X)$:
  indeed, the probability distribution of~$vL_0$ is invariant under the action
  of~$U(n+1)$ and, by transitivity of the action of~$U(n+1)$
  on~$\mathbb{G}(\codim X)$, there is a unique invariant probability
  distribution on~$\mathbb{G}(\codim X)$. So we may assume that~$L = v L_0$.

  Let~$u \in U(n+1)$ be an independent uniformly distributed random variable.
  The random variables~$u$ and~$v'\eqdef uv$ are independent and uniformly distributed,
  because the diffeomorphism
  \[ (u,v) \in U(n+1) \times U(n+1) \mapsto (u, u v) \in U(n+1) \times
    U(n+1) \]
  has constant Jacobian, and so preserves the uniform distribution.
  Moreover, $u\zeta$ is uniformly distributed in~$u X \cap v' L_0$.
  Therefore, the pair~$\left( (u,v'), u\zeta \right)$ is~$\rstd$-distributed in
  the solution variety associated to~$X$ and~$L_0$.
  By Theorem~\ref{thm:linearization}\ref{item:8}, $u^{-1} (u \zeta)$ is
  uniformly distributed in~$X$, which is the claim.
\end{proof}

\subsection{Sampling the solution variety}
\label{sec:sampl-solut-vari}

\begin{algo}[tp]
  \centering
  \begin{algorithmic}
    \Function{Sample}{$X_1,\dotsc,X_r$}
    \State Sample $h_1 \in \mathbb{G}(\dim X_i), \dotsc, h_r \in \mathbb{G}(\dim
    X_r)$, uniformly and independently.
    \State Sample $\zeta \in h_1 \cap \dotsb \cap h_r \subset \Proj$ uniformly.
    \State Sample $y_1 \in X_1,\dotsc,y_r\in X_r$ uniformly and independently.
    \State Sample $u_1,\dotsc,u_r \in U(n+1)$, such that~$u_i y_i= \zeta$ 
    and~$u_i(\mathbb{T}_{y_i} X_i) = h_i$,
    uniformly and independently.
    \State \textbf{return} $(u_1,\dotsc,u_r) \in \cU$ and~$\zeta \in \Proj$.
    \EndFunction
  \end{algorithmic}
  \caption[]{Sampling of a unitary solution variety
    \begin{description}
      \item[Input.] Varieties $X_1,\dotsc,X_r \subset \Proj$ with~$\sum_i \codim
        X_i \leq n$. 
      \item[Output.] $(\bfu, \zeta) \in \cV$, where~$\cV = \left\{
          (u_1,\dotsc,u_r, x)\in \cU \times \Proj \st x \in \cap_i u_i X_i \right \}$.
      \item[Postcondition.] $(\bfu, \zeta) \sim \rho_\text{std}$ (Theorem~\ref{thm:linearization})
    \end{description}
  }
  \label{algo:sample}
\end{algo}

Based on Theorem~\ref{thm:linearization}, Algorithm~\ref{algo:sample} samples a
$\rho_\text{std}$-distributed random~$(\bfu, \zeta)\in\cV$.
We explain briefly how to perform the four steps of the algorithm.

For each~$1\leq i\leq r$, we sample independently linear
forms~$\lambda_{i,1},\dotsc,\lambda_{i,\mop{codim} X_i} \in (\bC^{n+1})^*$ with a
standard normal distribution.
We define $h_i$ as the
zero locus of~$\lambda_{i,1},\dotsc,\lambda_{i,\mop{codim} X_i}$.
Next, we compute a unitary basis of the linear subspace
$h_1\cap \dotsb \cap h_n$ and use it to
sample~$\zeta \in \bP(h_1\cap \dotsb \cap h_n)$ with a uniform distribution.

To sample uniformly a point~$y_i \in X_i$, we consider a random uniformly
distributed subspace~$L_i\in \mathbb{G}(\mop{codim} X_i)$. Almost surely, the
intersection~$X_i\cap L_i$ is finite and we sample uniformly a point~$y_i$ in
it. By Corollary~\ref{coro:uniform-sampling}, 
$y_i$ is uniformly distributed in~$X_i$.
Since~$L_i$ is a projective subpace, the computation of~$X_i\cap L_i$ requires a polynomial system solving in~$\mop{codim} X_i + 1$ homogeneous variables.
In the typical case where~$X_i$ is a hypersurface defined by a polynomial~$f_i$
and~$L_i$ is a projective line, this amounts to compute the zeros~$[x:y]\in
\bP^1$
of the homogeneous equation~$f_i(xp+yq) = 0$, for some basis~$\left\{ p,q
\right\}$ of~$L_i$.

Once we get the $y_i$, we compute, for each~$1\leq i\leq r$,
 some~$v_i \in U(n+1)$ which maps~$y_i$ to~$\zeta$ and~$\mathbb{T}_{y_i} X_i$
to~$h_i$ and we sample uniformly a~$w_i$ in the subgroup of all~$w\in U(n+1)$
such that~$w \zeta = \zeta$ and~$w L_i = L_i$. This subgroup is isometrically
isomorphic to~$U(1) \times U(\dim X_i) \times U(\mop{codim} X_i)$. We can sample
uniformly in a unitary group~$U(k)$ by considering the~$Q$ factor of the QR
decomposition of a random $k\times k$~Gaussian matrix. And then, we define~$u_i \eqdef w_i
v_i$.
When~$X_1,\dotsc,X_r$ are all hypersurfaces, we have proved the
following proposition.
\begin{proposition}\label{prop:complexity-sampling}
  If~$X_1,\dotsc,X_r$ are all hypersurfaces, Algorithm~\ref{algo:sample} samples a $\rho_\text{std}$-distributed point in the
  solution variety~$\cV$ with
  \begin{itemize}
  \item $r$ times root-finding of bivariate homogeneous polynomials of respective degrees $\deg
    X_1$,\ldots, $\deg X_r$;
  \item $O(n^3)$ samplings of the standard normal distribution on~$\bR$; and
  \item $O(n^4)$ arithmetic operations.
  \end{itemize}
\end{proposition}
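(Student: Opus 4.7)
The proof splits naturally into two parts: correctness of the distribution, and the operation count. For correctness, the plan is to check that the four sampling steps of Algorithm~\ref{algo:sample} match, in order, the four marginal/conditional distributions asserted in Theorem~\ref{thm:linearization}. Indeed, the theorem says that $L(\bfu,x)$ is uniform in $\cL$, that $\bfy=(u_i^{-1}x)_i$ is uniform in $\cX$ and independent of $L(\bfu,x)$, that $x$ conditional on $L(\bfu,x)$ is uniform in $\cap L(\bfu,x)$, and that $\bfu$ conditional on $(L(\bfu,x),x,\bfy)$ is uniform in $\cU_{x,\bfy,L(\bfu,x)}$. The algorithm realizes this hierarchy directly: it draws $\bfh=(h_1,\dotsc,h_r)$ uniformly in $\cL$, then $\zeta$ uniformly in $\cap\bfh$, then $\bfy$ uniformly in $\cX$ (this is the nontrivial point and is where Corollary~\ref{coro:uniform-sampling} is invoked: the uniform draw of $y_i$ from $X_i\cap L_i$ with a random $L_i$ is uniform in $X_i$), and finally $\bfu$ uniformly in the fiber of the map $\bfu\mapsto(L(\bfu,\zeta),\zeta,\bfy)$ over $(\bfh,\zeta,\bfy)$. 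So the output law is exactly $\rstd$.

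For the last step one must verify that the prescription really samples uniformly in $\cU_{\zeta,\bfy,\bfh}$. I would argue this by noting that $\cU_{\zeta,\bfy,\bfh}$ is a right coset (under some fixed $\bfv=(v_1,\dotsc,v_r)$ satisfying $v_iy_i=\zeta$ and $v_i\mathbb{T}_{y_i}X_i=h_i$) of the stabilizer of the flag $(\zeta,h_i)$ in $U(n+1)$, and that this stabilizer is isometric to $U(1)\times U(\dim X_i)\times U(\mop{codim} X_i)$. A uniform sample in each factor is produced by QR on an appropriately sized standard Gaussian matrix, since the $Q$-factor of a square Gaussian matrix is Haar-distributed in the unitary group; by the left-invariance of Haar measure, multiplying this $w_i$ on the left by $v_i$ gives the claimed uniform sample in $\cU_{\zeta,\bfy,\bfh}$.

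For the complexity, I would account for each ingredient assuming $X_1,\dotsc,X_r$ are hypersurfaces (so $r\leq n$ and each $L_i$ can be taken a projective line). Sampling the $h_i$ requires one linear form per $h_i$, i.e.\ $O(rn)=O(n^2)$ standard normals; computing a basis of $\cap\bfh$ is a kernel computation on an $r\times(n+1)$ matrix, which costs $O(n^3)$ arithmetic. Sampling $\zeta$ uniformly in $\bP(\cap\bfh)$ uses $O(n)$ additional normals and $O(n^2)$ arithmetic. Each $L_i$ is specified by $n$ linear forms, giving $O(n^2)$ normals per index and $O(n^3)$ in total; the intersection $X_i\cap L_i$ reduces to one bivariate homogeneous root-finding of degree $\deg X_i$, yielding the $r$ root-findings claimed. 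Constructing $v_i$ with $v_iy_i=\zeta,\ v_i\mathbb{T}_{y_i}X_i=h_i$ is linear algebra in $O(n^3)$ arithmetic per index, i.e.\ $O(n^4)$ total. Finally, the three unitary factors of the stabilizer are sampled by QR on Gaussian matrices of size at most $(n+1)\times(n+1)$: this uses $O(n^2)$ normals and $O(n^3)$ arithmetic per index, hence $O(n^3)$ normals and $O(n^4)$ arithmetic in total, matching the claim.

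The one place where a little care is needed is the sampling of $\bfy$, since one has to be sure that intersecting $X_i$ with a uniform subspace $L_i$ of complementary dimension and then choosing a uniform point in the finite set $X_i\cap L_i$ yields a uniform point in $X_i$ — this is exactly what Corollary~\ref{coro:uniform-sampling} provides; everything else is either a direct appeal to Theorem~\ref{thm:linearization} or a routine count of Gaussians and flops.
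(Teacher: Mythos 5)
Your proof is correct and follows essentially the same route as the paper's: correctness is by matching the four sampling stages to the conditional decomposition of $\rstd$ given in Theorem~\ref{thm:linearization}, with Corollary~\ref{coro:uniform-sampling} supplying the nontrivial step (uniformity of $y_i$ drawn from $X_i\cap L_i$), and the stabilizer of the flag $(\zeta,h_i)$ being $U(1)\times U(\dim X_i)\times U(\codim X_i)$, sampled via QR on Gaussians; the operation counts you give are the ones the paper intends. One small slip: you write that $\cU_{\zeta,\bfy,\bfh}$ is a right coset and then invoke ``left-invariance of Haar measure'' for left multiplication by $v_i$, but the algorithm sets $u_i = w_i v_i$, i.e.\ right multiplication by $v_i$; what matters is just that $w\mapsto wv_i$ is a measure-preserving bijection from the stabilizer to the coset (push-forward of Haar measure), which is true regardless of which invariance you name, so the conclusion stands.
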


\subsection{The split gamma number}
\label{sec:splitgamma}

In the classical theory, the condition number~$\mu$ plays two roles: first, by
definition, it bounds the variation of a zero after a pertubation of the system;
second, it is an upper bound for Smale's gamma number with some regularity properties (the
Lipschitz properties). Each role is reflected by a
factor~$\mu$ in the $\mu^2$ estimate.

In the rigid setting, the two roles are played by different numbers: the
variation of a zero is bounded by the incidence condition number $\kappa$
and the upper bound for $\gamma$ that we use is the \emph{split gamma number}
$\hat\gamma$. This will lead to a $\kappa \hat\gamma$ estimate for the
complexity of numerical continuation in the rigid setting. In this section, we
introduce the split gamma number and we start with some reminders about the gamma
number.

%\subsubsection{Reminders on the gamma theory}
%\label{sec:remind-gamma-theory}

Let~$F = (f_1,\dotsc,f_s) \in \cH[s]$ be a homogeneous polynomial system 
%in~$\bC[x_0,\dotsc,x_n]$
that we regard as a polynomial
map~$\bC^{n+1}\to \bC^s$. Let~$d_i \eqdef \deg f_i$ and~$D\eqdef \max_i d_i$.

When~$s=n$, the polynomial system~$F$ has generically finitely many zeros and
our primary goal is to compute them numerically and approximately. A fundamental
tool is Newton's operator. We use here the projective version introduced by
\textcite{Shub_1989}. For~$z \in \Proj$, projective class of some~$\bar
z\in\bC^{n+1}$, we define
\begin{equation}\label{eq:22}
  \cN_F(z) \eqdef \left[ \bar z - \ud_{\bar z} F|_{z^\perp}^{-1} \left( F(\bar z) \right) \right] \in \Proj, 
\end{equation}
where~$z^\perp$ is the orthogonal complement of~$z$ in~$\bC^{n+1}$. The
definition does not depend on the choice of~$\bar z$. Given a zero~$\zeta \in
\Proj$ of~$F$, we say that~$z\in\Proj$ \emph{approximates $\zeta$ as a zero of~$F$},
or that \emph{$z$ is an approximate zero of~$F$
  % \footnote{I follow here the terminology of \textcite{Smale_1987}. It is
  %   unusual nowadays to specify ``of the second kind'' but it I will need the notion of
  % ``approximate zero of the first kind'' in \S \ref{sec:computational-model}.}
  with associated zero~$\zeta$}, if for any~$k\geq 0$,
\[ d_\bP\left(\cN_F^k(z), \zeta \right) \leq 2^{1-2^k} d_\bP(z,\zeta), \]
where~$d_\bP$ is the geodesic distance in~$\Proj$, see \S \ref{sec:notat-basic-defin}.

The main result of the gamma theory is a sufficient condition for a point to
approximate a zero. For a polynomial system~$F$, in the general case~$r \leq n$,
we will use the following definition \parencites{ShubSmale_1996}[\S4]{Dedieu_2006} for the gamma number of~$F$ at $z \in
\Proj$:
\begin{equation}\label{eq:24}
 \gamma(F,z) \eqdef
  \begin{cases}
  \sup_{k\geq 2} \tn{\tfrac{1}{k!}\ud_zF^\dagger \cdot \ud_z^kF
    }^{\frac{1}{k-1}} & \text{if $\ud_zF$ is
    surjective,} \\
  \infty & \text{otherwise.}
  \end{cases}
\end{equation}
%Note that the scaling by~$\|z\|$ makes~$\gamma(F,z)$ well defined for~$z\in\Proj$.
(The definition does not depend on the choice of a unit representative~$\bar z$
of~$z$.)
When~$s=n$, the pseudo-inverse~$\ud_z F^\dagger$ is
often replaced by $\ud_z F|_{z^\perp}^{-1}$, as in Newton's iteration
\parencite[\emph{e.g.}][]{ShubSmale_1994,BurgisserCucker_2013}. If~$z$ is a zero
of~$F$, both definitions coincide, so the gamma theorem
(Theorem~\ref{thm:gamma}) is equally true for both variants.

When~$F = (f)$ is a single equation, that is~$s = 1$, it is useful to remark
that~$\ud_z f$ is a linear form and so~$\ud_z f^\dagger$ is simply $\| \ud_z f
\|^{-1} \pi$, where~$\pi$ is an isometric embedding of~$\bC$ in~$\bC^{n+1}$.
This gives~$\gamma(f,z)$ the following form:
\begin{equation}\label{eq:15}
 \gamma(f,z) = \sup_{k\geq 2} \left( \tfrac{1}{k!} \| \ud_zf \|^{-1} \VERT \ud_z^kf
   \VERT \right)^{\frac{1}{k-1}}.
\end{equation}

The following lower bound will be occasionally useful.
\begin{lemma}
  For any~$z\in\Proj$, $\gamma(F,z) \geq \frac{D-1}{2}$.
  \label{lem:gamma-lower-bound}
\end{lemma}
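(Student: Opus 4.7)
The definition of $\gamma$ makes the case $\ud_z F$ non-surjective trivial, since then $\gamma(F,z) = \infty \geq \frac{D-1}{2}$. So assume $\ud_z F$ is surjective. The idea is to bound the supremum in~\eqref{eq:24} from below by the $k=2$ term: since $1/(k-1) = 1$ when $k=2$, it suffices to prove
\[ \tn{\ud_z F^\dagger \cdot \ud_z^2 F} \geq D-1. \]

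\paragraph{Euler-type identity for the second derivative.} The next step is a homogeneity observation. For each component $f_i$ of $F$, the partial derivative $z \mapsto \ud_z f_i$ is a polynomial map homogeneous of degree $d_i - 1$. Differentiating the identity $\ud_{tz}f_i(w) = t^{d_i-1}\ud_z f_i(w)$ in $t$ at $t=1$ gives $\ud_z^2 f_i(z,w) = (d_i-1)\,\ud_z f_i(w)$. Stacking the $s$ components,
\[ \ud_z^2 F(z,w) = A\,\ud_z F(w), \qquad A \eqdef \mop{diag}(d_1-1,\dotsc,d_s-1). \]
Applying the pseudo-inverse on the left gives the compact formula $\ud_z F^\dagger \cdot \ud_z^2 F(z,w) = \ud_z F^\dagger \, A\,\ud_z F(w)$.

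\paragraph{Extracting the factor $D-1$.} Let $i^*$ be an index with $d_{i^*} = D$ and note that $\ud_z F^\dagger e_{i^*} \neq 0$ since $\ud_z F^\dagger$ is injective. Assume a unit representative $\|z\|=1$ and set
\[ w \eqdef \frac{\ud_z F^\dagger e_{i^*}}{\|\ud_z F^\dagger e_{i^*}\|}, \]
which is a unit vector. Because $\ud_z F \cdot \ud_z F^\dagger = \Id$ on $\bC^s$, one computes $\ud_z F(w) = e_{i^*}/\|\ud_z F^\dagger e_{i^*}\|$, hence $A\,\ud_z F(w) = (D-1)\,e_{i^*}/\|\ud_z F^\dagger e_{i^*}\|$, and finally
\[ \ud_z F^\dagger \cdot \ud_z^2 F(z,w) = (D-1)\, \frac{\ud_z F^\dagger e_{i^*}}{\|\ud_z F^\dagger e_{i^*}\|}, \]
which has norm exactly $D-1$. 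Taking the operator norm over the unit pair $(z,w)$ yields $\tn{\ud_z F^\dagger \cdot \ud_z^2 F} \geq D-1$, and therefore the $k=2$ term of~\eqref{eq:24} already gives $\gamma(F,z) \geq \frac{D-1}{2}$.

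\paragraph{Difficulty.} There is no real obstacle here; the only step that requires a small insight is spotting that the Euler-type identity $\ud_z^2 F(z,w) = A\,\ud_z F(w)$ pairs naturally with the pseudo-inverse to expose a clean factor~$D-1$, and that testing with the single pair $(u,w)$ just above is enough.
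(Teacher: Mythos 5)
Your proof is correct and takes essentially the same route as the paper: both use the Euler-type identity $\ud_z^2 F(z,w) = \mathrm{diag}(d_1-1,\dotsc,d_s-1)\,\ud_z F(w)$ and test the bilinear map $\ud_z F^\dagger \cdot \ud_z^2 F$ on the pair $(z,\ \ud_z F^\dagger e_i)$; the paper only omits the explicit normalization of the test vector $w$, which you spell out.
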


\begin{proof}
  We may assume that~$\ud_zF$ is surjective, otherwise the bound is trivial.
  Let~$d_i \eqdef \deg f_i$. Using the homogeneity, for any $u\in \bC^{n+1}$,
  \begin{equation}\label{eq:20}
   \ud_z^2F(z, u) =
    \left(\begin{smallmatrix}
        d_1 - 1 & & \\
        & \ddots & \\
        & & d_m-1
      \end{smallmatrix}\right) \ud_z F(u).
  \end{equation}

  We fix some $1\leq i\leq r$ and consider $u \eqdef \ud_zF^\dagger (e_i)$, where
  $e_i \eqdef (0,\dotsc,1,\dotsc,0)\in\bC^r$ with a single one at the
  $i$th position.
  Becase~$\ud_zF$ is surjective, $\ud_zF(\ud_zF^\dagger (e_i)) = e_i$,
  and by~\eqref{eq:20}, we have $\ud_z^2 F(z, u) = (d_i-1) e_i$ and then
  $\ud_zF^\dagger( \ud_z^2 F(z,u)) = (d_i-1)u$.
  This implies that~$\tn{\ud_zF^\dagger \ud_z^2 F} \geq d_i-1$, for any~$1\leq i
  \leq m$, and the claim follows.
\end{proof}

We now state the main result of the gamma theory in the projective setting, primarily due to \textcite{ShubSmale_1993b}.
\begin{theorem}[Shub, Smale]
  \label{thm:gamma}
  Let~$F\in \cH[n]$ be a homogeneous polynomial system.
  For any zero~$\zeta\in\Proj$ of~$F$ and any~$z\in\Proj$,
  if $d_\bP(z,\zeta)\gamma(F,\zeta) \leq \frac{1}{6}$
  then~$z$ approximates $\zeta$ as a zero of~$F$.
\end{theorem}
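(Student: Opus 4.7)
The plan is to follow Shub and Smale's classical strategy, reducing the projective statement to an affine version of Smale's $\alpha$-theory. Let $\bar\zeta \in \bC^{n+1}$ be a unit representative of $\zeta$ and let $\bar z$ be the unique representative of $z$ in the affine chart $\bar\zeta + \bar\zeta^\perp$. Projective Newton's operator $\cN_F$, as defined in \eqref{eq:22}, coincides up to reprojection onto~$\Proj$ with the standard Newton operator $\widetilde \cN_{\widetilde F}$ associated to the restriction $\widetilde F \eqdef F_{|\bar\zeta + \bar\zeta^\perp}$ of~$F$ to that affine chart; this is the canonical reduction by which all statements of projective gamma theory are obtained from their affine counterparts.

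Next I would prove the affine analogue: there exist universal constants $u_0$ and a function $\psi$ (the standard Smale $\psi(u) = 1 - 4u + 2u^2$) such that if $\|\bar z - \bar\zeta\| \cdot \gamma(\widetilde F, \bar\zeta) \leq u_0$ then $\|\widetilde \cN_{\widetilde F}^k(\bar z) - \bar\zeta\| \leq 2^{1-2^k} \|\bar z - \bar\zeta\|$. The two technical ingredients are the Taylor remainder estimate
\[ \bigl\| \ud_{\bar\zeta} \widetilde F^{-1}\bigl( \widetilde F(\bar z) - \ud_{\bar\zeta}\widetilde F(\bar z - \bar\zeta) \bigr) \bigr\| \leq \frac{\gamma \|\bar z - \bar\zeta\|^2}{1 - \gamma \|\bar z - \bar\zeta\|}, \]
which follows directly from expanding $\widetilde F$ at $\bar\zeta$ and the definition \eqref{eq:24} of $\gamma$, together with the $\gamma$-propagation lemma showing $\gamma(\widetilde F, w) \leq \gamma(\widetilde F, \bar\zeta)/\psi(u)$ for $w$ on the segment from $\bar\zeta$ to $\bar z$, with $u = \gamma(\widetilde F,\bar\zeta)\|w - \bar\zeta\|$. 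Combining these, one step of Newton contracts the error quadratically: $\|\widetilde \cN_{\widetilde F}(\bar z) - \bar\zeta\| \leq \varphi(u)\|\bar z - \bar\zeta\|$ with $\varphi(u) < \tfrac12$ for $u$ below a threshold; iterating yields the doubly-exponential bound.

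The final step is to convert back to projective distance. Writing $\bar z = (\cos\theta)\bar\zeta + (\sin\theta)\bfv$ for some unit $\bfv \in \bar\zeta^\perp$, one has $d_\bP(z,\zeta) = \theta$ while $\|\bar z - \bar\zeta\| \leq \tan\theta$ in the affine chart; the chain rule also relates $\gamma(\widetilde F, \bar\zeta)$ to the projective $\gamma(F,\zeta)$ (here they agree because $\ud_{\bar\zeta} F$ vanishes on $\bar\zeta$ by homogeneity, so $\ud_{\bar\zeta} F^\dagger$ lands in $\bar\zeta^\perp$, and the higher derivatives restricted to $\bar\zeta^\perp$ are dominated by the full ones). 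The threshold $\tfrac16$ is then chosen so that $d_\bP(z,\zeta)\gamma(F,\zeta)\leq\tfrac16$ forces $\tan(\theta)\gamma \leq u_0$ with enough slack to also control the projection onto $\Proj$ (which only shrinks distances in the relevant regime).

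The delicate part is the $\gamma$-propagation lemma: $\gamma$ is defined as a supremum over $k\geq 2$, so bounding $\gamma(F,w)$ in terms of $\gamma(F,\zeta)$ requires estimating $\ud_wF^\dagger \ud_w^k F$ via the power series of $F$ at $\zeta$, inverting $\ud_wF$ through a Neumann-type argument, and keeping track of all factorial coefficients so that the supremum over~$k$ collapses to the clean factor $1/\psi(u)$. Once this lemma is in hand, Theorem~\ref{thm:gamma} follows essentially by careful bookkeeping, and I would cite the detailed computations in \textcite{BurgisserCucker_2013} rather than reproduce them.
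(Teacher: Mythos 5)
The paper's own proof of Theorem~\ref{thm:gamma} is a one-line citation to \parencite[Theorem~16.38]{BurgisserCucker_2013}, with the parameter $r=0.981$ and the observation from Lemma~\ref{lem:gamma-lower-bound} that $\gamma(F,\zeta)\geq\frac12$ when $D\geq 2$; nothing is re-proved. Your proposal instead sketches the proof internals, and the sketch has a genuine gap precisely where you call the step a ``canonical reduction.''

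The projective Newton operator $\cN_F$ defined in~\eqref{eq:22} takes its correction in $z^\perp$, the orthogonal complement of the \emph{current iterate} $z$, and then reprojects. Your $\widetilde\cN_{\widetilde F}$, by contrast, works in the single fixed chart $\bar\zeta+\bar\zeta^\perp$ and always moves in $\bar\zeta^\perp$. These are different maps: the chart used by $\cN_F$ rotates with each iteration. A direct computation (already for $n=1$ and a quadratic $F$) shows the two iterates agree only to leading order in $d_\bP(z,\zeta)^2$; they are not equal, and after $k$ steps the discrepancy must be controlled uniformly. Tracking how the rotating chart interacts with the $\gamma$-propagation bound is exactly the technical content of the projective $\gamma$-theorem, which is why Bürgisser--Cucker (and Shub--Smale before them) carry out the estimates in the Fubini--Study geometry rather than by a literal reduction to one affine chart. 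The rest of your outline — the Taylor remainder estimate, the Neumann-series argument behind the $\gamma$-propagation lemma, the $\tan\theta$ versus $\theta$ bookkeeping, and the domination $\gamma(\widetilde F,\bar\zeta)\leq\gamma(F,\zeta)$ from restricting the higher derivatives — is sound as a plan, and you do cite \textcite{BurgisserCucker_2013} for the details, which is all the paper does. But you should not describe the passage from affine to projective Newton as a routine reprojection: that is where the real work hides, and your intermediate claim, as written, is false.
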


%The classical form of the projective $\gamma$-theorem \parencite[§14.1, Thm.~1]{BlumCuckerShubEtAl_1998} asserts that
%\[ \tan \left( d(\cN_F^k(z), \zeta) \right) \leq {2^{1-2^k}} \tan\left(d(z,\zeta) \right) \]
%given that~$\tan(d(z,\zeta)) \gamma(F,\zeta) \leq \frac{3-\sqrt{7}}{2}$.

\begin{proof}
  This is \parencite[Theorem~16.38]{BurgisserCucker_2013} with~$r=0.981$ (and we use
  that~$\gamma(F,\zeta)\geq \frac12$ when~$D \geq 2$, Lemma~\ref{lem:gamma-lower-bound}).
\end{proof}

Let~$F_1 \in \cH[s_1],\dotsc,F_r \in \cH[s_r]$ be homogeneous polynomial
systems, with~$s_1+\dotsb+s_r \leq n$.
Based on the incidence condition number of linear subspaces (\S \ref{sec:orth-determ-incid}),
we define the \emph{incidence condition number} of~$F_1,\dotsc,F_r$ at a
point~$x \in \Proj$ by
\begin{equation}\label{eq:38}
  \kappa(F_{1},\dotsc,F_r ; x) \eqdef \kappa\left( \ker(\ud_x F_1)^\perp, \dotsc,
    \ker( \ud_x F_r)^\perp \right).
\end{equation}
Since the orthogonal projector on~$\ker(\ud_x F_i)^\perp$ is~$\ud_x F_i^\dagger
\circ \ud_x F_i$, we can write
\begin{equation}
  \kappa(F_{1},\dotsc,F_r ; x) = \tn{
    \begin{pmatrix}
      \ud_x F_1^\dagger
      \circ \ud_x F_1 \\
      \hline
      \vdots \\
      \hline
      \ud_x F_r^\dagger
\circ \ud_x F_r
    \end{pmatrix}^\dagger
}.
\end{equation}
The \emph{split gamma number} of~$F_1,\dotsc,F_r$ at a
point~$x \in \Proj$ is defined by
\begin{equation}\label{eq:37}
  \hat\gamma(F_{1},\dotsc,F_r ; x) \eqdef \kappa( F_{1},\dotsc,F_r ; x) \left(
    \gamma(F_1, x)^2 + \dotsb + \gamma(F_r, x)^2 \right)^{\frac12}.
\end{equation}

The split gamma number separates the contribution of the $\gamma$ number of each
block of equations from the more geometric information contained in~$\kappa$:
for~$x\in \cap_i V(F_i)$,
the~$\kappa$ factor only depends on the relative position of the tangent spaces
of the varieties~$V(F_1),\dotsc,V(F_r)$ at~$x$; while the~$\gamma$ factor
quantifies how much each~$V(F_i)$ deviates from its tangent space at~$x$.
Note that when~$r=1$, then~$\hat\gamma(F,x) = \gamma(F,x)$.

\begin{theorem}\label{thm:gamma-leq-sgamma}
  Let $G \eqdef (F_1,
\dotsc, F_r) \in \cH[s_1+\dotsb+s_r]$ denote the concatenation of the systems.
  For any~$x\in \Proj$,
  \[ \gamma(G,x)
    \leq \hat\gamma(F_1, \dotsc, F_r ; x)
    \leq r\, \kappa(F_{1},\dotsc,F_r ; x) \gamma(G,x). \]
\end{theorem}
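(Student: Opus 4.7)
Both inequalities will follow from one identification of $\ud_x G^\dagger$. Set $E_i \eqdef (\ker \ud_x F_i)^\perp$. Since $\ker \ud_x G = \bigcap_i \ker \ud_x F_i = \bigcap_i E_i^\perp$, one checks that $\ud_x G$ is surjective if and only if each $\ud_x F_i$ is surjective \emph{and} the family $(E_i)_i$ is nondegenerate in the sense of \S\ref{sec:orth-determ-incid}. In every other case, both $\gamma(G,x)$ and $\hat\gamma(F_1,\dotsc,F_r;x)$ are infinite and the inequalities are vacuous, so I assume surjectivity throughout.

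The key identity is: for every $y = (y_1,\dotsc,y_r) \in \bC^{s_1} \times \dotsb \times \bC^{s_r}$,
\[
  \ud_x G^\dagger(y) \;=\; \mop{proj}(E_1,\dotsc,E_r)^\dagger \bigl( \ud_x F_1^\dagger(y_1), \dotsc, \ud_x F_r^\dagger(y_r) \bigr).
\]
Indeed, writing $v \eqdef \ud_x G^\dagger(y)$ and $w_i \eqdef \ud_x F_i^\dagger(y_i)$, the orthogonal projection $\pi_i v \in E_i$ satisfies $\ud_x F_i(\pi_i v) = \ud_x F_i(v) = y_i = \ud_x F_i(w_i)$, so $\pi_i v = w_i$ by injectivity of $\ud_x F_i|_{E_i}$; and since $v \in (\ker \ud_x G)^\perp = (\ker \mop{proj})^\perp$, one has $\mop{proj}^\dagger\mop{proj}(v) = v$, which gives the identity.

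Taking $y = \ud_x^k G(u_1,\dotsc,u_k)$ with unit vectors $u_j$ in the identity and applying the definition of $\kappa$ to the $w_i$ produces the pointwise bound
\[
  \bigl\| \tfrac{1}{k!} \ud_x G^\dagger \ud_x^k G(u_1,\dotsc,u_k) \bigr\| \;\leq\; \kappa \sqrt{\sum_i \bigl\| \tfrac{1}{k!} \ud_x F_i^\dagger \ud_x^k F_i(u_1,\dotsc,u_k) \bigr\|^2}.
\]
Supping over the $u_j$, using $\|\tfrac{1}{k!} \ud_x F_i^\dagger \ud_x^k F_i\| \leq \gamma(F_i,x)^{k-1}$, and then the decreasing monotonicity of $\ell^p$ norms on $\bR^r$ (which gives $\|a\|_{2(k-1)} \leq \|a\|_2$ for $k \geq 2$) yields $\|\tfrac{1}{k!}\ud_x G^\dagger \ud_x^k G\|^{1/(k-1)} \leq \kappa^{1/(k-1)} (\sum_i \gamma(F_i,x)^2)^{1/2}$; since $\kappa \geq 1$ by Lemma~\ref{lem:kappa-geq-1}, this is at most $\hat\gamma$, and supping over $k \geq 2$ gives the first inequality. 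The second is even more direct: from $\pi_i v = w_i$ one reads $\|\ud_x F_i^\dagger \ud_x^k F_i(u_1,\dotsc,u_k)\| = \|\pi_i v\| \leq \|v\|$, hence $\gamma(F_i,x) \leq \gamma(G,x)$ for each $i$, and therefore $\sqrt{\sum_i \gamma(F_i,x)^2} \leq \sqrt{r}\,\gamma(G,x) \leq r\,\gamma(G,x)$; multiplying by $\kappa$ concludes.

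The main subtlety lies in the first inequality: producing a bound independent of $k$ requires both the factorization through $\mop{proj}^\dagger$ (to introduce the $\kappa$ factor) and the $\ell^p$-monotonicity trick (to convert $\sqrt{\sum_i \gamma(F_i,x)^{2(k-1)}}$ into $(\sqrt{\sum_i \gamma(F_i,x)^2})^{k-1}$). Everything else is essentially linear algebra on pseudo-inverses.
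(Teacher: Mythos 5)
Your proof is correct and follows essentially the same route as the paper: the same key factorization $\ud_x G^\dagger = \mop{proj}(E_1,\dotsc,E_r)^\dagger \circ (\ud_x F_1^\dagger \times \dotsb \times \ud_x F_r^\dagger)$, followed by the same $\ell^p$-monotonicity step for the first inequality. For the second inequality you use $\|\pi_i v\| \le \|v\|$ directly to get $\gamma(F_i,x) \le \gamma(G,x)$, whereas the paper bounds $\tn{\mop{proj}}$ by $\sqrt{r}$ to get $\gamma(F_i,x) \le \sqrt{r}\,\gamma(G,x)$; your version is slightly sharper (yielding $\hat\gamma \le \sqrt{r}\,\kappa\,\gamma(G,x)$ before you relax to the stated $r$), but the overall argument is the same.
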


\begin{proof} It is easy to see that
  $\hat\gamma(F_1,\dotsc,F_r;x)$ is finite if and only if~$\gamma(G, x)$ is. Thus,
  we may assume that~$\ud G$ and the~$\ud F_i$ are surjective (we drop the
  index~$x$ in~$\ud_x$).
  We begin with the first inequality.
  Let~$K_i \eqdef \ker \ud_x F_i$ and $P \eqdef
  \operatorname{proj}(K_1^\perp,\dotsc,K_r^\perp)$,
  so that~$\kappa(F_1,\dotsc,F_r ; x) = \tn{P^\dagger}$.
  % The target space of~$\ud G$,
  % say~$W$, is the direct sum of the target spaces of the maps $\ud F_i$,
  % say~$V_i$. 
  We first prove that for any~$k\geq 2$ and any~$\mathbf{y} =
  (y_1,\dotsc,y_k) \in \left( \bC^{n+1} \right)^k$,
  \begin{equation}\label{eq:6}
    \ud G^\dagger \cdot \ud^k G(\mathbf{y}) = P^\dagger \left(\ud F_1^\dagger \cdot \ud^k F_1(\mathbf y),\dotsc,\ud F_r^\dagger \cdot \ud^k F_r(\mathbf y) \right).
  \end{equation}
  It is clear that
  \begin{equation*}
    \ud^k G(\mathbf{y}) = \left(\ud^k F_1(\mathbf y),\dotsc,\ud^k F_r(\mathbf y) \right) \in \bC^{s_1} \times \dotsb \times \bC^{s_r}.
  \end{equation*}
  Let~$v_i \eqdef \ud^k F_i(\mathbf y)$ and $\mathbf v \eqdef (v_1,\dotsc,v_r)$.
  Because $\ud G$ is surjective, we have~$\ud G \cdot \ud G^\dagger (\mathbf v) =
  \mathbf v$ and, equivalently, $\ud F_i \cdot \ud G^\dagger(\mathbf v) = v_i$.
  Therefore $\ud F_i^\dagger \cdot \ud F_i \cdot \ud G^\dagger \mathbf v = \ud F_i^\dagger
  v_i$. But~$\ud F_i^\dagger \cdot  \ud F_i$ is simply the orthogonal projection
  on~$K_i^\perp$. This gives $P \cdot \ud G^\dagger \cdot \ud^k G(\mathbf{y}) =
  \left(\ud F_1^\dagger v_1,\dotsc,\ud F_r^\dagger v_r \right)$, and since the
  image of~$\ud G^\dagger$ is orthogonal to the kernel of~$P$ we have
  $P^\dagger \cdot P \cdot \ud G^\dagger = \ud G^\dagger$. This proves
  \eqref{eq:6}.

  As a consequence, for any~$k \geq 2$,
  \begin{align*}
    \tn{ \tfrac{1}{k!} \ud G^\dagger \cdot \ud^k G }^{\frac{1}{k-1}}
    &\leq \tn{P^{\dagger}}^{\frac{1}{k-1}} \bigg( \sum_i \tn{ \tfrac{1}{k!} \ud F_i^\dagger \cdot \ud^k F_i }^2 \bigg)^{\frac{1}{2k-2}} \\
    &\leq \tn{P^{\dagger}}^{\frac{1}{k-1}} \bigg( \sum_i \gamma(F_i,x)^{2k - 2}  \bigg)^\frac{1}{2k-2} \\ % && \text{by definition of~$\gamma(F_i;x)$} \\
    &\leq \tn{P^{\dagger}}  \bigg( \sum_i \gamma(F_i,x)^{2}  \bigg)^\frac{1}{2} % && \text{by elementary inequalities}.
  \end{align*}
  using~$\tn{P^{\dagger}} \geq 1$ (Lemma~\ref{lem:kappa-geq-1}) and the
  monotonicity of $p$-norms with respect to~$p$.
  By definition, $\kappa(F_{1},\dotsc,F_{r};x) = \tn{P^{\dagger}}$, so we obtain the
  first inequality.
  
  Concerning the second inequality, Equation~\eqref{eq:6} implies that
  \begin{align*}
    \tn{ \tfrac{1}{k!} \ud F_i^\dagger \cdot \ud^k F_i } &\leq \tn{P} \tn{ \tfrac{1}{k!} \ud G^\dagger \cdot \ud^k G } \\
                                                             &\leq \tn{P} \gamma(G,x)^{k-1}.
  \end{align*}
  We note that~$\tn{P} \leq r^{\frac12}$ (as the direct sum of~$r$ orthogonal
  projectors with orthogonal images) and therefore $\gamma(F_i,x) \leq
  r^{\frac12} \gamma(G,x)$. Hence
  \[ \left(\sum_{i=1}^r \gamma(F_i,x)^2\right)^{\frac12} \leq  \left( r \cdot 
      r \gamma(G,x)^2 \right)^{\frac12} \leq  r \gamma(G,x), \]
  and the second inequality follows.
\end{proof}

\section{Numerical continuation}
\label{sec:numer-cont-algor}

In this part, we consider a more specific setting than in the previous one.
We are given a polynomial system~$F = (f_1,\dotsc,f_n) \in \cH[n]$, with nonzero
square-free polynomials~$f_1,\dotsc,f_n$,
and a $\bfu\in \cU = U(n+1)^n$, and we look for a zero of the polynomial system~$\bfu \cdot F \eqdef (f_1 \circ
u_1^{-1},\dotsc,f_n\circ u_n^{-1})$. We will perform the average analyses in the case
where~$\bfu$ is uniformly distributed and~$F$ is fixed.
To this end, we consider the rigid solution variety~$\cV$ relative to the projective
hypersurfaces~$V(f_1),\dotsc,V(f_n)$.
In concrete terms, we have~$r=n$ and
\[ \cV = \left\{ (\bfu, x)\in \cU\times\Proj \st f_1(u_1^{-1} x) = \dotsb = f_n(u_n^{-1}
  x) = 0 \right\}. \]
As the Grassmannian of hyperplanes $\mathbb{G}(n-1)$ is isomorphic to~$\Proj$,
the manifold~$\cL$ is just~$(\Proj)^n$
and~$L(\bfu, x)$ is the $n$-uple
\[ L(\bfu, x) = \left( [\ud_x(f_1\circ u_1^{-1})], \dotsc, [\ud_x(f_n\circ
    u_n^{-1})]\right). \]
In particular, we can define $L(\bfu,x)$ generically on~$\cU\times \Proj$, not
only on~$\cV$.
$L(\bfu, x)$ is not defined when one of the~$\ud_x(f_i\circ u_i^{-1})$ is zero.
When it is defined, we can identify~$L(\bfu,x)$ with one of its preimages under the projection
map~$\bS(\bC^{n+1})^n \to (\Proj)^n$, that is a $n\times (n+1)$
matrix with unit rows. Namely,
\begin{equation}\label{eq:25}
  L(\bfu,x) = \mop{diag} \left( \|\ud_x(f_1\circ u_1^{-1})\|^{-1}, \dotsc,
    \|\ud_x(f_n\circ u_n^{-1})\|^{-1} \right) \cdot \ud_x \left( \bfu \cdot F
  \right).
\end{equation}
Under this identification, we check that~$L(\bfu, x)$ is the matrix of the
map
\[ \mop{proj}\left(\ker(\ud_x(f_1\circ u_1^{-1})^\perp, \dotsc, \ker(\ud_x(f_n\circ
    u_n^{-1})^\perp \right), \]
and therefore, by definition of~$\kappa$ \eqref{eq:38},
\begin{equation}\label{eq:10}
  \kappa(\bfu \cdot F, x) =
  \begin{cases}
    \tn{L(\bfu, x)^\dagger} & \text{if $L(\bfu, x)$ is well defined and surjective,}\\
    \infty & \text{otherwise.}
  \end{cases}
\end{equation}

Section~\ref{sec:path-lifting} studies the possibility of continuing a zero of a
polynomial system~$\bfu\cdot F$ when~$\bfu$ varies continuously along a path.
Section~\ref{sec:condition-number} introduces the condition number associated to
the solution variety~$\cV$: it quantifies the extend to which a zero of a
polynomial system~$\bfu \cdot F$ is affected by a pertubation of~$\bfu$.
Section~\ref{sec:cond-numb-their} proves some Lipschitz-continuity properties
for the condition number and~$\hat \gamma$. They are technical but crucial for designing
the continuation algorithms, what does Section~\ref{sec:yet-anoth-cont}.
Section~\ref{sec:randomized-algorithm} provides a bound on the average number of
continuation steps required to compute a zero of a random system~$\bfu \cdot F$
in the rigid setting. Theorem~\ref{thm:expectation-K} is the main outcome of
this part.

\subsection{Path lifting}
\label{sec:path-lifting}

Let~$\Sigma \subset \cV$ denote the \emph{singular locus}:
\[ \Sigma \eqdef \left\{ (\bfu, x)\in\cV \st \ud_x(\bfu \cdot F) \text{ is not surjective} \right\}. \]
In other words, $(\bfu, x) \in \Sigma$ if and only if~$\bfu \cdot F$ has a singular
zero at~$x$.
Let~$\pi : \cV \to \cU$ be the projection~$\pi(\bfu, x) = \bfu$
and~$\Sigma' \eqdef \pi(\Sigma)$.
For any~$\bfu \in \cU\setminus \Sigma'$, the polynomial system~$\bfu\cdot F$ has
finitely many roots that vary continuously with~$\bfu$.
Given a continous path~$(\bfw_t)_{0\leq t\leq 1}$ in~$\cU \setminus \Sigma'$ and
a zero~$\zeta$ of~$\bfw_0 \cdot F$, there is a unique continuous lifting~$(\bfw_t, \eta_t)
\in \cV$ such that~$\eta_0 = \zeta$.
In contrast with the classical setting, the parameter space~$\cU$ is not a
complex variety, so it is not obvious anymore that a generic path in~$\cU$ does
not meet~$\Sigma'$ and that this lifting is possible.
This section aims at proving this fact.

\begin{lemma}\label{prop:codim-sigma}
  The real codimension of~$\Sigma'$ in~$\cU$ is at least~$2$.
\end{lemma}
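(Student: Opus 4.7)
The plan is to first show that the singular locus $\Sigma \subset \cV$ has real codimension at least~$2$ in~$\cV$, and then transport that bound to $\Sigma'=\pi(\Sigma) \subset \cU$ by a dimension-count argument. For the latter step: since $\cV$ is cut out of $\cU \times \Proj$ by the $n$ complex-valued equations $f_i(u_i^{-1} x) = 0$, imposing real codimension $2n = \dim_\bR \Proj$ at every regular point, we have $\dim_\bR \cV = \dim_\bR \cU$; and the fibers of $\pi$ are generically finite (zeros of a well-determined polynomial system), as are the fibers of $\pi_{|\Sigma}$. Basic dimension theory for real-analytic sets therefore gives $\dim_\bR \Sigma' \leq \dim_\bR \Sigma$, so it suffices to prove $\dim_\bR \Sigma \leq \dim_\bR \cV - 2$.

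To bound the codimension of $\Sigma$ in $\cV$, I would split according to which condition forces $\ud_x(\bfu \cdot F)$ to be non-surjective. Either (a) some row $\ud_x(f_i \circ u_i^{-1})$ vanishes, equivalently $y_i \eqdef u_i^{-1} x$ lies in the singular locus $\mop{Sing}(V(f_i))$, which has complex codimension $\geq 1$ in $V(f_i)$ (hence real codimension $\geq 2$) because $f_i$ is square-free; or (b) all gradients are nonzero but the tuple $L(\bfu, x) \in \cL = (\Proj)^n$ of corresponding hyperplanes lands in the ``dependent'' locus $\Delta_\cL$ where the $n$ normal directions are linearly dependent in $\bC^{n+1}$. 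The set $\Delta_\cL$ is the image in $\cL$ of the determinantal variety of rank-deficient $(n+1) \times n$ matrices, of complex codimension~$2$.

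By Theorem~\ref{thm:linearization}\ref{item:8}, the pushforward of $\rho_\text{std}$ under $(\bfu,x)\mapsto y_i$ is the uniform distribution on $V(f_i)$, and by Theorem~\ref{thm:linearization}(i), the pushforward under $L$ is the uniform distribution on $\cL$. Together with the coarea formula, these identifications imply that both maps are Riemannian submersions on a dense open subset of $\cV$, so the preimages of $\mop{Sing}(V(f_i))$ and of $\Delta_\cL$ have real codimension $\geq 2$ in $\cV$, and together they cover $\Sigma$. The main technical obstacle is to make the submersion step rigorous, particularly on the overlap with case~(a); I would handle this by explicitly computing the differentials of the two maps using the stabilizer decomposition from Algorithm~\ref{algo:sample}, which shows that the direction of $\ud_x(f_i\circ u_i^{-1}) \in \Proj$ can be rotated freely by varying $u_i$ while keeping $y_i$ and $x$ fixed.
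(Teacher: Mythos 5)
Your overall strategy is the same as the paper's: you bound the codimension of $\Sigma$ in $\cV$, using the same decomposition (your case~(a) is the paper's $\Sigma_0$, your case~(b) is $\Sigma_1$, with the same codimension-$2$ determinantal target), note $\dim_\bR \cV = \dim_\bR \cU$, and transport the bound to $\Sigma' = \pi(\Sigma)$ via $\dim_\bR \Sigma' \leq \dim_\bR \Sigma$. (You don't need the fibers of $\pi$ to be finite for that last step; dimension of the image of a smooth map is already $\leq$ dimension of the source.)

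The one place where your argument does not go through as written is the inference from Theorem~\ref{thm:linearization} to the submersion property. Knowing that the pushforward of $\rho_\text{std}$ under $(\bfu,x)\mapsto y_i$ or under $L$ is the uniform measure does not by itself imply that these maps are submersions on any particular open set; a map can have an absolutely continuous pushforward while still failing to have surjective differential along a positive-codimension bad set, which is exactly the kind of set you are trying to control. So this step is circular-adjacent: Theorem~\ref{thm:linearization}'s proof itself relies on identifying the relevant maps as Riemannian submersions, and you need that underlying fact, not its probabilistic corollary. The clean way to do it — which is what the paper does — is to bypass the measure-theoretic statements entirely and directly check that $\phi : \cU\times\Proj \to (\Proj)^n$, $(\bfu,x)\mapsto(u_1^{-1}x,\dotsc,u_n^{-1}x)$ and $\psi : \cV\setminus\Sigma_0 \to X_1^{\mathrm{reg}}\times\dotsb\times X_n^{\mathrm{reg}}\times\cL$ are proper submersions, hence locally trivial fibrations by Ehresmann, so that preimages of complex subvarieties of codimension $\geq 1$ have real codimension $\geq 2$. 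Your closing suggestion of ``explicitly computing the differentials'' is the right repair, and once carried out, your proof collapses onto the paper's.
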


\begin{proof}
  We first observe that the map $\phi : \cU\times \Proj \to (\Proj)^n$ defined by
  \begin{equation*}
    (\bfu, x) \mapsto (u_1^{-1} x, \dotsc, u_n^{-1} x)
  \end{equation*}
  is a proper submersion, and thus a locally trivial fibration, by Ehresmann's
  fibration theorem.
  The fibers have dimension
  \begin{align*}
    \dim_\bR \phi^{-1}(*) &= \dim_\bR (\cU\times \Proj) - n\dim_\bR \Proj = \dim_\bR \cU - (n-1) \dim_\bR \Proj.
  \end{align*}
  The solution variety~$\cV$
  is~$\phi^{-1}(X_1\times\dotsb\times X_n)$,
  and so the restriction~$\phi_{|\cV}$ induces a locally trivial fibration~$\cV
  \to X_1\times\dotsb\times X_n$.
  In particular
  \begin{align*}
    \dim_\bR \cV &= \sum_{i=1}^n \dim_\bR X_i + \dim_\bR \phi^{-1}(*)\\
                 &= n(\dim_\bR \Proj - 2) + \dim_\bR \cU - (n-1)\dim_\bR \Proj\\
                 &= \dim_\bR \cU.
  \end{align*}
 
  Let~$\Sigma_0 \subseteq \Sigma$ be the subset of all~$(\bfu, x)$ such
  that~$x$ is a singular zero of one of the~$f_i\circ u_ i^{-1}$ (that is a
  singular point of~$u_i X_i$).
  By definition,
  \[ \Sigma_0 = \phi^{-1} \left( \bigcup_{i=1}^n  X_1\times\dotsb\times
      \mop{Sing} X_i \times \dotsb \times X_n \right), \]
  so~$\Sigma_0$ is the preimage by a locally trivial fibration of a complex
  subvariety of real codimension at least~$2$. Thus~$\Sigma_0$ has codimension
  at least~$2$ in~$\cV$.
  
  Let~$\Sigma_1 \subseteq \Sigma$ be the subset of all~$(\bfu, x)$ such that
  all~$u_i X_i$ are smooth at~$x$ but~$\ud_x(\bfu\cdot F)$ is not surjective,
  so that~$\Sigma = \Sigma_0 \cup \Sigma_1$.
  To compute the dimension of~$\Sigma_1$, we observe that the map
  \begin{align*}
    \psi : \cV\setminus \Sigma_0 &\to X_1^\text{reg}\times\dotsb\times X_n^\text{reg} \times \cL\\
    (\bfu, x) &\mapsto \left( u_1^{-1} x, \dotsc, u_n^{-1} x, L(\bfu, x) \right)
  \end{align*}
  is a proper submersion and thus,  by Ehresmann's fibration theorem,
  a locally trivial fibration.
  Further, $\Sigma_1$ is the preimage of the complex subvariety of
  all~$(y_1,\dotsc,y_n,\bfh)$ such that the intersection of the
  hyperplanes~$h_1,\dotsc,h_n \subset \bC^{n+1}$
  has complex dimension~$\geq 2$.
  So~$\Sigma_1$ has real codimension at least~$2$ in~$\cV$,
  and so does~$\Sigma$.

  Since~$\Sigma'$ is the image of~$\Sigma$ by the projection map~$\pi:\cV \to \cU$,
  $\dim_\bR \Sigma' \leq \dim_\bR \Sigma$, and since~$\dim_\bR \cU = \dim_\bR \cV$, it follows
  that $\Sigma'$ has codimension at least~$2$ in~$\cU$.
\end{proof}

\begin{proposition}\label{coro:possibility-of-continuation}
  For almost all~$\bfu, \bfv \in \cU$, the shortest path in~$\cU$ from~$\bfu$ to~$\bfv$
  does not intersect~$\Sigma'$.
\end{proposition}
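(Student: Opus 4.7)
The plan is to combine Lemma~\ref{prop:codim-sigma} — which gives that $\Sigma'$ has real codimension at least $2$ in $\cU$ — with a dimension count along the geodesic flow in $\cU$.

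First I would fix some $\bfu \in \cU$ and argue pointwise: the set of $\bfv$ for which the shortest path from $\bfu$ to $\bfv$ meets $\Sigma'$ has Lebesgue measure zero in $\cU$. The group $\cU = U(n+1)^n$ is a compact connected Lie group with a bi-invariant Riemannian metric, hence geodesically complete, and the cut locus $C(\bfu)$ has Lebesgue measure zero. For $\bfv \notin C(\bfu)$ the shortest geodesic $\gamma_{\bfu,\bfv}:[0,1]\to\cU$ is unique and depends smoothly on $\bfv$.

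Next I would consider the evaluation map
\[ \Psi \colon (\cU \setminus C(\bfu)) \times (0,1) \longrightarrow \cU, \qquad (\bfv, t) \longmapsto \gamma_{\bfu,\bfv}(t). \]
For each fixed $t \in (0,1)$, the partial map $\bfv \mapsto \Psi(\bfv,t)$ is a local diffeomorphism, since it is inverted (locally) by extending the sub-geodesic by the factor $1/t$. Thus $\Psi$ is a smooth submersion onto $\cU$. Because $\Sigma'$ is semialgebraic of real codimension $\geq 2$ in $\cU$ (hence a countable union of smooth strata of codimension $\geq 2$), the preimage $\Psi^{-1}(\Sigma')$ has real codimension $\geq 2$ in the source, hence Hausdorff dimension at most $\dim_\bR \cU - 1$. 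Projecting onto the first factor by the $1$-Lipschitz coordinate projection yields a set of Hausdorff dimension at most $\dim_\bR \cU - 1$, which has Lebesgue measure zero in $\cU$.

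Finally I would conclude by Fubini: the set of pairs $(\bfu,\bfv) \in \cU\times\cU$ for which the shortest geodesic meets $\Sigma'$ is measurable and every $\bfu$-slice has measure zero, so the whole set has measure zero in $\cU\times\cU$. The delicate point is justifying the submersion property of $\Psi$ away from the cut locus, i.e. the fact that $\bfv \mapsto \gamma_{\bfu,\bfv}(t)$ is a local diffeomorphism for every fixed $t \in (0,1)$; this I would verify by writing geodesics explicitly through the exponential maps of the factors $U(n+1)$ and noting that no conjugate points can appear strictly inside $(0,1)$ along a minimizing geodesic.
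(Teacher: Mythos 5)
Your approach is valid and reaches the same conclusion, but it is organized differently from the paper's. The paper directly parametrizes the bad set $S\subset\cU\times\cU$: a pair $(\bfu,\bfv)$ whose connecting geodesic hits $\Sigma'$ is recovered from a point $\bfw\in\Sigma'$, a unit direction $\dot\gamma\in T_\bfw\cU$, and two parameters $t,s\in\bR$ with $\bfu=\gamma(t)$, $\bfv=\gamma(s)$; counting dimensions gives $\dim_\bR S\leq\dim_\bR\Sigma'+(\dim_\bR\cU-1)+2\leq\dim_\bR\cU^2-1$, so $S$ has measure zero. You instead fix $\bfu$, set up the geodesic evaluation map $\Psi(\bfv,t)=\gamma_{\bfu,\bfv}(t)$ on the complement of the cut locus, argue it is a submersion (no interior conjugate points on a minimizing geodesic), pull back the codimension-$\geq 2$ bound for $\Sigma'$, project onto the $\bfv$-factor losing one dimension, and finish with Fubini. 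Both arguments are dimension counts ultimately powered by Lemma~\ref{prop:codim-sigma}. Your slice-and-Fubini structure is longer but handles measurability and the cut locus more explicitly; the submersion step that you flag as delicate is indeed the only genuinely new ingredient, and it is a standard consequence of the absence of interior conjugate points along a minimizing geodesic, so the gap is readily fillable. One point worth making explicit in your write-up (which the paper also uses implicitly): the Hausdorff-dimension estimates require $\Sigma'$ to be a finite union of smooth strata, which holds because $\Sigma'$ is the compact projection image of the algebraic set $\Sigma$ and is therefore semialgebraic, as you correctly remark.
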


\begin{proof}
  The subset~$S \subset \cU\times\cU$ of all ill-posed pairs $(\bfu, \bfv)$ that do not satisfy the claim is
  parametrized by the data of a~$\bfw \in \Sigma'$, a unit vector~$\dot \gamma$ in~$T_\bfw
  \cU$ and two real numbers~$t$ and~$s$ such that~$\bfu = \gamma(t)$ and~$\bfv =
  \gamma(s)$, where~$\gamma : \bR \to \cU$ is the unique geodesic with~$\gamma(0) = \bfw$
  and~$\gamma'(0) = \dot \gamma$.
  Therefore, by Lemma~\ref{prop:codim-sigma},
  \[ \dim_\bR S \leq \dim_\bR \Sigma' + (\dim_\bR \cU - 1) + 2 \leq \dim_\bR \cU^2 -
    1, \]
  and~$S$ has real codimension at least~$1$ in~$\cU \times \cU$, which proves the claim.
\end{proof}

The statement still holds for more general paths as long as they are unitary
invariant. Let~$P : \cU \times \cU \times [0,1] \to \cU$ be a map such that for
any~$\bfu, \bfv, \bfw \in \cU$ and~$t\in [0,1]$, $P(\bfw\bfu, \bfw\bfv, t) =
\bfw P(\bfu, \bfv, t)$.
Let~$S \subset \cU\times \cU$ be the set of all~$(\bfu, \bfv)$
such that~$P(\bfu, \bfv,
t)\in\Sigma'$ for some~$t\in [0,1]$.
We check easily that
\diffblock{\begin{equation*}
 S = \left\{ \left( \bfw P(\one_\cU, \bfv, t)^{-1}, \bfw P(\one_\cU, \bfv, t)^{-1} \bfv \right) \st t\in [0,1], \bfw \in \Sigma' \text{ and } \bfv \in \cU\right\},
\end{equation*}}
so if~$P$ is regular enough to allow dimension counting, then $\dim_\bR S \leq 1+ \dim_\bR \Sigma' + \dim_\bR
\cU$, and by Lemma~\ref{prop:codim-sigma}, the codimension of~$S$ in~$\cU\times
  \cU$ is at least~$1$.

\subsection{Condition number}
\label{sec:condition-number}

The rigid solution variety, considered as a manifold of pairs problem--solution
has a natural condition number. We show that this is the incidence condition number~$\kappa$,
defined in~\S \ref{sec:splitgamma}. This is what makes the split gamma
number~$\hat\gamma$ fit nicely in the setting of the rigid solution variety. The
system~$F$ being fixed, we will denote~$\kappa(\bfu \cdot F, x)$ and $\hat
\gamma(\bfu \cdot F, x)$ simply as~$\kappa(\bfu,x)$ and~$\hat\gamma(\bfu,x)$.

% Note that the number~$\kappa(\bfu,x)$, defined as the incidence condition number
% of the tangent spaces~$T_x V(u_i\cdot f_i)$, depends only on the
% linearization~$L(\bfu,x)$, defined as the $n$-uple of the tangent spaces~$T_x
% V(u_i\cdot f_i)$.

\begin{lemma}
  \label{lem:mu-as-kappa}
  For any~$(\bfu, x)\in \cV \setminus \Sigma$
  and any tangent vector~$(\dot\bfu,\dot x)\in T_{\bfu, x}\cV$,
  \[ \|\dot x\| \leq \kappa(\bfu, x)\|\dot \bfu\|_u. \]
\end{lemma}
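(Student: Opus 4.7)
The plan is to parametrize a smooth curve $(\bfu_t, x_t)$ in $\cV$ through $(\bfu, x)$ with the prescribed tangent $(\dot \bfu, \dot x)$, choose unit lifts $\bar x_t \in \bS(\bC^{n+1})$ with derivative $\dot{\bar x} \in \bar x^\perp$ representing $\dot x$, and differentiate the defining equations $f_i(u_{i,t}^{-1} \bar x_t) = 0$. Using $\ud_t u_i^{-1} = -u_i^{-1} \dot u_i u_i^{-1}$ and setting $g_i \eqdef f_i \circ u_i^{-1}$, a short computation (and the identity $\ud_{u_i^{-1}\bar x} f_i = \ud_x g_i \circ u_i$) yields
\[ \ud_x g_i\bigl( \dot{\bar x} - \dot u_i u_i^* \bar x \bigr) = 0, \qquad i=1,\dotsc,n. \]
Denoting $K_i \eqdef \ker \ud_x g_i$ and $\pi_i$ the orthogonal projector onto $K_i^\perp$, this is equivalent to $\pi_i(\dot{\bar x}) = \pi_i(\dot u_i u_i^* \bar x)$.

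Next I want to apply the very definition of $\kappa$: the multiprojection $P \eqdef \mop{proj}(K_1^\perp,\dotsc,K_n^\perp)$ satisfies $\tn{P^\dagger} = \kappa(\bfu, x)$ by~\eqref{eq:10}. Because $(\bfu,x)\notin\Sigma$, $\ud_x(\bfu\cdot F)$ is surjective, so $\ker P = \cap_i K_i$ is exactly the line $\bC\bar x$ (Euler's identity gives $\bar x \in K_i$). Hence $(\ker P)^\perp = \bar x^\perp$, and since $\dot{\bar x}\in \bar x^\perp$, one has $P^\dagger P(\dot{\bar x}) = \dot{\bar x}$; therefore
\[ \|\dot x\|^2 = \|\dot{\bar x}\|^2 \leq \kappa(\bfu, x)^2 \sum_{i=1}^n \|\pi_i(\dot u_i u_i^* \bar x)\|^2. \]

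The main step is then to bound each $\|\pi_i(\dot u_i u_i^* \bar x)\|$ by $\|\dot u_i\|_u$. Here I use that $K_i^\perp \subseteq \bar x^\perp$ (again because $\bar x \in K_i$), so $\pi_i$ factors through the orthogonal projection $q$ onto $\bar x^\perp$: $\pi_i(v) = \pi_i(qv)$. Thus $\|\pi_i(\dot u_i u_i^* \bar x)\|\leq \|q(\dot u_i u_i^* \bar x)\|$. The crux is the inequality
\[ \| q(\dot u_i u_i^* \bar x) \| \leq \|\dot u_i\|_u, \]
which I would establish by a direct matrix computation: setting $S \eqdef \dot u_i u_i^*$ (skew-Hermitian, with $\|S\|_u = \|\dot u_i\|_u$ by right-invariance) and working in a basis where $\bar x = e_1$, one gets $\|q(S\bar x)\|^2 = \sum_{j\geq 2}|S_{j1}|^2$, while the skew-Hermitian relation $|S_{j1}| = |S_{1j}|$ yields $\|S\|_\Frob^2 \geq 2\sum_{j\geq 2}|S_{j1}|^2$, i.e.\ $\|q(S\bar x)\|^2 \leq \tfrac12\|S\|_\Frob^2 = \|S\|_u^2$.

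Combining these ingredients gives $\|\dot x\|^2 \leq \kappa(\bfu, x)^2 \sum_i \|\dot u_i\|_u^2 = \kappa(\bfu, x)^2 \|\dot \bfu\|_u^2$, which is the claim. The only delicate point is the factor on the right-hand side: a naive bound $\|S\bar x\| \leq \|S\|_{\text{op}} \leq \|S\|_\Frob$ would lose a $\sqrt{2}$; it is essential to exploit that $\pi_i$ annihilates $\bar x$ so that only the component of $S\bar x$ orthogonal to $\bar x$ contributes, and that the skew-Hermitian symmetry pairs the entries of the first row and first column.
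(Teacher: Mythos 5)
Your proof is correct and follows essentially the same route as the paper's: differentiate the defining equations to obtain a linear relation on $\dot x$ involving the multiprojection onto the rows of the Jacobian, use $\tn{P^\dagger}=\kappa$ together with $\dot{\bar x}\in\bar x^\perp=(\ker P)^\perp$, and bound the right-hand side by $\|\dot u_i\|_u$. The only cosmetic differences are that the paper first reduces to $\bfu=\one_\cU$ and cites Lemma~\ref{lem:riem-submersion-proj} for the estimate $\|\pi_x(\dot u_i x)\|\leq\|\dot u_i\|_u$, whereas you prove that bound by a direct computation with the skew-Hermitian matrix $\dot u_i u_i^*$ — same content, different packaging.
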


\begin{proof}
  Without loss of generality, we may assume that~$\bfu = \one_\cU$
  % and~$x = [1:0:\dotsb:0]$,
  which simplifies notations.
  The tangent space of~$\cV$ at~$(\bfu, x)$ is
  \begin{equation}\label{eq:13}
    \begin{aligned}[t]
      T_{\bfu, x} \cV
      &=
      \begin{multlined}[t][.7\linewidth]
        \left\{ (\dot\bfu, \dot x) \in T_\bfu\cU \times T_x \Proj \st \right . \\ \left. \forall i, f_i \left( (\one_\cU + t \dot u_i + o(t))^{-1} (x+t \dot x + o(t)) \right) = o(t) \text{ as } t\to 0 \right\}
      \end{multlined}\\
      &= \left\{ (\dot\bfu, \dot x) \in T_\bfu\cU \times T_x \Proj \st \forall i, f_i\left(x + t( \dot x - \dot u_i x) \right) = o(t) \text{ as } t\to 0 \right\}\\
      &= \left\{ (\dot\bfu, \dot x) \in T_\bfu\cU \times T_x \Proj
        \st \forall i, \ud_x f_i (\dot x) = \ud_x f_i( \dot u_i x) \right\}.
    \end{aligned}
  \end{equation}
  With the identification~\eqref{eq:25},
  we obtain that $(\dot\bfu,\dot x) \in
  T_{\bfu,x}\cV$ if and only if
  \begin{equation}\label{eq:26}
   L(\bfu, x)(\dot x) = \diffblock{\left( \|\ud_x f_i\|^{-1} \ud_x f_i( \dot u_i x)
    \right)_{1\leq i\leq n}}.
  \end{equation}
  For all~$1\leq i\leq n$, $\ud_xf_i(x) = \deg(f_i) f_i(x) =
  0$ (Euler's relation), therefore
  \begin{equation}\label{eq:42}
    |  \ud_xf_i( \dot u_i x )| = | \ud_xf_i( \pi_x(\dot u_i x) )| \leq \|\ud_x f_i\| \| \pi_x(\dot u_i x)\|,
  \end{equation}
  where~$\pi_x$ is the orthogonal projection on~$\left\{ x
  \right\}^\perp$. We check that $\| \pi_x(\dot u_i x)\| \leq \|\dot u_i\|_u$,
  as in Lemma~\ref{lem:riem-submersion-proj}.
  If Equation~\eqref{eq:26} does hold, then
  \begin{align*}
    \| L(\bfu, x)(\dot x) \|^2 &=  \sum_i \| \ud_x f_i \|^{-2} | \ud_xf_i( \dot u_i x )|^2\\
                               &\leq \sum_i \|\dot u_i\|_u^2, && \text{by \eqref{eq:42},}\\
                               &= \|\dot\bfu\|^2_u.
  \end{align*}
  Moreover~$\dot x = L(\bfu, x)^\dagger ( L(\bfu, x)(\dot x))$, because~$\dot x$ is orthogonal
  to~$x$ and the kernel of~$L(\bfu, x)$ is~$\bC x$.
  Therefore
  \[ \|\dot x\| \leq \tn{L(\bfu, x)^{\dagger}} \|L(\bfu, x)(\dot x) \| \leq
    \tn{L(\bfu, x)^{\dagger}} \|\dot \bfu\|_u, \]
  and the claim follows with~$\kappa(\bfu, x) =  \tn{L(\bfu, x)^{\dagger}}$, by~\eqref{eq:10}.
\end{proof}

The expected value of the incidence condition number~$\kappa$ is very tame and depends on
the ambient dimension~$n$ only.
This is one of the key points that strongly contrasts with the
classical setting. 

\begin{proposition}\label{prop:expectation-kappa}
  If $(\bfu,\zeta)\in \cV$ is~$\rstd$-distributed then,
  $\bE\left[\kappa(\bfu,\zeta)^2\right] \leq 6 n^2$.
\end{proposition}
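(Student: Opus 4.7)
The whole proof rests on Theorem~\ref{thm:linearization}(i), which in the present hyper\-surface setting ($\cL = (\Proj)^n$) says that the linearization $L(\bfu, \zeta)$ is uniformly distributed in $(\Proj)^n$. Via identification~\eqref{eq:25} and formula~\eqref{eq:10}, the problem reduces to bounding $\bE[\tn{L^\dagger}^2]$, where $L$ is the $n \times (n+1)$ random matrix whose rows $v_1^*,\dotsc,v_n^*$ are obtained by picking a unit representative of each projective row; the operator norm is insensitive to the resulting phase ambiguity, so we may assume $v_1,\dotsc,v_n$ are i.i.d.\ uniform on the unit sphere of $\bC^{n+1}$.

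The plan is then to pass to the Frobenius norm,
\[ \tn{L^\dagger}^2 \leq \|L^\dagger\|_F^2 = \sum_{i=1}^n \|(L^\dagger)_i\|^2, \]
and to interpret each column of $L^\dagger$ geometrically. Since $(L^\dagger)_i$ is the unique vector $w \in V \eqdef \mathrm{span}(v_1,\dotsc,v_n)$ with $\langle v_j, w\rangle = \delta_{ij}$, a short computation identifies $(L^\dagger)_i = \tilde v_i / \|\tilde v_i\|^2$, where $\tilde v_i$ is the component of $v_i$ orthogonal to $V_{-i} \eqdef \mathrm{span}\{v_j : j\neq i\}$. Hence $\|(L^\dagger)_i\|^2 = 1/d_i^2$ with $d_i \eqdef d(v_i, V_{-i})$ the distance from $v_i$ to the span of the other rows.

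By symmetry and linearity of expectation, $\bE[\tn{L^\dagger}^2] \leq n\, \bE[1/d_1^2]$. Conditioning on $V_{-1}$ and using unitary invariance, we may write $v_1 = g/\|g\|$ for a standard complex Gaussian $g \in \bC^{n+1}$, so that $d_1^2 = \|\pi_{V_{-1}^\perp} g\|^2/\|g\|^2 = A/(A+B)$ with $A \eqdef \|\pi_{V_{-1}^\perp} g\|^2 \sim \Gamma(2,1)$ and $B \eqdef \|\pi_{V_{-1}} g\|^2 \sim \Gamma(n-1,1)$ independent (the complex dimensions are $2$ and $n-1$). Thus $d_1^2 \sim \mathrm{Beta}(2, n-1)$, and a direct evaluation gives $\bE[1/d_1^2] = 1 + \bE[B]\bE[1/A] = 1+(n-1) = n$ for $n\geq 2$; the case $n=1$ is trivial ($\kappa \equiv 1$). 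Altogether, $\bE[\kappa(\bfu, \zeta)^2] \leq n^2$, comfortably within the claimed $6n^2$.

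The only nontrivial step is the translation of Theorem~\ref{thm:linearization}(i) into the i.i.d.\ uniform structure of the rows of $L$; once that identification is made, the argument is an elementary Frobenius/Beta calculation, and the generous constant $6$ in the statement simply absorbs any slack.
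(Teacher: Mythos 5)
Your proof is correct, and it takes a genuinely different route from the paper. Both arguments begin identically: Theorem~\ref{thm:linearization}(i) together with~\eqref{eq:25} and~\eqref{eq:10} reduce the problem to bounding~$\bE\big[\tn{M^\dagger}^2\big]$, where~$M$ is an $n\times(n+1)$ matrix with i.i.d.\ rows uniform on~$\bS(\bC^{n+1})$. From there the paper writes~$M^\dagger=(TM)^\dagger T$ with~$T$ a diagonal $\chi$-matrix to reach a complex Gaussian~$TM$, then applies H\"older's inequality with the exponents~$n'=1+\tfrac1{n+1}$ and~$n+2$ and invokes the explicit tail/moment formula of \textcite[Theorem~20]{BeltranPardo_2011} (ultimately Edelman's) for~$\bE\big[\tn{(TM)^\dagger}^{2n'}\big]$, landing on the constant~$6$. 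You instead bound the operator norm by the Frobenius norm, identify the $i$-th column of~$M^\dagger$ as~$\tilde v_i/\|\tilde v_i\|^2$ (the normalized component of~$v_i$ orthogonal to the span of the other rows), and observe that~$\|(M^\dagger)_i\|^{-2}=d(v_i,V_{-i})^2\sim\mathrm{Beta}(2,n-1)$, so that~$\bE\big[\|M^\dagger\|_F^2\big]=n\,\bE[1/d_1^2]=n\cdot n=n^2$ exactly (for~$n\geq2$), hence~$\bE[\kappa^2]\leq n^2$. This is self-contained, avoids Edelman's formula and the somewhat opaque H\"older exponents, and even yields a strictly better constant ($n^2$ instead of~$6n^2$); the paper's heavier machinery is arguably overkill for this particular bound. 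The only step that warrants a sentence of justification in a polished write-up is the algebraic identification~$(M^\dagger)_i=\tilde v_i/\|\tilde v_i\|^2$, and you supply exactly the characterization needed (the unique~$w$ in the row space with~$\langle v_j,w\rangle=\delta_{ij}$), so the argument is complete. You also correctly flag and dispose of the degenerate case~$n=1$, where the Beta parametrization breaks down.
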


\begin{proof}
  Let~$M$ be a random $n\times (n+1)$ matrix whose rows are independent and
  uniformly distributed in~$\bS(\bC^{n+1})$. It follows from~\eqref{eq:10} and
  Theorem~\ref{thm:linearization}(i) that $\kappa(\bfu,\zeta)$ has the same
  probability distribution as~$\VERT M^\dagger \VERT$.

  Let~$T$ be an $n\times n$ diagonal random matrix whose coefficients are independent
  $\chi$-distributed random variables with $2n+2$ degrees of freedom, so that~$TM$
  is a random Gaussian matrix (the coefficients are independent standard normal
  complex numbers). Obviously, $M^\dagger = (TM)^\dagger \cdot T$ and therefore
  $\VERT{M^\dagger}\VERT \leq \VERT(TM)^\dagger\VERT
  \tn{T}$. Hölder's inequality with conjugate exponents~$n' \eqdef
  1+\frac{1}{n+1}$ and~$n+2$, gives
  \begin{equation}
    \label{eq:23}
    \bE \left[ \VERT M^\dagger\VERT^2 \right]\leq \bE \left[ \VERT(TM)^\dagger\VERT^{2n'} \right]^\frac{1}{n'} \bE \left[ \tn{T}^{2n+4} \right]^\frac{1}{n+2}. 
  \end{equation}
  We now give upper bounds for both factors in the right-hand side.
  According to
  \textcite[Theorem~20]{BeltranPardo_2011}, whose result is derived from the
  work of \textcite{Edelman_1989},
  \begin{equation}\label{eq:9}
    \bE \left[ \VERT(TM)^\dagger\VERT^{2n'} \right] = 2^{-n'} \sum_{k=1}^{n} \binom{n+1}{k+1} \frac{\Gamma(k-n'+1)}{\Gamma(k)} n^{-k+n'-1}.
  \end{equation}
  We proceed as in \parencite[Theorem~10]{Lairez_2017}
  and deduce that
  \begin{equation}
    \bE \left[ \VERT(TM)^\dagger\VERT^{2n'} \right]^\frac{1}{n'} \leq  \left( \frac{5}{4 (2-n')} \right)^{\frac{1}{n'}} \frac{n}{2} \leq n.
  \end{equation} 
  Concerning the second factor, 
  $\tn{T}^2$ is the maximum of $n$
  $\chi^2$-distributed random variables with $2n+2$ degrees of freedom, say
  $Z_1,\dotsc,Z_n$, hence
  \[ \bE \left[ \tn{T}^{2n+4} \right] \leq \sum_{i=1}^n \bE[Z_i^{n+2}] = 2^{n+2}
    \frac{(2n+2)!}{(n-1)!}. \]
  Therefore,
  \[ \bE \left[ \VERT M^\dagger\VERT^2 \right] \leq  n \left( 
         2^{n+2}
    \frac{(2n+2)!}{(n-1)!} \right)^{\frac{1}{n+2}} \leq 6 n^2, \]
  after a few numerical computations.
\end{proof}

\subsection{Lipschitz properties}
\label{sec:cond-numb-their}

We aim at bounding the variation of the numbers~$\kappa$
and~$\hat\gamma$ on the Riemannian manifold~$\cU\times \Proj$.
In particular, we will prove that~$1/\hat \gamma$ is Lipschitz-continuous.
Traditionally, such results
bound directly the value at some point~$x$ with respect to the value at some other
point~$y$ and the distance from~$x$ to~$y$. For example
\parencite[Lemme~131]{Dedieu_2006}, for any~$x,y\in\Proj$,
\begin{equation}
  \label{eq:3}
  \gamma(F,y) \leq \gamma(F,x) \, q\left( d_\bP(x,y) \gamma(F,x) \right),
\end{equation}
where~$q(u) \eqdef \frac{1}{(1-u)(1-4u+2u^2)} = 1 + 5u + O(u^2)$, given
that~$d_\bP(x,y) \gamma(F,x) \leq 1-1/\sqrt{2}$.
As much as possible, I tried to express this kind of inequalities
as a bound on the derivative of the function under consideration.
To first order, this is equivalent.

\begin{proposition}
  \label{prop:gamma-lip}
  Let~$F\in\cH[r]$ and~$\gamma_F : x\in \Proj \mapsto \gamma(F,x)$.
  For any~$x\in\Proj$, we have $\|\ud_x \gamma_F\| \leq 5 \gamma_F^2$.
  % the function
  % $x\in\bP \mapsto 1/\gamma(F,x)$ is Lipschitz continuous with constant at
  % most~$5$. Equivalently, $\|\ud_x \gamma(F,-)\| \leq 5 \gamma(F,x)^2$.
\end{proposition}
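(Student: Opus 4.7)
The plan is to derive the derivative bound as the infinitesimal form of the already-known multiplicative Lipschitz estimate~\eqref{eq:3}, exploiting the first-order expansion $q(u) = 1 + 5u + O(u^2)$ (obtained by expanding $1/\bigl((1-u)(1-4u+2u^2)\bigr) = 1/(1 - 5u + O(u^2))$). This matches the author's general philosophy stated just before the proposition: the classical product-type Lipschitz bound and the derivative bound are equivalent to first order.

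First, fix $x \in \Proj$ where $\gamma_F(x) < \infty$ (otherwise the claim is vacuous), and fix a unit tangent vector $\dot x \in T_x \Proj$. Consider the geodesic $y_t \eqdef \exp_x(t \dot x)$, so that $d_\bP(x, y_t) = t$ for small $t$. For $t$ small enough that $t\gamma_F(x) \leq 1 - 1/\sqrt{2}$, inequality~\eqref{eq:3} gives
\[
  \gamma_F(y_t) - \gamma_F(x) \leq \gamma_F(x)\bigl(q(t\gamma_F(x)) - 1\bigr) = 5 t \gamma_F(x)^2 + O(t^2).
\]
Conversely, since~\eqref{eq:3} implies local continuity of $\gamma_F$, for $t$ small enough we also have $t\gamma_F(y_t) \leq 1 - 1/\sqrt{2}$, and applying~\eqref{eq:3} with the roles of $x$ and $y_t$ swapped yields
\[
  \gamma_F(x) - \gamma_F(y_t) \leq 5 t\, \gamma_F(y_t)^2 + O(t^2) = 5 t\, \gamma_F(x)^2 + O(t^2),
\]
where the last equality uses $\gamma_F(y_t) = \gamma_F(x) + O(t)$. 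Combining, dividing by $t$, and sending $t \to 0^+$ yields $|\ud_x \gamma_F(\dot x)| \leq 5 \gamma_F(x)^2$. Since $\dot x$ was an arbitrary unit tangent vector, $\|\ud_x \gamma_F\| \leq 5 \gamma_F(x)^2$.

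The one genuine subtlety is that $\gamma_F$, being defined as a supremum over $k \geq 2$ of continuous functions $x \mapsto \|\tfrac{1}{k!}\ud_xF^\dagger \cdot \ud_x^kF\|^{1/(k-1)}$, is not \emph{a priori} everywhere differentiable. The cleanest way to handle this is to read $\|\ud_x \gamma_F\|$ as the local Lipschitz constant: since~\eqref{eq:3} already establishes that $\gamma_F$ is locally Lipschitz on $\{\gamma_F < \infty\}$, Rademacher's theorem guarantees classical differentiability almost everywhere, and the pointwise bound above holds at all such points; equivalently, the bound is the statement about the upper Dini derivative. The main work is therefore the two Taylor expansions above; no further machinery is required.
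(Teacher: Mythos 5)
Your proof is correct and takes essentially the same route as the paper: both bound $\gamma_F(y)-\gamma_F(x)$ and $\gamma_F(x)-\gamma_F(y)$ by applying~\eqref{eq:3} in each direction and expanding $q(u)=1+5u+O(u^2)$ to first order. The only cosmetic difference is that you invoke Rademacher/Dini derivatives to handle non-differentiability, whereas the paper sidesteps this entirely by \emph{defining} $\|\ud_x\gamma_F\|$ as the $\limsup$ of difference quotients (as stated in the remark immediately following the proposition), so no further machinery is needed.
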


Note that $\gamma_F$ may not be differentiable everywhere, so the inequality $\|\ud_x
\gamma_F\| \leq 5 \gamma_F(x)^2$
really means that
\[ \limsup_{y\to x} \frac{\left|\gamma_F(y)-\gamma_F(x)\right|}{d_\bP(y,x)} \leq
  5 \gamma_F(x)^2. \]
It is easy to check that Proposition~\ref{prop:gamma-lip} is equivalent to the
Lipschitz continuity of the function~$1/\gamma_F$, with Lipschitz constant at most~$5$.
We give~$\|\ud \kappa\|$ and~$\|\ud \hat\gamma\|$ an analogue meaning.

\begin{proof}[Proof of Proposition~\ref{prop:gamma-lip}]
  The most direct way to see this is by~\eqref{eq:3}:
  \[ \frac{\gamma_F(y)-\gamma_F(x)}{d_\bP(y,x)} \leq \gamma_F(x)
    \frac{5 u + O(u^2)}{d_\bP(y,x)} = 5 \gamma_F(x)^2 \left( 1 + O(d_\bP(x,y))
    \right), \]
  as~$y\to x$, where~$u\eqdef d_\bP(x,y) \gamma_F(x)$,
  and
  \[ \frac{\gamma_F(x)-\gamma_F(y)}{d_\bP(x,y)} \leq \gamma_F(y)
    \frac{5 v + O(v^2)}{d_\bP(x,y)} = 5 (\gamma_F(x) + o(1))^2 \left( 1 + O(d_\bP(x,y)
    \right), \]
  where~$v\eqdef d_\bP(x,y) \gamma_F(y)$.
\end{proof}

\begin{lemma}
  \label{lemma:kappa-lip}
  On~$\cU\times \Proj$,
  $\| \ud\kappa \| \leq \kappa^2  + 3 \kappa \hat\gamma$.
  Moreover, if~$D \geq 2$ then $\|\ud\kappa\| \leq 5 \kappa\hat\gamma$.
\end{lemma}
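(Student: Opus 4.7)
The plan is to exploit $\kappa = 1/\sigma$, where $\sigma = \sigma_n(L(\bfu,x))$ is the smallest singular value of the matrix $L$ of normalized gradients from~\eqref{eq:25}. Since $\sigma$ depends only on the projective directions of the rows of~$L$ (changing the individual phases leaves all singular values invariant), the effective differential lives in the projective tangent space: $\dot L_i \in T_{[L_i]}\Proj = L_i^\perp$. Weyl's perturbation inequality then yields $|\dot\sigma| \leq \|\dot L\|_\Frob = \bigl(\sum_i \|\dot L_i\|^2\bigr)^{1/2}$, hence $\|\ud\kappa\| \leq \kappa^2 \|\dot L\|_\Frob$. The task reduces to showing that on a unit tangent vector $(\dot\bfu,\dot x)$,
\[ \|\dot L\|_\Frob \leq 1 + 3\Bigl(\textstyle\sum_i \gamma_i^2\Bigr)^{1/2}, \]
with $\gamma_i \eqdef \gamma(f_i\circ u_i^{-1}, x) = \gamma(f_i, y_i)$ for $y_i \eqdef u_i^{-1}x$. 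Multiplying by~$\kappa^2$ and recalling $\hat\gamma = \kappa (\sum_i \gamma_i^2)^{1/2}$ yields the first claim.

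For each row I would parametrize $L_i = u_i h_i(y_i)$ with $h_i(y) \eqdef \nabla f_i(y)/\|\nabla f_i(y)\|$, and set $\xi_i \eqdef u_i^{-1}\dot u_i$ (skew-Hermitian), $\dot y_i = -\xi_i y_i + u_i^{-1}\dot x$. Differentiating and projecting onto $L_i^\perp$ splits $\dot L_i$ into a rotational part $u_i \xi_i h_i(y_i)$ and a curvature part $u_i\,\ud h_i|_{y_i}(\dot y_i)$. The rotational part is handled by Lemma~\ref{lem:riem-submersion-proj} applied to the Riemannian submersion $u \mapsto u\cdot h_i(y_i)$, whose horizontal isometry bounds the projective norm by $\|\xi_i\|_u = \|\dot u_i\|_u$. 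For the curvature part, the $k=2$ instance of definition~\eqref{eq:24} gives $\tn{\ud^2_{y_i}f_i} \leq 2\gamma_i\|\nabla f_i(y_i)\|$, so $\ud h_i|_{y_i}$ has norm at most $2\gamma_i$ as a map into $T_{[h_i(y_i)]}\Proj$.

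The key refinement is an Euler cancellation. The relation $\ud^2_{y_i}f_i(y_i,\cdot) = (d_i-1)\ud_{y_i}f_i$ forces $\ud h_i|_{y_i}$ to send $\bC y_i$ into $\bC h_i(y_i)$, which the projection onto $h_i(y_i)^\perp$ annihilates. Splitting $\xi_i = \xi_i^s + \xi_i^m$ along and perpendicular to the stabilizer of~$[y_i]$ (so that $\xi_i^s y_i \in \bC y_i$ and $\xi_i^m y_i \in y_i^\perp$), the curvature contribution reduces to $2\gamma_i(\|\dot x\| + \|\xi_i^m y_i\|) = 2\gamma_i(\|\dot x\| + \|\xi_i^m\|_u)$, where the last equality again uses Lemma~\ref{lem:riem-submersion-proj}.

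Summing over~$i$ in $\ell^2$ and applying Minkowski,
\[ \|\dot L\|_\Frob \leq \|\dot\bfu\| + 2\Bigl(\textstyle\sum_i \gamma_i^2(\|\dot x\|+\|\xi_i^m\|_u)^2\Bigr)^{1/2}. \]
Expanding with $(a+b)^2 \leq 2(a^2+b^2)$, bounding $\sum_i \gamma_i^2\|\xi_i^m\|_u^2 \leq (\sum_j\gamma_j^2)\|\dot\bfu\|^2$, and using $\|\dot\bfu\|^2 + \|\dot x\|^2 = 1$, the parenthesized sum is at most $2\sum_i\gamma_i^2$. Therefore $\|\dot L\|_\Frob \leq \|\dot\bfu\| + 2\sqrt{2}(\sum_i\gamma_i^2)^{1/2} \leq 1 + 3(\sum_i\gamma_i^2)^{1/2}$, proving the first inequality. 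The second is automatic: when $D\geq 2$, Lemma~\ref{lem:gamma-lower-bound} gives $\gamma_i \geq 1/2$, whence $\hat\gamma \geq \kappa\sqrt n/2 \geq \kappa/2$, so $\kappa^2 \leq 2\kappa\hat\gamma$ and $\kappa^2 + 3\kappa\hat\gamma \leq 5\kappa\hat\gamma$. The main obstacle is precisely the Euler cancellation: without killing the radial motion of~$y_i$ under~$\xi_i$, an extra $\|\dot u_i\|_u$ term would appear inside the curvature part and spoil the $2\sqrt 2 < 3$ margin.
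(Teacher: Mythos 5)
Your argument is substantively correct and follows the same skeleton as the paper: the least singular value of~$L$ is $1$-Lipschitz, so $\|\ud\kappa\| \le \kappa^2 \tn{\ud L}$; each row of~$L$ decomposes into a rotational part bounded by $\|\dot u_i\|_u$ via Lemma~\ref{lem:riem-submersion-proj} and a curvature part bounded by $2\gamma_i$ times the size of the residual motion; then one sums in $\ell^2$ and uses $\hat\gamma/\kappa = (\sum_i\gamma_i^2)^{1/2}$. The difference in presentation is that you differentiate $L_i = u_ih_i(y_i)$ through $y_i = u_i^{-1}x$, whereas the paper splits $\ud L_i(\dot u_i,\dot x) = \ud L_i(\dot u_i, \dot u_ix) + \ud L_i(0, \dot x - \dot u_ix)$ and treats the second term as a vector in $T_x\Proj$; unwinding $\dot y_i$ shows these are the same decomposition in two coordinate systems.

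One point of emphasis is misleading. You present the Euler cancellation $\ud^2_{y_i}f_i(y_i,\cdot) = (d_i-1)\ud_{y_i}f_i$ as the ``key refinement,'' claiming that without it a spurious $\sqrt 2\|\dot u_i\|_u$ would spoil the $2\sqrt 2<3$ margin. But the paper reaches the same $2\sqrt 2$ constant without invoking the Euler relation at all: because the argument of $\ud L_i(0,\cdot)$ is a Fubini--Study tangent vector at~$x$, the relevant quantity is the projection $\pi_{x^\perp}(\dot u_ix)$, which satisfies $\|\pi_{x^\perp}(\dot u_ix)\| \le \|\dot u_i\|_u$ by the Riemannian submersion lemma. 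Your Euler identity (plus phase-equivariance of~$h_i$ for the $i\bR y_i$ direction) is a different justification for exactly that same projection; it is not an idea that the paper's proof is missing.

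Finally, a small error in the last step: Lemma~\ref{lem:gamma-lower-bound} does \emph{not} give $\gamma_i \ge 1/2$ for every~$i$ when $D\ge 2$; it gives $\gamma(f_i,x)\ge(d_i-1)/2$, which is zero when $d_i=1$. Hence $\hat\gamma\ge\kappa\sqrt n/2$ is wrong in general. All one can say is that \emph{some} index~$i$ with $d_i=D$ has $\gamma_i\ge(D-1)/2\ge 1/2$, which yields $\hat\gamma\ge\kappa/2$ — exactly what you use next, so the conclusion $\kappa^2+3\kappa\hat\gamma\le5\kappa\hat\gamma$ stands, but the intermediate inequality should be corrected.
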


\begin{proof}
  The second inequality follows from the first one: If~$D \geq 2$ then at least
  one~$\gamma(u_i \cdot f, x)$ is greater or equal to~$\frac{D-1}{2}$, by
  Lemma~\ref{lem:gamma-lower-bound}. It follows that~$\kappa \leq 2 \hat\gamma$
  and then~$\kappa^2 + 3 \kappa \hat\gamma \leq 5 \kappa\hat\gamma$.
  
  To prove the first inequality, we first remark that~$1/\kappa(\bfu,x)$ is a
  Lipschitz-continuous function of~$L(\bfu, x)$ with constant~$1$. Indeed,
  $1/\kappa(\bfu,x)$ is the least singular value of~$L(\bfu, x)$ as a matrix, see
  Equation~\eqref{eq:10}, and the Eckart--Young theorem expresses this number as the
  distance to the set of singular matrices, which is a Lipschitz continuous
  function with constant~$1$. Moreover~$\ud \kappa = - \kappa^2 \ud
  \frac{1}{\kappa}$, so it is enough to prove that~$\tn{\ud L}$ is bounded by~$1
  + 3 \frac{\hat\gamma}{\kappa}$.
  
  Let~$L_i(u_i,z) \in \Proj$ be the $i$th component of~$L(\bfu,z)$, that is the
  projective class of the linear form~$\ud_x(u_i\cdot f_i)$.
  The tangent space of~$\Proj$ at~$L_i(u_i, z)$ is isometrically identified with the
  quotient~$\bC^{n+1}/ \bC \cdot L_i(u_i,z)$.
  Denoting~$h_i \eqdef u_i\cdot f_i$, we check that, at a point~$(\mathbf u, x)$,
  \[ \ud L_i(0, \dot x) = \frac{1}{\|\ud_x h_i\|} \ud_x^2 h_i(\dot x)
    \mod{L_i(u_i,z)},\]
  and in particular, $\|\ud L_i(0, \dot x)\| \leq 2 \gamma(h_i,x) \|\dot x\|$
  for any~$\dot x \in T_x\Proj$. Besides, $L_i(u_i,u_ix) = L_i(\mop{id},x)\circ
  u_i^*$, which is a $1$-Lipschitz continuous function of~$u_i$
  (Lemma~\ref{lem:riem-submersion-proj} applied to the dual projective space).
  This proves that $\|\ud L_i(\dot u_i, \dot u_i x)\| \leq \|\dot u_i\|_u$.
  Therefore,
  \begin{align*}
    \| \ud L_i(\dot u_i, \dot x) \|
    &\leq \|\ud L_i(\dot u_i,\dot u_i x) \| + \|\ud L_i(0, \dot x - \dot u_i x) \|\\
    &\leq \|\dot u_i\|_u + 2\gamma(h_i,x) \left( \|\dot x\| + \|\dot u_i\|_u \right)\\
    &\leq \|\dot u_i\|_u + 2\sqrt{2} \gamma(h_i,x) \left( \|\dot \bfu\|_u^2 + \|\dot x\|^2 \right)^{\frac12},
  \end{align*}
  and then, by the triangle inequality,
  \begin{align*}
    \left\| \ud L(\dot\bfu,\dot x) \right \|
    %&= \bigg( \sum_{i=1}^n \|\ud L_i(\dot u_i,\dot x)\|^2 \bigg)^\frac12 \\
    & \leq \|\dot \bfu\|_u + 2\sqrt{2} \bigg(\sum_i \gamma(h_i,x)^2 \bigg)^{\frac{1}{2}} \left( \|\dot \bfu\|_u^2 + \|\dot x\|^2 \right)^{\frac12}\\
    & \leq \left(1+3 \frac{\hat\gamma(\bfu;x)}{\kappa(\bfu,x)} \right) \left( \|\dot \bfu\|_u^2 + \|\dot x\|^2 \right)^{\frac12},
  \end{align*}
  which concludes the proof.
\end{proof}

We now derive a bound for~$\|\ud \hat \gamma\|$.

% \begin{lemma}
%   For any~$f\in H_d$,
%   the map~$u \in U(n+1) \mapsto u\cdot f \in H_d$
%   is Lipschitz continuous with constant at most~$\sqrt{2} d \|f\|$.
%   \label{lem:unit-composition}
% \end{lemma}

% \begin{proof}
%   Let~$\phi :\cU \to H_d$ be the map~$u\mapsto u\cdot f$. It suffices to prove
%   that the norm of the derivative of~$\phi$ is bounded everywhere by~$\sqrt{2} d
%   \|f\|$. By unitary invariant, is it enough to check it at~$\Id$.

%   Let~$S$ be a skew-Hermitian matrix. It is straightforward to check that
%   $\ud_\Id \phi(S) = \ud_xf(Sx)$. Besides, we can write~$S = u T u^{-1}$ for
%   some diagonal matrix~$T$ and some~$u\in U(n+1)$, and we remark that
%   \begin{equation*}
%     \|d_xf(Sx)\| = \|\ud_{ux} f(u T x)\| = \| \ud_x(f\circ u)(T x) \|,
%   \end{equation*}
%   using the invariance of Weyl's inner product under the action of $U(n+1)$.
%   If we write~$f\circ u =\sum_{|{\mathbf a}|= d}c_{\mathbf a} x^{\mathbf a}$, we easily compute that
%   \[ 
%     \ud_x(f\circ u)(T x) = \sum_{|{\mathbf a}|= d}
%     \left( \sum_{i=0}^{n} a_i t_i\right)c_{\mathbf a}  X^{\mathbf a},
%   \]
%   where~$t_0,\dotsc,t_n$ are the diagonal coefficients of~$T$. Because
%   $\sum_{i=0}^n a_i = d$ and~$\max_i |t_i| = \tn{S} \leq \sqrt{2}\|{S}\|_u$, it
%   follows that~$\|d_xf(Sx)\| \leq d\cdot \max_i \abs{t_i} \cdot \|f \circ u\|
%   \leq \sqrt{2} \|S\|_u$, this concludes the proof.
% \end{proof}

\begin{lemma}
  \label{prop:gamma-single-lip}
  For any homonegeneous polynomial~$f:\bC^{n+1} \to \bC$
  the map
  \[ (u,x)\in U(n+1)\times\Proj \longmapsto \frac{1}{\gamma(u\cdot f, x)} \]
  is Lipschitz continuous with constant at most~$5\sqrt{2}$.
\end{lemma}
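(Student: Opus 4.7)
The plan is to reduce the claim to the already-established Lipschitz property of $1/\gamma(F,\cdot)$ on $\Proj$ (Proposition~\ref{prop:gamma-lip}) by exploiting the unitary invariance of the gamma number.

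First, I would observe that $\gamma(u\cdot f, x) = \gamma(f, u^{-1}x)$. Indeed, $(u\cdot f)(y) = f(u^{-1}y)$, so by the chain rule
\[ \ud_x^k(u\cdot f)(y_1,\dotsc,y_k) = \ud_{u^{-1}x}^k f(u^{-1}y_1,\dotsc,u^{-1}y_k), \]
and since $u^{-1}$ is an isometry of $\bC^{n+1}$, both $\|\ud_x(u\cdot f)\|$ and $\tn{\ud_x^k(u\cdot f)}$ equal their counterparts at $u^{-1}x$. The formula~\eqref{eq:15} for the gamma number of a single polynomial then gives equality.

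Next, I would show that the map $\psi : (u,x)\in U(n+1)\times \Proj \mapsto u^{-1}x \in \Proj$ is Lipschitz with constant~$\sqrt{2}$. For fixed $u$, the map $x\mapsto u^{-1}x$ is an isometry of $\Proj$ with the Fubini--Study metric. For fixed $x$, the map $u \mapsto u^{-1}x$ factors as $u\mapsto u^{-1}\mapsto u^{-1}x$: the first step is an isometry of $U(n+1)$, and the second is the Riemannian submersion of Lemma~\ref{lem:riem-submersion-proj}, hence $1$-Lipschitz. For arbitrary points $(u_1,x_1)$ and $(u_2,x_2)$, the triangle inequality yields
\[ d_\bP(u_1^{-1}x_1, u_2^{-1}x_2) \leq d_\bP(x_1,x_2) + d_{U(n+1)}(u_1, u_2), \]
and Cauchy--Schwarz bounds the right-hand side by $\sqrt{2}\bigl(d_\bP(x_1,x_2)^2 + d_{U(n+1)}(u_1,u_2)^2\bigr)^{1/2}$, which is $\sqrt{2}$ times the Riemannian product distance on $U(n+1)\times\Proj$.

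Finally, I would compose: by Proposition~\ref{prop:gamma-lip} (reinterpreted in its integrated form, i.e.\ $1/\gamma(f,\cdot)$ is Lipschitz on $\Proj$ with constant $5$), and by the Lipschitz property of $\psi$ just established, the function
\[ (u,x) \longmapsto \frac{1}{\gamma(u\cdot f, x)} = \frac{1}{\gamma(f, \psi(u,x))} \]
is the composition of two Lipschitz maps and hence is Lipschitz with constant at most $5\sqrt{2}$, as claimed. No step here looks delicate; the only thing to double-check is that the scaling $\|\cdot\|_u$ on tangent spaces of $U(n+1)$ (used to make Lemma~\ref{lem:riem-submersion-proj} a genuine Riemannian submersion) is consistent with the product metric used to measure the Lipschitz constant.
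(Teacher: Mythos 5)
Your proposal is correct and follows essentially the same route as the paper: use unitary invariance $\gamma(u\cdot f,x)=\gamma(f,u^*x)$, note that $(u,x)\mapsto u^*x$ is $\sqrt{2}$-Lipschitz on the product (being $1$-Lipschitz in each variable separately), and compose with the $5$-Lipschitz function $1/\gamma(f,\cdot)$ from Proposition~\ref{prop:gamma-lip}. Your explicit factoring through the isometry $u\mapsto u^{-1}$ just spells out the step the paper leaves implicit when citing Lemma~\ref{lem:riem-submersion-proj} for the $u$-Lipschitz bound.
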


\begin{proof}
  The $\gamma$ number is invariant under unitary transformations, that is
  $\gamma(u\cdot f, x) = \gamma(f, u^* x)$. Moreover, the map~$(u,x)\in
  U(n+1)\times \Proj\mapsto u^* x\in \Proj$ is $1$-Lipschitz continuous with respect
  to~$u$ (Lemma~\ref{lem:riem-submersion-proj}) and to~$x$, thus it
  is~$\sqrt{2}$-Lipschitz continuous on~$U(n+1)\times \Proj$.
  Since~$1/\gamma(f,x)$ is $5$-Lipschitz continous with respect to~$x$
  (Proposition~\ref{prop:gamma-lip}), the map~$1/\gamma(f, u^* x)$ is
  $5\sqrt{2}$-Lipschitz continuous on~$U(n+1)\times \Proj$.
\end{proof}

\begin{lemma}
  \label{lemma:gammas-lip}
    On $\cU\times \Proj$, $\left\| \ud\hat\gamma\right\| \leq 13\, \hat\gamma^2$.
    Equivalently, $1/\hat\gamma$ is 13-Lipschitz continuous.
\end{lemma}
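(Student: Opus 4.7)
The plan is to decompose $\hat\gamma = \kappa\cdot g$, where
\[
g(\bfu, x) \eqdef \Big( \sum_{i=1}^n \gamma(u_i\cdot f_i, x)^2 \Big)^{1/2},
\]
and then apply the product rule $\|\ud \hat\gamma\| \leq g \|\ud \kappa\| + \kappa \|\ud g\|$, bounding each factor by a dedicated step. The bound on $\|\ud \kappa\|$ is essentially free: Lemma~\ref{lemma:kappa-lip} already gives $\|\ud \kappa\| \leq 5\kappa\hat\gamma$ in the meaningful regime $D\geq 2$ (the case $D=1$ is trivial because then every $\gamma_i$ vanishes and $\hat\gamma\equiv 0$). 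The real work is the new bound $\|\ud g\| \leq 5\sqrt{2}\, g^2$.

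For that, I would set $\gamma_i \eqdef \gamma(u_i\cdot f_i, x)$ and differentiate $g^2 = \sum_i \gamma_i^2$, obtaining $g\,\ud g = \sum_i \gamma_i\,\ud \gamma_i$. Since $\gamma_i$ depends only on $(u_i,x)$, the gradient $\ud \gamma_i$ sees only $(\dot u_i,\dot x)$. Because $\gamma_i(u_i,x) = \gamma(f_i, u_i^* x)$ factors through the $1$-Lipschitz map $u_i\mapsto u_i^* x$ (dual of Lemma~\ref{lem:riem-submersion-proj}), Lemma~\ref{prop:gamma-single-lip} yields $\|\ud \gamma_i\| \leq 5\sqrt{2}\,\gamma_i^2$ on $U(n+1)\times\Proj$. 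Plugging in, noting that $\|(\dot u_i,\dot x)\| \leq \|v\|$, and using the elementary inequality $\sum_i \gamma_i^3 \leq (\sum_i \gamma_i^2)^{3/2} = g^3$ (which follows from $\gamma_j \leq g$ for each $j$), one gets
\[
g\,|\ud g(v)| \leq \sum_i \gamma_i\,|\ud \gamma_i(v)| \leq 5\sqrt{2}\,\|v\|\sum_i \gamma_i^3 \leq 5\sqrt{2}\,g^3\,\|v\|,
\]
so $\|\ud g\| \leq 5\sqrt{2}\,g^2$.

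Combining the two bounds with $\kappa g = \hat\gamma$ gives
\[
\|\ud \hat\gamma\| \leq 5\,g\,\kappa\,\hat\gamma + 5\sqrt{2}\,\kappa\,g^2 = 5\,\hat\gamma^2 + 5\sqrt{2}\,\hat\gamma\,g,
\]
and since $\kappa \geq 1$ (Lemma~\ref{lem:kappa-geq-1}) we have $g = \hat\gamma/\kappa \leq \hat\gamma$, whence $\|\ud \hat\gamma\| \leq 5(1+\sqrt 2)\,\hat\gamma^2 < 13\,\hat\gamma^2$.

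The main obstacle is the tightness of the constant: $5(1+\sqrt 2)\approx 12.07$ leaves very little slack before $13$, so any wasteful Cauchy--Schwarz step (for instance picking up a factor $\sqrt n$ by summing $n$ copies of $\|\dot x\|^2$ independently) would immediately destroy the estimate. One must also keep in mind the subtlety that $\hat\gamma$, $\kappa$ and the $\gamma_i$ are not classically differentiable everywhere; as explained after Proposition~\ref{prop:gamma-lip}, all of the ``derivative'' bounds above are read in the limsup sense, and the product-rule step is justified in that weaker sense, as is routine for quotients and products of Lipschitz functions.
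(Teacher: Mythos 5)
Your proof is correct and follows essentially the same route as the paper: reduce to the case $D\geq 2$, factor $\hat\gamma = \kappa\cdot g$ with $g = (\sum_i\gamma_i^2)^{1/2}$, apply the product rule, bound $\|\ud\kappa\|\leq 5\kappa\hat\gamma$ by Lemma~\ref{lemma:kappa-lip} and $\|\ud g\|\leq 5\sqrt{2}\,g^2$ via Lemma~\ref{prop:gamma-single-lip} and $\sum_i\gamma_i^3\leq g^3$, then close with $\kappa\geq 1$; the paper works with $\eta=g^2$ instead of $g$, but that is a purely cosmetic difference. You even arrive at the same intermediate constant $5(1+\sqrt 2)\approx 12.07$ that the paper rounds up to~$13$.
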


\begin{proof}
  We may assume that~$D \geq 2$, otherwise~$\hat\gamma$ is identically~$0$.
  Let~$\gamma_i(\bfu, x) \eqdef \gamma(u_i\cdot f_i, x)$
  and~$\eta \eqdef \sum_i \gamma_i^2$, so that~$\hat\gamma = \kappa \sqrt{\eta}$.
  By Lemma~\ref{prop:gamma-single-lip}, $\|\ud\gamma_i\| \leq 5 \sqrt{2}
  \gamma_i^2$ and then
  \[ \|\ud \eta\| \leq 2 \sum_i \gamma_i \|\ud \gamma_i\| \leq 10 \sqrt{2} \sum_i \gamma_i^3 \leq 10 \sqrt{2} \eta^{3/2}. \]
  Therefore,
  \begin{align*}
    \left\|\ud\hat\gamma\right\| &\leq \|\ud \kappa \| \sqrt{\eta} + \tfrac12 \kappa \eta^{-\frac12} \|\ud \eta\| \\
                                 &\leq 5 \hat\gamma^2 + 5 \sqrt{2} \hat\gamma \sqrt{\eta},
                                 && \text{by Lemma~\ref{lemma:kappa-lip},}\\
                                 &\leq 13 \hat\gamma^2, &&\text{using~$\kappa \geq 1$, Lemma~\ref{lem:kappa-geq-1}.} \qedhere
  \end{align*}
\end{proof}

% \begin{lemma}
%   \label{lem:lip-corollaries}
%   Let~$p$ and~$q\in \cU\times \bP$ and let $\delta$ denote~$d_{\cU\times \Proj}(p,q)$.
%   If~$11\delta\hat\gamma(p) < 1$, then
%   \begin{enumerate}[(i)]
%   \item
%     $\displaystyle \frac{\hat\gamma(p)}{1+11\delta\hat\gamma(p)} \leq
%     \hat\gamma(q) \leq \frac{\hat\gamma(p)}{1-11\delta\hat\gamma(p)}$
%   \item
%     $\displaystyle \kappa(p) \exp\left( -\frac{4 \delta
%         \hat\gamma(p)}{1-11\delta\hat\gamma(p)} \right) \leq \kappa(q) \leq
%     \kappa(p) \exp\left( \frac{4 \delta
%         \hat\gamma(p)}{1-11\delta\hat\gamma(p)} \right)$.
%   \end{enumerate}
% \end{lemma}

% \begin{proof}
%   The first claim follows from Lemma~\ref{lemma:gammas-lip}:
%   \begin{equation*}
%     \left| \frac{1}{\hat\gamma(p)}-\frac{1}{\hat\gamma(q)} \right| \leq 11 \delta.
%   \end{equation*}
%   Next, Lemma~\ref{lemma:kappa-lip} and the first point show
%   that~$\log\kappa$ is Lipschitz-continuous on the ball of
%   radius~$\delta$ around~$p$ with constant at
%   most~$\frac{4\hat\gamma(p)}{1-11\delta\hat\gamma(p)}$. This prove the
%   second point.
% \end{proof}

\subsection{Numerical continuation along rigid paths}
\label{sec:yet-anoth-cont}

We describe a continuation algorithm in the rigid solution variety and bound its
complexity in terms of the integral of~$\kappa \hat \gamma$ along the
continuation path. It is the analogue of the $\mu^2$ estimate of the classical
theory, see~\S \ref{sec:state-art}. The approach proposed here differs
from the usual treatment only in a more systematic use of derivatives.
We assume~$D \geq 2$ as otherwise there is only a
linear system of equations to solve.

As we will see in \S \ref{sec:aver-compl-dense},
it may be valuable not to compute $\hat \gamma$ but rather an
easier to compute upper bound. That is why the algorithm is described in terms of a
function~$g : \cU \times \Proj \to (0,\infty]$ that can be chosen freely, as long as:
\begin{enumerate}[(H1)]
\item \label{item:1} $\hat\gamma \leq g$, on $\cU
  \times \Proj$;
\item \label{item:2} $\frac{1}{g}$ is $C$-Lipschitz continuous, for
  some~$C \geq 10$.
\end{enumerate}

The following proposition describes one continuation step. Observe that the
conclusion~(ii) concerning the triple~$(\bfv, \zeta', z')$ is similar to the hypothesis~(a)
concerning~$(\bfu, \zeta, z)$, so that we can chain the continuation steps.
The geodesic distance in~$\cU$ is denoted~$d_\cU$.

\begin{proposition}\label{prop:homotopy-step}
  Let~$\bfu,\bfv\in \cU$, let $\zeta$ be a zero of the polynomial system~$\bfu\cdot F$, let~$z\in\Proj$ and let
  $z' \eqdef \cN_{\bfv\cdot F}(z)$. For any positive real number~$A\leq \frac{1}{4 C}$, if
  \begin{enumerate}[(a)]
  \item $d_\bP(z,\zeta) g(\bfu,\zeta) \leq A$, and
  \item $d_\cU(\bfu, \bfv) \kappa(\bfu, z) g(\bfu, z) \leq \frac 1{4} A$,
  \end{enumerate} 
  then there exists a unique zero~$\zeta'$ of\/~$\bfv \cdot F$ such that
  \begin{enumerate}[(i)]
  \item $z$ is an approximate zero of~$\bfv \cdot F$ with associated zero~$\zeta'$, and
  \item $d_\bP(z',\zeta') g(\bfv, \zeta') \leq A$.
  \end{enumerate}
\end{proposition}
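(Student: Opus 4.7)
The plan is to lift the geodesic from $\bfu$ to $\bfv$ in $\cU$ to a path in $\cV$ starting at $(\bfu,\zeta)$ and ending at $(\bfv,\zeta')$, where $\zeta'$ is the sought zero, then to verify the two conclusions by combining this lifting with the gamma theorem (Theorem~\ref{thm:gamma}) and the quadratic convergence of Newton's operator. Let $\bfu_t$, $t\in[0,1]$, denote the minimizing geodesic from $\bfu$ to $\bfv$, so $\|\dot\bfu_t\|_u = d_\cU(\bfu,\bfv) =: \delta$. Using the path lifting of \S\ref{sec:path-lifting}, I first define $(\bfu_t,\zeta_t)$ as the continuous lift in $\cV$ starting from $(\bfu,\zeta)$, and I set $\zeta' \eqdef \zeta_1$.

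Before showing this lift actually reaches $t=1$, the key step is a bootstrap argument that controls $g(\bfu_t,\zeta_t)$ along the path. Applying Lemma~\ref{lem:mu-as-kappa} I get $\|\dot\zeta_t\| \leq \kappa(\bfu_t,\zeta_t)\delta$, hence $\|(\dot\bfu_t,\dot\zeta_t)\|\leq \sqrt{1+\kappa^2}\,\delta$. Since $D\geq 2$, Lemma~\ref{lem:gamma-lower-bound} gives $\gamma(u_i\cdot f_i,x) \geq \tfrac{1}{2}$, so $\hat\gamma \geq \tfrac{1}{2}\kappa$ and therefore $\kappa \leq 2\hat\gamma \leq 2g$. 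Combining this with hypothesis (H\ref{item:2}) that $1/g$ is $C$-Lipschitz, one obtains a Gronwall-type differential inequality for $1/g(\bfu_t,\zeta_t)$. Now hypothesis (a) together with (H\ref{item:2}) lets me compare $g(\bfu,z)$ with $g(\bfu,\zeta)$: since $d_\bP(z,\zeta)\,g(\bfu,\zeta) \leq A \leq 1/(4C)$, we have $Cd_\bP(z,\zeta)g(\bfu,\zeta) \leq 1/4$, so these values agree up to a factor $4/3$. A similar comparison holds for $\kappa(\bfu,\cdot)$ via Lemma~\ref{lemma:kappa-lip}. Hypothesis (b) then translates into $\delta\,\kappa(\bfu,\zeta)g(\bfu,\zeta) = O(A)$, and this smallness is exactly what closes the bootstrap: for all $t\in[0,1]$, $g(\bfu_t,\zeta_t)$ and $\kappa(\bfu_t,\zeta_t)$ stay within a small constant factor of their values at $(\bfu,\zeta)$. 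In particular the lift never hits $\Sigma$ and $\zeta'$ is well defined.

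Integrating the bound on $\|\dot\zeta_t\|$ then yields
\[
 d_\bP(\zeta,\zeta') \leq \delta\int_0^1 \kappa(\bfu_t,\zeta_t)\,dt = O(\delta\,\kappa(\bfu,\zeta)),
\]
and multiplying by $g(\bfv,\zeta')$ (comparable to $g(\bfu,\zeta)$ by the same Lipschitz argument applied along the whole path) gives $d_\bP(\zeta,\zeta')\,g(\bfv,\zeta') = O(A)$. Combined with hypothesis (a) via the triangle inequality, this produces $d_\bP(z,\zeta')\,g(\bfv,\zeta') = O(A)$. Choosing the constants absorbed in the $O(\cdot)$ small enough—this is where the factor $\tfrac14$ in (b) and the condition $A \leq \tfrac{1}{4C}$ do their work—ensures $d_\bP(z,\zeta')\,g(\bfv,\zeta') \leq \tfrac16$. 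Since $\gamma(\bfv\cdot F,\zeta') \leq \hat\gamma(\bfv,\zeta') \leq g(\bfv,\zeta')$ by Theorem~\ref{thm:gamma-leq-sgamma} and (H\ref{item:1}), Theorem~\ref{thm:gamma} applies and shows $z$ is an approximate zero of $\bfv\cdot F$ with associated zero $\zeta'$, proving (i). Uniqueness of $\zeta'$ is the standard fact that an approximate zero determines its associated exact zero uniquely.

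For (ii), the definition of approximate zero gives $d_\bP(z',\zeta') \leq \tfrac12 d_\bP(z,\zeta')$, hence $d_\bP(z',\zeta')\,g(\bfv,\zeta') \leq \tfrac12 d_\bP(z,\zeta')\,g(\bfv,\zeta') \leq A$, provided the constants in the bootstrap are tuned so that the right-hand side of the previous paragraph is at most $2A$ rather than just $O(A)$. The main obstacle is precisely this constant bookkeeping across the two Lipschitz comparisons (for $g$ and for $\kappa$) and the Gronwall-type bound: one must check that the numerical factors accumulated along the bootstrap do not exceed what the hypotheses $A\leq \tfrac{1}{4C}$ and $\delta\kappa(\bfu,z)g(\bfu,z) \leq \tfrac14 A$ afford, and in particular that the $\tfrac14$ in (b) is sufficient. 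Everything else is routine integration and an application of Theorem~\ref{thm:gamma}.
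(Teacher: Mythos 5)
Your overall strategy is the same as the paper's — lift the geodesic to a path $(\bfu_t,\zeta_t)$ in $\cV$, establish a Gronwall-type differential inequality controlling $g$ and $\kappa$ along the lift, and invoke Theorem~\ref{thm:gamma} once $d_\bP(z,\zeta')\,g(\bfv,\zeta')$ is small enough. The structural ingredients (Lemma~\ref{lem:mu-as-kappa} for $\|\dot\zeta_t\|\leq\kappa\,\|\dot\bfu_t\|_u$, the Lipschitz bound on $1/g$, $\kappa\leq 2\hat\gamma$ from Lemma~\ref{lem:gamma-lower-bound}, comparing the values at $(\bfu,z)$ and $(\bfu,\zeta)$) are all correctly identified.

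However there is a genuine gap, and it is the one you yourself flag at the end: the constant bookkeeping is not done, and this bookkeeping is the entire substance of the proposition. The statement is not ``for some sufficiently small $A$ and step size''; it is the assertion that the specific thresholds $A\leq\tfrac1{4C}$ and $d_\cU(\bfu,\bfv)\,\kappa(\bfu,z)g(\bfu,z)\leq\tfrac14 A$ are sufficient, and these exact constants then propagate into Algorithm~\ref{algo:hc} (the step size $1/(16C\kappa g)$) and into the bound of Theorem~\ref{thm:complexity-numcont}. Replacing the estimates by $O(\cdot)$ statements, as you do in $d_\bP(\zeta,\zeta')g(\bfv,\zeta')=O(A)$ and $d_\bP(z,\zeta')g(\bfv,\zeta')=O(A)$, simply re-states the target and does not prove it. Moreover, your plan of bounding $d_\bP(z,\zeta')$ by the triangle inequality $d_\bP(z,\zeta)+d_\bP(\zeta,\zeta')$ and then multiplying through by $g(\bfv,\zeta')$ loses precision compared with the paper's device, which works directly with the quantity $\beta_t\eqdef d_\bP(z,\eta_t)\,g(\bfv_t,\eta_t)$, derives a single ODE bound $\dot\beta_t\leq(1+2C\beta_t)\kappa_t g_t$, and integrates it against the explicit bound $\kappa_t g_t\leq\kappa_0 g_0/(1-3Ct\kappa_0 g_0)$ to obtain $\beta_t\leq 2A$ in closed form. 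Your approach requires separately controlling the ratios $g(\bfv,\zeta')/g(\bfu,\zeta)$ and $\kappa_t/\kappa_0$ and then recombining them; without carrying that out, one cannot tell whether the $\tfrac14$ in (b) survives the accumulated factors, and there is a real risk it would not without the sharper $\beta_t$ argument.

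One further detail worth noting: the comparison between $\kappa(\bfu,z)g(\bfu,z)$ and $\kappa(\bfu,\zeta)g(\bfu,\zeta)$ is not a plain Lipschitz estimate for $\kappa$ alone, because Lemma~\ref{lemma:kappa-lip} bounds $\|\ud\kappa\|$ by $5\kappa\hat\gamma$, not by a constant times $\kappa^2$. The paper instead integrates $\|\ud\log(\kappa g)\|\leq\tfrac32 Cg$ along a shortest path from $(\bfu,\zeta)$ to $(\bfu,z)$, controlling $g$ on that path via $C$-Lipschitz continuity of $1/g$; this yields the exponential factor $\exp\bigl(\tfrac32 CA/(1-CA)\bigr)$ which you would also need to account for explicitly.
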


\begin{proof}
  It suffices to construct a zero~$\zeta'$ of~$\bfv\cdot F$ such that~$d_\bP(z, \zeta') g(\bfv, \zeta') \leq 2A$.
  Indeed, since~$2A \leq \frac16$ and $\gamma\leq \hat\gamma \leq g$, it would imply by
  Theorem~\ref{thm:gamma} that: (i)~$z$ is an approximate zero of~$\bfv \cdot F$; and~(ii)
  that
  \[ d_\bP(z',\zeta')g(\bfv, \zeta') \leq \tfrac12 d_\bP(z,\zeta')g(\bfv, \zeta')
    \leq A. \]
  
  Consider a geodesic~$t \in [0,\infty) \mapsto \bfv_t\in\cU$ such
  that~$\|\dot \bfv_t\|=1$, $\bfv_0 = \bfu$ and $\bfv_{d_\cU(\bfu, \bfv)} =
  \bfv$.
  We may assume that~$\kappa(\bfu, \zeta) < \infty$, otherwise~$\bfu = \bfv$, by
  Hypothesis~(b),
  and there is nothing to prove.
  So~$(\bfu, \zeta)$ is not in the singular locus~$\Sigma$ and 
  for~$t$ small enough, there is a unique lift~$(\bfv_t, \eta_t)$ in~$\cV
  \setminus \Sigma$, see~\S \ref{sec:path-lifting}.
  Let~$[0, \tau)$,
  with~$0 < \tau \leq \infty$, be the maximal interval of definition of~$\eta_t$.
  
  For~$t \in [0,\tau)$, let~$p_t
  \eqdef (\bfv_t,\eta_t)$ and let~$\delta_t \eqdef d_\bP(z,\eta_t)$. Let
  moreover $g_t \eqdef g(\bfv_t, \eta_t)$, $\beta_t \eqdef g_t
  \delta_t$ and~$\kappa_t \eqdef \kappa(\bfv_t, \eta_t)$. The derivative with
  respect to~$t$ is denoted with a dot.
  
  We first observe that~$\dot\delta_t \leq \kappa_t$, by
  Lemma~\ref{lem:mu-as-kappa}, and that
  \begin{equation}\label{eq:30}
    \|\dot p_t\| \leq (1 + \dot\delta_t^2)^{\frac12} \leq 2 \kappa_t
  \end{equation}
  (using~$\kappa \geq 1$, by Lemma~\ref{lem:kappa-geq-1}). By
  Lemma~\ref{lemma:kappa-lip}, $\dot\kappa_t \leq 5 \kappa_t g_t \|\dot p_t\| \leq 10
  \kappa_t^2g_t$ and by the Lipschitz hypothesis on~$1/g$, we have $\dot g_t \leq C g_t^2 \| \dot
  p_t\| \leq 2C \kappa_t g_t^2$. This implies that $\tfrac{\ud}{\ud t} \kappa_t g_t \leq
  3 C ( \kappa_t g_t )^2$ (using~$C \geq 10$) and equivalently,
  \begin{equation}\label{eq:43}
    \frac{\ud}{\ud t} \frac{1}{\kappa_t g_t} \geq -3 C.
  \end{equation}
  It follows, after integration, that $\frac{1}{\kappa_t g_t} \geq
  \frac{1}{\kappa_0 g_0} - 3Ct$, and thus
  \begin{equation}\label{eq:7}
    \kappa_t g_t \leq \frac{\kappa_0 g_0}{1-3 C t \kappa_0 g_0},
  \end{equation}
  for any~$t \in [0, \tau)$ such that~$1-3 Ct\kappa_0g_0 > 0$.
  
  A first consequence of~\eqref{eq:7} is that
  \begin{equation}\label{eq:31}
    \tau \geq (3C\kappa_0 g_0)^{-1}.
  \end{equation}
  Indeed, assuming that $\tau < (3C\kappa_0 g_0)^{-1}$,
  Inequalities~\eqref{eq:30}, \eqref{eq:7} and $g_t \geq \frac12$
  (Lemma~\ref{lem:gamma-lower-bound}) show that~$\|\dot p_t\|$ is bounded
  on~$[0, \tau)$. Therefore~$p_t$ has a limit as~$t \to \tau$,
  and then $\eta_t$ is defined on the interval~$[0, \tau]$.
  But~$g_\tau < \infty$, by~\eqref{eq:7} with~$t\to\tau$, therefore $\eta_\tau$
  is not a singular zero of~$\bfv_\tau \cdot F$ and~$\eta_t$ can be continued
  in a neighborhood of~$\tau$ contradicting that~$\tau$ is maximal.
  
  Next, we compute that
  \begin{align*}
    \dot\beta_t &= g_t \dot\delta_t + \dot g_t \delta_t\\
                &\leq g_t \kappa_t + 2C \kappa_t g_t^2 \delta_t \\
                &= (1 + 2C \beta_t) \kappa_t g_t,
  \end{align*}
  then
  \begin{align*}
    \frac{\ud}{\ud t} \log(1 + 2C \beta_t) &\leq 2 C \kappa_t g_t \\
                                           &\leq \frac{2 C \kappa_0 g_0}{1-3 C t \kappa_0 g_0}, && \text{by \eqref{eq:7},} \\
                                           &= - \frac23 \frac{\ud}{\ud t} \log(1-3 Ct \kappa_0g_0),
  \end{align*}
  and therefore, after
  integration with respect to~$t$,
  \begin{equation}\label{eq:28}
    \log\left( \frac{1+ 2C\beta_t}{1+2C\beta_0}\right)
    \leq  \frac23 \log\left( \frac{1}{1-3C t \kappa_0g_0} \right).
  \end{equation}
  Exponentiating both sides leads to
  \begin{equation}\label{eq:5}
    \beta_t \leq \frac{1 + 2C \beta_0}{2C\left( 1-3C t\kappa_0g_0 \right)^{\frac23}} - \frac{1}{2C}.
  \end{equation}

  We now bound~$\kappa_0 g_0$. As a function on~$\cU\times \Proj$, we compute
  that
  \begin{equation}
    \begin{aligned}[t]
      \|\ud (\kappa g)\| &\leq \|\ud\kappa\| g + \|\ud g\| \kappa \\
      &\leq 5\kappa \hat \gamma g + C g^2 \kappa, \text{ by Lemma~\ref{lemma:kappa-lip} and $C$-Lipschitz continuity of $\frac1g$,} \\
      &\leq \tfrac32 C \kappa g^2, \quad \text{because $\hat \gamma \leq g$ and~$10 \leq C$, by assumption,} 
    \end{aligned}\label{eq:16}
  \end{equation}
  and then,
  \begin{equation} \label{eq:27}
    \|\ud \log(\kappa g)\| = (\kappa g)^{-1} \|\ud (\kappa g)\| \leq \tfrac32 C g.
  \end{equation}
  Since~$1/g$ is $C$-Lipschitz continuous on~$\cU\times\Proj$, for any~$w \in \Proj$
  on a shortest path between~$z$ and~$\zeta$, $|g(\bfu, w)^{-1} - g(\bfu,
  \zeta)^{-1}| \leq C d_\bP(\zeta,z)$ and then
  \begin{equation}\label{eq:17}
    g(\bfu,w) \leq \frac{g(\bfu,\zeta)}{1-C d_\bP(\zeta,z) g(\bfu,\zeta)}.% d_\bP(z,\zeta).
  \end{equation}
  After integrating the relation \eqref{eq:27} on a path
  from~$(\bfu, \zeta)$ to~$(\bfu, z)$, and bounding
  the right-hand side with~\eqref{eq:17}, we obtain
  \begin{equation*}
    \log(\kappa_0 g_0) \leq \log( \kappa(\bfu, z) g(\bfu, z) ) + \frac{\tfrac32 Cd_\bP(\zeta,z) g(\bfu,\zeta)}{1-C d_\bP(\zeta,z) g(\bfu,\zeta)},
  \end{equation*}
  and then
  \begin{equation}\label{eq:18}
    \kappa_0g_0 \leq \kappa(\bfu,z) g(\bfu,z)
    \exp \left( \frac{\tfrac32 C d_\bP(z,\zeta) g(\bfu,\zeta)}{1-C d_\bP(z,\zeta) g(\bfu,\zeta)} \right).
  \end{equation} 
  We multiply by~$d_\cU(\bfu,\bfv)$ both sides, use the hypotheses~(a) and~(b), and
  obtain
  \begin{equation}\label{eq:29}
    d_\cU(\bfu,\bfv) \kappa_0 g_0 \leq \tfrac14 A \exp \left( \frac{\frac32 C A}{1- C A} \right) < \tfrac{5}{12} A,
  \end{equation}
  where the last inequality follows from the hypothesis $CA \leq \frac{1}{4}$.
  Together with~\eqref{eq:31} and the hypothesis $CA \leq \frac{1}{4}$, we
  deduce $d_\cU(\bfu, \bfv) < \tau$.
  In particular, we can define~$\zeta' = \eta_{d_\cU(\bfu, \bfv)}$, which is a
  zero of~$\bfv\cdot F$, and apply
  Inequality~\eqref{eq:5} at~$t = d_\cU(\bfu,\bfv)$. This gives, in combination
  with~\eqref{eq:29} and Hypothesis~(a),
  \begin{align*}
    d_\bP(z, \zeta') g(\bfv, \zeta')
    &\leq \frac{1 + 2CA}{2C ( 1- \frac54 CA)^{\frac23}} - \frac{1}{2C} \\
    &\leq \left( \frac{1 + \tfrac{2}{4}}{\tfrac{2}{4} (1-\frac{5}{16})^{\frac23}} - \frac{4}{2} \right) A \\
    &\leq 2 A
  \end{align*}
  using again that~$C A\leq \frac1{4}$.
  This concludes the proof.
\end{proof}

\begin{algo}[tp]
  \centering
  \begin{algorithmic}
    \Function{NC}{$F$, $\bfu$, $\bfv$, $z$}
    \State $(\bfw_t)_{0\leq t \leq T} \gets$ a 1-Lipschitz continuous path
    from~$\bfv$ to~$\bfu$
    \State $t\gets {1}/\left(16 C\, \kappa(\bfw_0, z) g(\bfw_0, z)\right)$
    \While{$t < T$}
    \State $z \gets \cN_{\bfw_t}(z)$
    \State $t \gets t + {1}/\left(16 C \, \kappa(\bfw_t, z) g(\bfw_t, z)\right)$        
    \EndWhile
    \State \textbf{return} $z$
    \EndFunction
  \end{algorithmic}
  \caption[]{Numerical continuation
    \begin{description}
      \item[Input.] $F \in\cH[n]$, $\bfu$, $\bfv \in \cU$ and~$z \in \Proj$
      \item[Precondition.] $z$ is a zero of~$\bfv \cdot F$ and the function~$g$
        satisfies~\ref{item:1} and~\ref{item:2}.
      \item[Output.] $w\in\bP^n$.
      \item[Postcondition.] $w$ is an approximate zero of~$\bfu \cdot F$.
    \end{description}
  }
  \label{algo:hc}
\end{algo}

Based on Proposition~\ref{prop:homotopy-step}, the procedure {NC} (Algorithm~\ref{algo:hc}) computes
an approximate zero of a system~$\bfu \cdot F$ given a zero of another
system~$\bfv \cdot F$ using a numerical continuation along a
path~$(\bfw_t)_{0 \leq t \leq T}$ from~$\bfv$ to~$\bfu$.

\begin{theorem}\label{thm:complexity-numcont}
  On input~$F$, $\bfu$, $\bfv$ and~$z$,
  assuming that~$z$ is a zero of\/~$\bfv \cdot F$,
  Algorithm~\ref{algo:hc} ouputs an approximate zero of\/~$\bfu \cdot F$ or loops forever.
  
  If the continuation path~$(\bfw_t)_{0\leq t\leq T}$ 
  chosen by the algorithm lifts as a continous path~$(\bfw_t, \zeta_t)$ in~$\cV$
  with~$\zeta_0 = z$, then
  the the algorithm terminates after at most
  \[ 25 C \int_0^T \kappa(\bfw_t,\zeta_t) g(\bfw_t, \zeta_t) \ud t \]
  continuation steps.
\end{theorem}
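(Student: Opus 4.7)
The plan is to apply Proposition~\ref{prop:homotopy-step} iteratively with the parameter $A \eqdef \frac{1}{4C}$. Assuming the continuous lift $(\bfw_t, \zeta_t)_{0 \leq t \leq T}$ exists with $\zeta_0 = z$, I proceed by induction on the loop iteration: at the beginning of the $k$-th iteration, with current time $t_k$ and current iterate $z_k$, I maintain the invariant that $z_k$ is an approximate zero of $\bfw_{t_k} \cdot F$ associated to~$\zeta_{t_k}$ and that $d_\bP(z_k, \zeta_{t_k})\, g(\bfw_{t_k}, \zeta_{t_k}) \leq A$. Initially, $z_0 = z = \zeta_0$, so the invariant holds trivially.

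For the induction step, the $1$-Lipschitz property of the continuation path gives
\[ d_\cU(\bfw_{t_k}, \bfw_{t_{k+1}}) \leq \Delta t_k = \frac{1}{16 C\,\kappa(\bfw_{t_k}, z_k)\, g(\bfw_{t_k}, z_k)}, \]
so hypothesis~(b) of Proposition~\ref{prop:homotopy-step} holds with $A/4 = \tfrac{1}{16C}$, while hypothesis~(a) is the induction hypothesis. The proposition then yields a unique zero~$\zeta'$ of $\bfw_{t_{k+1}} \cdot F$ approximated by $z_k$ and satisfying $d_\bP(z_{k+1}, \zeta') g(\bfw_{t_{k+1}}, \zeta') \leq A$; by the uniqueness of the continuous lift (\S\ref{sec:path-lifting}), $\zeta' = \zeta_{t_{k+1}}$, and the invariant is preserved. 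When $t$ reaches $T$, $\bfw_T = \bfu$, so the final iterate is an approximate zero of $\bfu \cdot F$, which establishes correctness.

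For the step count, I would show that each iteration contributes at least $\frac{1}{25C}$ to the integral $\int_0^T \kappa(\bfw_t, \zeta_t)\, g(\bfw_t, \zeta_t)\, \ud t$. Set $M_k \eqdef \kappa(\bfw_{t_k}, z_k)\, g(\bfw_{t_k}, z_k)$ and $M(t) \eqdef \kappa(\bfw_t, \zeta_t)\, g(\bfw_t, \zeta_t)$. Two estimates from the proof of Proposition~\ref{prop:homotopy-step} suffice: integrating $\|\ud \log(\kappa g)\| \leq \tfrac{3}{2} C g$ (Equation~\eqref{eq:27}) along the geodesic from $(\bfw_{t_k}, z_k)$ to $(\bfw_{t_k}, \zeta_{t_k})$ yields, analogously to Equation~\eqref{eq:18} in the reverse direction, a constant-factor comparison between $M_k$ and $M(t_k)$; and the two-sided differential inequality $\left|\tdert (1/(\kappa_t g_t))\right| \leq 3C$ (the symmetric counterpart of Equation~\eqref{eq:43}) controls the possible decay of $M(t)$ on~$[t_k, t_{k+1}]$. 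Combined, they give $M(t) \geq \alpha\, M_k$ on $[t_k, t_{k+1}]$ for some $\alpha > 0$, so that
\[ \int_{t_k}^{t_{k+1}} \kappa(\bfw_t, \zeta_t)\, g(\bfw_t, \zeta_t)\, \ud t \geq \alpha M_k \Delta t_k = \frac{\alpha}{16 C}. \]
Summing over $n$ steps yields $n \leq (16/\alpha) C \int_0^T \kappa g\, \ud t$.

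The main obstacle is the bookkeeping of numerical constants: the two Lipschitz estimates are routine, but squeezing $\alpha \geq 16/25$—so that the final factor is $25C$ rather than some larger constant—requires carefully optimizing the bounds, likely by using slightly sharper one-step estimates than those displayed in the proof of Proposition~\ref{prop:homotopy-step}. Everything else (correctness, invariant propagation, the reduction to a lower bound on $\int \kappa g$) is a direct consequence of that proposition.
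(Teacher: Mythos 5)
Your proposal is correct and follows essentially the same route as the paper: the same invariant $d_\bP(z_k,\zeta_{t_k})\,g(\bfw_{t_k},\zeta_{t_k}) \leq A$ with $A=\tfrac{1}{4C}$ maintained by repeated application of Proposition~\ref{prop:homotopy-step}, and the same lower-bound strategy for the integral, using the reversed versions of Equations~\eqref{eq:43} and~\eqref{eq:18} to control the decay of $\kappa g$ on each interval $[t_k,t_{k+1}]$ and compare it with $\kappa(\bfw_{t_k},z_k)\,g(\bfw_{t_k},z_k)$. The only thing you leave implicit is the final numerical check that the resulting factor $\alpha = \bigl(e^{3/10}+\tfrac{3}{16}\bigr)^{-1} \approx 0.65$ indeed exceeds $16/25$, which is exactly the computation the paper carries out to arrive at the constant $25C$.
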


\begin{proof}
  Let~$t_0 \eqdef 0$, let~$t_k$ be the value of~$t$ at the beginning of the~$k$th
  iteration, let~$z_0\eqdef z$ and let~$z_k$ be the value of~$z$ at the end of the~$k$th iteration;
  namely
  \begin{align}
    t_{k+1} &\eqdef t_k + \frac{1}{16 C\, \kappa(\bfw_{t_k}, z_k) g(\bfw_{t_k}, z_k)} \label{eq:12}\\
    z_{k+1} &\eqdef \cN_{\bfw_{t_{k+1}}}(z_{k}).
  \end{align}
  Let~$K$ be the largest integer such that~$t_K \leq T$ (or $K\eqdef \infty$ if
  there is no such integer).
  The output of the algorithm, if any, is~$z_K$.
  %Let also~$\zeta_t$ be the zero of~$\bfw_t$ obtained by continuation of~$z$.
  Thanks to the Lipschitz hypothesis for~$\bfw$, we have, for any~$k \geq 0$,
  \[ d_\cU(\bfw_{t_k}, \bfw_{t_{k+1}}) \leq t_{k+1}-t_k = \left( 16 C
      \kappa(\bfw_{t_k},z_k)g(\bfw_{t_k},z_k) \right)^{-1}. \]
  Repeated application of Proposition~\ref{prop:homotopy-step} with~$A\eqdef
  \frac{1}{4 C}$ leads to
  \begin{equation}\label{eq:44}
    d_\bP(z_k,\zeta_{t_k})
    g(\bfw_{t_k},\zeta_{t_k}) \leq A = \frac1{4 C},
  \end{equation}
  for any~$k \geq 0$ (conclusion (ii) of
  Proposition~\ref{prop:homotopy-step} is used for hypothesis (a) at the next
  step, initialization is trivial since~$z$ is a zero). By
  Proposition~\ref{prop:homotopy-step} again, for any~$k \geq 0$ and any $t\in
  [t_k,t_{k+1}]$, $z_k$ is an approximate zero of~$\bfw_t \cdot F$. In particular, $z_K$
  is an approximate zero of~$\bfw_T \cdot F$. This proves the correctness of the
  algorithm.

  Concerning the bound on the number of iterations, we first note that
  \begin{align}
    \int_0^T \kappa_t g_t \ud t
    &\geq \int_0^{t_K} \kappa_t g_t \ud t,  && \text{because~$t_K < T$,}\notag\\
    &\geq \sum_{k=0}^{K-1} (t_{k+1} - t_k) \min_{t_k \leq s < t_{k+1}} \kappa_s g_s \notag \\
    &= \sum_{k=0}^{K-1} \frac{\min_{t_k \leq s < t_{k+1}} \kappa_s g_s}{16 C\, \kappa(\bfw_{t_k}, z_k) g(\bfw_{t_k}, z_k)}, \label{eq:19}
  \end{align}
  where we use the notations of the proof of Proposition~\ref{prop:homotopy-step}
  applied to the path~$(\bfw_t)$.

  Similarly to~\eqref{eq:43}, but aiming now for
  lower bounds, we compute that
  \[ \frac{\ud}{\ud s} \frac{1}{\kappa_s g_s} \leq 3C. \]
  After integration,
  analogously to~\eqref{eq:7}, we obtain that for any~$t_k \leq s < t_{k+1}$
  \begin{equation*}
    \kappa_s g_s \geq \frac{\kappa_{t_k} g_{t_k}}{1+3 C (t_{k+1}-t_k) \kappa_{t_k} g_{t_k}}.
  \end{equation*}
  We also check, similarly to~\eqref{eq:18}, integrating along a shortest path
  from~$\zeta_{t_k}$ to~$z_k$, that
  \begin{align*}
    \kappa_{t_k} g_{t_k} &\geq  \kappa(\bfw_{t_k},z_{k}) g(\bfw_{t_k},z_k)
                           \exp \left( - \frac{\frac32 C d_\bP(z_k,\zeta_{t_k}) g(\bfw_{t_k},\zeta_k)}{1+C d_\bP(z_k,\zeta_{t_k}) g(\bfw_{t_k},\zeta_{t_k})} \right) \\
      &\geq \kappa(\bfw_{t_k},z_{k}) g(\bfw_{t_k},z_k) \exp( -\tfrac{3}{10} ), \quad\text{with~\eqref{eq:44}.}
  \end{align*}
  Therefore, for any~$t_k \leq s < t_{k+1}$,
  \begin{align*}
    \kappa_s g_s &\geq \frac{\kappa(\bfw_{t_k},z_{k}) g(\bfw_{t_k},z_k) \exp( -\tfrac{3}{10} )}{1+3 C (t_{k+1}-t_k) \kappa(\bfw_{t_k},z_{k}) g(\bfw_{t_k},z_k) \exp( -\tfrac{3}{10} )} \\
    &\geq \frac{\kappa(\bfw_{t_k},z_{k}) g(\bfw_{t_k},z_k)}{\exp \left( \tfrac3{10} \right) + \tfrac3{16}} , \text{ using the value for $t_{k+1}-t_k$ \eqref{eq:12}},
  \end{align*}
  and then
  \begin{equation*}
     \frac{\min_{t_k \leq s < t_{k+1}} \kappa_s g_s}{16 C\, \kappa(\bfw_{t_k}, z_k) g(\bfw_{t_k}, z_k)} \geq \frac{1}{16 \exp \left( \tfrac3{10} \right) + 3}\cdot \frac{1}{C} \leq \frac{1}{25 C}.
  \end{equation*}
  Therefore, by \eqref{eq:19}, $\int_0^T \kappa_tg_t\ud t \geq \frac{1}{25 C} K$.
\end{proof}

We have some degrees of freedom but also some constraints in the choice of the
path $(\bfw_t)_{0\leq t\leq T}$ from~$\bfv$ to~$\bfu$:
\begin{enumerate}[(P1)]
\item\label{item:9} the path is~$1$-Lipschitz continuous;
\item\label{item:5} the path~$(\bfv^{-1} \bfw_t)_{0\leq t\leq T}$ (from~$\one_\cU$
  to~$\bfv^{-1} \bfu$) depends only on~$\bfv^{-1} \bfu$; and
\item\label{item:6} the length $T$ of the path is at most~$4n$.
\end{enumerate}
The first one is required by the numerical continuation algorithm.
The two others will be useful for the complexity analysis.

An obvious choice of the path between~$\bfv$ and~$\bfu$ is the shortest
one: we write $\bfv^{-1} \bfu = \left(\exp(A_1),\dotsc,\exp(A_n) \right)$ for
some skew-Hermitian matrices~$A_1,\dotsc,A_n$ and define
\[ \bfw_t \eqdef \bfv \left( \exp( \tfrac t T A_1 ), \dotsc, \exp( \tfrac tT
    A_n) \right), \text{ for $0\leq t\leq T$,} \]
where~$T \eqdef (\sum_i \|A_i\|^2_u)^{\frac12}$. We can always choose the
matrices~$A_i$ such that $T \leq 4n$ as follows:
diagonalize each~$A_i$ in a unitary basis (which preserves the Frobenius
norm) as~$\mop{diag}(\theta_{i,0},\dotsc,\theta_{i,n}) \sqrt{-1}$, for some~$\theta_{i,j}
\in \bR$; add integer multiples of~$2\pi$ to the $\theta_{i,j}$ (which
preserves the end points of the path) so that~$\theta_{i,j} \in [-\pi,\pi]$;
and finally, obtain
\[ T^2=\sum_{i=1}^n \|A_i\|^2_u = \frac{1}{2}\sum_{i = 1}^n \sum_{j=0}^n
  \theta_{i,j}^2 \leq \frac12 n(n+1) \pi^2 < (4n)^2. \]
Naturally, it may
not be convenient to compute matrix logarithms and exponentials, especially for
the complexity analysis in the BSS model. In \S \ref{sec:interp-paths-unit} we
will see paths that are cheaper to compute but still satisfy~\ref{item:5} and~\ref{item:6}.

\begin{remark}\label{remark:computation-step-size}
  To implement Algorithm~\ref{algo:hc}, the computation of the step
  size by which~$t$ is updated can be relaxed: it is enough
  to compute a number~$\tau$ such that
  \[ M^{-1} \left(16 C \, \kappa(\bfw_t, z) g(\bfw_t, z)\right)^{-1} \leq \tau
    \leq \left(16 C \, \kappa(\bfw_t, z) g(\bfw_t, z)\right)^{-1}, \]
  for some constant~$M \geq 1$, and use it as the step size.
  Correctness is not harmed, and the number of continuation steps is now bounded
  by
  \[ 25 M C \int_0^T \kappa(\bfw_t,\zeta_t) g(\bfw_t, \zeta_t) \ud t. \]
\end{remark}

\subsection{A randomized algorithm}
\label{sec:randomized-algorithm}

We have everything we need to mimic Beltrán--Pardo algorithm in the
rigid setting: a continuation algorithm with an integral estimate for its
complexity and a recipe to sample points in the solution variety with the
appropriate distribution. This leads to Algorithm~\ref{algo:ubp} (Solve): for finding a
zero of~$\bfu \cdot F$, we first sample a random element~$(\bfv, \eta)$
in~$\cV$, with Algorithm~\ref{algo:sample} (Sample), and then perform, with Algorithm~\ref{algo:hc}, the numerical
continuation along a path in~$\cU$ from~$\bfv$ to~$\bfu$.
Based on Theorem~\ref{thm:complexity-numcont}, we can estimate the expected
number of continuation steps performed by~$\mop{Solve}(F, \bfu)$ when~$\bfu$ is uniformly distributed.

\begin{algo}[tp]
  \centering
  \begin{algorithmic}
    \Function{Solve}{$F$, $\bfu$}
      \State $(\bfv, \eta) \gets  \mop{Sample}(\cV_F)$
      \State \Return $\mop{NC}\left( \bfu, \bfv, \eta \right)$
      %using~$\hat\gamma_\Frob$ to compute the step size.
    \EndFunction
  \end{algorithmic}
  \caption[]{An analogue of Beltrán-Pardo algorithm in the rigid setting
    \begin{description}
      \item[Input.] Homogeneous polynomial system~$F \in \cH[n]$ and~$\bfu \in \cU$
      \item[Output.] $z\in\Proj$.
      \item[Postcondition.] $z$ is an approximate zero of~$\bfu \cdot F$.
    \end{description}
  }
  \label{algo:ubp}
\end{algo}

\begin{theorem}\label{thm:expectation-K}
  If~$\bfu\in\cU$ is  uniformly distributed
  then~$\mop{Solve}(F, \bfu)$ terminates almost surely and outputs an
  approximate zero of~$\bfu\cdot F$ after~$K$ continuation steps, with
  \[ \bE \left[ K \right]\leq 100 C n \ \bE\left[\kappa(\bfv, \eta) g(\bfv,
      \eta) \right]. \]
\end{theorem}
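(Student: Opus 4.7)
The plan is to combine Theorem~\ref{thm:complexity-numcont} with a distributional invariance along the continuation path. First, by Theorem~\ref{thm:complexity-numcont} applied to the path $(\bfw_t)_{0\le t\le T}$ used by \textsc{NC}, if the path lifts continuously into $\cV$ then the algorithm terminates with
\[ K \le 25 C \int_0^T \kappa(\bfw_t,\zeta_t)\, g(\bfw_t,\zeta_t)\, \ud t; \]
by property~(P3) one has $T\le 4n$. So, granting that at every fixed $t\in[0,T]$ the tracked pair $(\bfw_t,\zeta_t)$ has distribution $\rstd$, Fubini and monotonicity will give
\[ \bE[K] \le 25 C \int_0^{4n} \bE_{\rstd}[\kappa g]\, \ud t \;=\; 100 C n\, \bE[\kappa(\bfv,\eta) g(\bfv,\eta)]. \]

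The heart of the matter is the distributional claim. The key step is the change of variables $\bfz\eqdef \bfv^{-1}\bfu$: by right-invariance of the Haar measure on $\cU$ and the independence of $\bfu$ from $(\bfv,\eta)\sim\rstd$, the triple $(\bfv,\eta,\bfz)$ has law $\rstd\otimes\text{Unif}(\cU)$. By property~(P2), the path takes the form $\bfw_t=\bfv\,\phi_t(\bfz)$ with $\phi_t$ a deterministic function of $\bfz$. Conditioning on $\bfz$, I argue separately that $\bfw_t$ is uniform on $\cU$ (a right-translation of the uniform $\bfv$ by a fixed element) and that, conditionally on $\bfv$, $\zeta_t$ is uniform in the fiber $\cap\bfw_t\cX$. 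The second point is where the real work lies: away from $\Sigma'$, continuous path-lifting through $\pi_\cU\colon\cV\to\cU$ is a bijection between consecutive fibers (monodromy permutes the generically $d_1\cdots d_n$ Bezout zeros), so a uniform zero at time~$0$ remains uniform at time~$t$. Together these two facts characterize $\rstd$ on $\cV$.

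The remaining ingredients are routine. Almost-sure termination follows from the extension of Proposition~\ref{coro:possibility-of-continuation} to unitary-invariant paths (the discussion immediately after that proposition), which combined with Lemma~\ref{prop:codim-sigma} ensures that the path $(\bfw_t)$ avoids $\Sigma'$ with probability one, so the continuous lift exists throughout $[0,T]$. Pulling together termination, the integral bound of Theorem~\ref{thm:complexity-numcont}, the invariance claim, and $T\le 4n$, the stated bound $\bE[K]\le 100 Cn\,\bE[\kappa(\bfv,\eta)g(\bfv,\eta)]$ drops out (using $4\cdot 25=100$). The main obstacle I anticipate is formalising the permutation-of-fibers argument cleanly, in particular checking that the measure-zero set where the lift fails to be a bijection can safely be ignored.
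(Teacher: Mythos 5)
Your proposal follows essentially the same route as the paper's proof: reduce to the integral bound of Theorem~\ref{thm:complexity-numcont}; guarantee a.s.\ regularity of the path via Proposition~\ref{coro:possibility-of-continuation} and the discussion that follows it; use the decomposition into $\bfv$ and $\bfv^{-1}\bfu$ together with property~(P2) so that these are independent and uniform; argue that path-lifting gives a bijection between fibers, so $\zeta_t$ stays uniform and $(\bfw_t,\zeta_t)\sim\rstd$; and finish with property~(P3). The one place to be more careful is the passage from the integral bound to $100Cn\,\bE\bigl[\kappa(\bfv,\eta)g(\bfv,\eta)\bigr]$: since $T$ is random (it is a function of $\bfv^{-1}\bfu$) and the path is defined only on $[0,T]$, ``Fubini and monotonicity'' does not directly yield
\[
\bE\Bigl[\int_0^T\kappa_t g_t\,\ud t\Bigr]\le\int_0^{4n}\bE_{\rstd}[\kappa g]\,\ud t.
\]
The paper instead reparametrizes $t=Ts$, $s\in[0,1]$, and factors $\bE\bigl[T\,\kappa(\bfw_{Ts},\zeta_{Ts})g(\bfw_{Ts},\zeta_{Ts})\bigr]=\bE[T]\,\bE_{\rstd}[\kappa g]$, which requires $(\bfw_{Ts},\zeta_{Ts})$ to be independent of $T$. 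Your conditioning on $\bfv^{-1}\bfu$ does establish exactly this independence, since you show the conditional law of $(\bfw_t,\zeta_t)$ given $\bfv^{-1}\bfu$ is $\rstd$ and hence does not depend on $\bfv^{-1}\bfu$ (nor on $T$); the argument is therefore correct once you make the reparametrization and the explicit use of this independence part of the final computation.
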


\begin{proof}
  Let $(\bfw_t)_{0\leq t \leq T}$ be the path from~$\bfv$ to~$\bfu$ chosen in
  NC. By \ref{item:5}, $\bfv^{-1} \bfw_t$ is a function of~$\bfv^{-1} \bfu$.
  We first note that~$\bfv$ and $\bfv^{-1} \bfu$ are independent and uniformly
  distributed in~$\cU$, because the Jacobian of the diffeomorphism
  \[ (\bfu,\bfv) \in \cU \times \cU \mapsto (\bfv, \bfv^{-1} \bfu) \in \cU \times
    \cU \]
  is constant. Secondly, by hypothesis, for any~$0\leq s\leq 1$, the random
  variable $\bfv^{-1} \bfw_{Ts}$ depends only on~$\bfv^{-1}\bfu$, so it is
  independent from~$\bfv$. Therefore~$\bfw_{Ts}$, which equals~$\bfv (\bfv^{-1}
  \bfw_{Ts})$, is uniformly distributed and independent from~$\bfv^{-1} \bfu$.

  By Proposition~\ref{coro:possibility-of-continuation}, with probability~$1$, all
  the polynomial
  systems~$\bfw_t \cdot F$, for $0\leq t\leq T$, have only regular zeros. So almost
  surely, all the
  zeros of~$\bfv \cdot F$ can be continued as zeros of~$\bfw_t \cdot F$.
  Let~$\zeta_t$ be the zero of~$\bfw_t \cdot F$ obtained by continuation of the
  zero~$\eta$ of~$\bfv \cdot F$. Since~$\zeta$ is uniformly distributed among the
  zeros of~$\bfv$, it follows that~$\zeta_t$ is uniformly distributed among
  the zeros of~$\bfw_t$, because the numerical continuation, which is almost
  surely well defined, induces a bijective
  correspondance between the two finite sets of zeros.
  Therefore, for any~$0\leq s \leq 1$, $(\bfw_{Ts},\zeta_{Ts})$ is a
  $\rho_\text{std}$-distributed random variable independent from~$\bfv^{-1}
  \bfu$, and in particular, independent from~$T$.
  
  Together with Theorem~\ref{thm:complexity-numcont} (and the change of variable~$t=Ts$), this implies
  \begin{align*}
    \bE[ K ]
    & \leq \bE \left[  25 C \int_0^{1} \kappa( \bfw_{Ts}, \zeta_{Ts}) g( \bfw_{Ts}, \zeta_{Ts}) T \ud s \right] \\   
    &= 25 C \int_0^{1} \bE \left[ \kappa(\bfw_{Ts}, \zeta_{Ts}) g(\bfw_{Ts}, \zeta_{Ts}) T \right] \ud s \\
    &= 25 C \, \bE \left[ \kappa(\bfv,\eta) g(\bfv,\eta) \right] \bE[T],
  \end{align*}
  which leads to the claim, with the bound~$T \leq 4n$ given by~\ref{item:6}.
\end{proof}

\section{Average complexity for random dense polynomial systems}
\label{sec:aver-compl-dense}

Theorem~\ref{thm:expectation-K} on the average complexity of computing one zero of a random
system~$\bfu \cdot F$, with~$\bfu\in\cU$ uniformly distributed and~$F$ fixed, is applicable
to the more common question of computing one zero of a random system~$F$, under
a unitary invariance condition on the probability distribution of~$F$.

The polynomial system~$F\in\cH[n]$ fixed in~\S \ref{sec:numer-cont-algor} is now
a random variable. 
The probability distribution of~$F$ is assumed to be
\emph{unitary invariant}: that is, for any~$\bfu \in \cU$, the random systems $F$ and~$\bfu \cdot F$ are
identically distributed.

An important example of a unitary invariant distribution is \emph{Kostlan's
distribution}: that is the standard Gaussian distribution on~$\cH[n]$ endowed
with Weyl's Hermitian norm. The unitary invariance of Kostlan's distribution
follows from the invariance of Weyl's norm under the action of~$\cH$.
Concretely, a \emph{Kostlan random system} $F\in \cH[n]$ is a
tuple~$(f_1,\dotsc,f_n)$ of independent \emph{Kostlan random polynomials}, that
is
\[ f_i \eqdef \sum_{j_0+\dotsb+j_n = d_i} \left( \frac{d_i!}{j_0!\dotsb j_n!}
  \right)^{\frac12} c_{j_0,\dotsc,j_n} x_0^{j_0} \dotsb x_n^{j_n}, \]
where the coefficients~$c_{j_0,\dotsc,j_n}$ are independent standard normal
variables in~$\bC$.

Section \ref{sec:unit-invar-rand} contains the average analysis of the number of
continuation steps in the general setting of a unitary invariant distribution.
Section~\ref{sec:an-effic-comp} describes the function~$\hat\gamma_\Frob$ that
 will be used for the numerical continuation (in the role of~$g$).
Next, Section~\ref{sec:impl-deta-compl} discusses the computational model and
the construction of paths in~$\cU$.
And lastly, Section \ref{sec:gauss-rand-syst} concludes the average analysis for
Kostlan's distribution.

\subsection{Unitary invariant random systems}
\label{sec:unit-invar-rand}

Let~$\fkg : \cH[n] \to (0,\infty]$ be a function of the form (compare
to Equation~\eqref{eq:37} defining $\hat\gamma$)
\begin{equation}\label{eq:39}
  \fkg(F, z) \eqdef \kappa(F, z) \big( \fkg_1(f_1, z)^2 + \dotsb + \fkg_n(f_n, z)^2 \big)^\frac12,
\end{equation}
for some~$\fkg_i : H_{d_i} \times \Proj \to (0,\infty]$ such that for any~$1\leq i\leq n$ and any~$f \in H_{d_i}$,
\begin{enumerate}[(H1')]
\item \label{item:4} $x\in \Proj$, $\gamma(f, x) \leq
  \fkg_i(f, x)$;
\item \label{item:3} for any $x\in \Proj \mapsto \fkg_i(f,
  x)^{-1}$ is~$C'$-Lipschitz continuous ($C' \geq 4$); and
\item \label{item:7} for any~$z\in\Proj$ and~$u\in U(n+1)$,
  $\fkg_i(u\cdot f , x) = \fkg_i(f, u^{-1} x)$.
\end{enumerate}
These assumptions make it possible to use~$\fkg$ for numerical
continuation in the rigid setting.

\begin{lemma}\label{lem:lip-cond-split}
  If~\ref{item:4}--\ref{item:7} hold, then
  for any~$F\in \cH[n]$, the function
  \[ g : (\bfu, x)\in\cU \times \Proj \mapsto
  \fkg(\bfu \cdot F, x) \] satisfies conditions~\ref{item:1} and~\ref{item:2},
  with~$C=3 C'$.
\end{lemma}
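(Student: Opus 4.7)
The plan is to verify the two conditions in turn. Condition~\ref{item:1} follows immediately from~\ref{item:4}: since $\gamma(u_i \cdot f_i, x) \leq \fkg_i(u_i \cdot f_i, x)$ pointwise for each $i$, and since both $\hat\gamma(\bfu,x) = \hat\gamma(\bfu \cdot F, x)$ and $g(\bfu,x) = \fkg(\bfu\cdot F, x)$ share the same $\kappa(\bfu\cdot F, x)$ prefactor and the same $\ell^2$ aggregation, one obtains $\hat\gamma \leq g$ on all of $\cU \times \Proj$.  For~\ref{item:2}, I would factor $g = \kappa\cdot h$ with $h(\bfu,x) \eqdef \bigl(\sum_i \Phi_i(\bfu,x)^2\bigr)^{1/2}$ and $\Phi_i(\bfu,x) \eqdef \fkg_i(u_i\cdot f_i, x) = \fkg_i(f_i, u_i^{-1} x)$ (the last identity being~\ref{item:7}), and then control $\|\ud\kappa\|$ and $\|\ud h\|$ separately.

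The key technical substep is to prove $\|\ud h\| \leq \sqrt{2}\,C'\,h^2$. First, I would observe that $(u_i,x) \mapsto u_i^{-1} x$ is $\sqrt{2}$-Lipschitz on $U(n+1)\times \Proj$, because $u_i \mapsto u_i^{-1} x$ is $1$-Lipschitz (Lemma~\ref{lem:riem-submersion-proj} together with the fact that inversion is an isometry of $U(n+1)$) and $x \mapsto u_i^{-1} x$ is itself an isometry.  Combining with~\ref{item:3} and the chain rule yields the pointwise bound $\|\ud\Phi_i\| \leq \sqrt{2}\,C'\,\Phi_i^2$, where importantly the derivative acts only on the $(\dot u_i, \dot x)$ coordinates.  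Starting from $h\,\ud h = \sum_i \Phi_i\,\ud\Phi_i$, Cauchy--Schwarz gives $|(\ud h)(\dot\bfu,\dot x)| \leq \bigl(\sum_i ((\ud\Phi_i)(\dot u_i, \dot x))^2\bigr)^{1/2}$; inserting the pointwise bound and using $\sum_i \Phi_i^4 \|\dot u_i\|^2 \leq h^4 \|\dot\bfu\|^2$ (from $\max_i \Phi_i^2 \leq h^2$) and $\sum_i \Phi_i^4 \leq h^4$ (monotonicity of $p$-norms) collapses the estimate to $\sqrt{2}\,C'\,h^2\,\|(\dot\bfu,\dot x)\|$.  I expect this Cauchy--Schwarz step to be the main delicate point: the crucial observation is that $\Phi_i$ depends on $\bfu$ only through the single factor $u_i$, which is what prevents a spurious factor of~$n$ from appearing when summing the individual contributions.

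Finally, I would combine with Lemma~\ref{lemma:kappa-lip} in its form $\|\ud\kappa\| \leq 5\kappa\hat\gamma$ (valid under the standing assumption $D \geq 2$ of this part of the paper). From $\ud g = h\,\ud\kappa + \kappa\,\ud h$, dividing by $g^2 = \kappa^2 h^2$ and using $\hat\gamma \leq g$ as just established,
\[
\|\ud(1/g)\| \;\leq\; \frac{\|\ud\kappa\|}{\kappa^2 h} + \frac{\|\ud h\|}{\kappa h^2} \;\leq\; \frac{5\hat\gamma}{g} + \sqrt{2}\,C' \;\leq\; 5 + \sqrt{2}\,C'.
\]
The closing numerical check $5 + \sqrt{2}\,C' \leq 3\,C'$ is equivalent to $C' \geq 5/(3-\sqrt{2}) \approx 3.15$, which is implied by the hypothesis $C' \geq 4$.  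Therefore $1/g$ is $3C'$-Lipschitz, giving~\ref{item:2} with $C = 3C' \geq 12 \geq 10$ as required.
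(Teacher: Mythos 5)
Your proof is correct, and it follows the same strategy the paper intends: the paper's proof of this lemma is one line that points back to Lemma~\ref{lemma:gammas-lip}, whose argument you reproduce mutatis mutandis (with $5$-Lipschitz replaced by $C'$-Lipschitz). The one place you deviate is the aggregation step for $\|\ud h\|$: the paper first bounds $\|\ud\eta\| \leq 2\sum_i \Phi_i \|\ud\Phi_i\|$ by the triangle inequality and then uses $\sum_i \Phi_i^3 \leq \eta^{3/2}$, whereas you apply Cauchy--Schwarz to $\sum_i \Phi_i\,\ud\Phi_i$ and then use $\sum_i \Phi_i^4 \leq h^4$ and $\max_i\Phi_i^2 \leq h^2$. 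Both routes deliver the identical bound $\|\ud h\| \leq \sqrt{2}\,C'\,h^2$, so the Cauchy--Schwarz refinement you emphasize as the ``delicate point'' does not in fact buy anything here (the paper avoids a spurious factor of $n$ already, because $\|\ud\Phi_i\|$ as a functional on the full tangent space of $\cU\times\Proj$ is unchanged by the presence of the other $u_j$-directions). Your final arithmetic $5+\sqrt{2}\,C' \leq 3C'$ for $C'\geq 4$ and the check $3C'\geq 10$ are exactly what is needed, and your explicit flag that $\|\ud\kappa\|\leq 5\kappa\hat\gamma$ relies on the standing assumption $D\geq 2$ is appropriate (the paper handles the degenerate $D=1$ case by noting $\hat\gamma\equiv 0$ there).
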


\begin{proof}
  Condition~\ref{item:1} follows directly from the definitions of~$\hat\gamma$
  and~$\fkg$.
  Condition~\ref{item:2} follows exactly as the Lipschitz continuity
  of~$\hat\gamma$ (Lemma~\ref{lemma:gammas-lip}).
\end{proof}

\begin{theorem}\label{thm:complexity-uncouple}
  If the probability distribution of the system~$F = (f_1,\dotsc,f_n)\in \cH[n]$
  is unitary invariant,
  the procedure $\mop{Solve}(F, \one_\cU)$, with~$g(\bfu, x)\eqdef\fkg(\bfu\cdot F, x)$
  for the continuation,
  computes an approximate root of~$F$ using~$K$ continuation steps, with
  \[ \bE[K] \leq 1800 C' n^3 \, \bigg(\sum_{i=1}^n \bE \left[ \fkg_i(f_i,\zeta_i)^2
      \right] \bigg)^\frac12, \]
  where~$\zeta_1,\dotsc,\zeta_n\in\Proj$ are random independent uniformly
  distributed zeros of the respective random polynomials~$f_1,\dotsc,f_n$.
\end{theorem}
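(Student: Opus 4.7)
The plan is to combine the integral estimate of Theorem~\ref{thm:complexity-numcont} with the unitary invariance of $F$ to reduce everything to a computation on $\rstd$-distributed data on $\cV_F$, where Theorem~\ref{thm:linearization} and Proposition~\ref{prop:expectation-kappa} apply directly. By Lemma~\ref{lem:lip-cond-split}, setting $g(\bfu, x) \eqdef \fkg(\bfu \cdot F, x)$ is an admissible choice for the continuation, with $C = 3C'$. Running $\mop{Solve}(F, \one_\cU)$ samples $(\bfv, \eta) \sim \rstd$ on $\cV_F$, chooses a path $(\bfw_t)_{0\leq t\leq T}$ from $\bfv$ to $\one_\cU$ satisfying \ref{item:9}--\ref{item:6}, and tracks the lift $(\bfw_t, \zeta_t) \in \cV_F$. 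Setting $F_t \eqdef \bfw_t \cdot F$ and using $T \leq 4n$, Theorem~\ref{thm:complexity-numcont} gives
\[ \bE[K] \leq 25 C \int_0^{4n} \bE\bigl[\kappa(F_t, \zeta_t)\, \fkg(F_t, \zeta_t)\bigr]\, \ud t. \]

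The crucial step is to show that $(F_t, \zeta_t) \stackrel{d}{=} (F_0, \zeta_0) = (\bfv \cdot F, \eta)$ for every $t$. By \ref{item:5}, $\bfw_t = \bfv\, h_t(\bfv^{-1})$ is a deterministic function of $\bfv$, and $F$ is independent of $\bfv$; unitary invariance of $F$ then yields $F_t \mid \bfv \stackrel{d}{=} F$, so $F_t$ is itself independent of $\bfv$ and distributed as $F$. Since $\eta \mapsto \zeta_t$ is almost surely a bijection between the zeros of $F_0$ and those of $F_t$, conditionally on $(\bfv, F)$ the random variable $\zeta_t$ is uniform on $Z(F_t)$. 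A straightforward application of the tower property then identifies the law of $(F_t, \zeta_t)$ with that of $(F, \zeta^\star)$, where $\zeta^\star \mid F$ is uniform on $Z(F)$, and this is in turn the law of $(\bfv\cdot F, \eta)$. Consequently, for every $t$,
\[ \bE\bigl[\kappa(F_t, \zeta_t)\, \fkg(F_t, \zeta_t)\bigr] = \bE\bigl[\kappa(\bfv, \eta)\, \fkg(\bfv, \eta)\bigr]. \]

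To bound the right-hand side we exploit the rigid structure at $t=0$. Writing $y_i \eqdef v_i^{-1}\eta$ and applying \ref{item:7} componentwise gives $\fkg_i(v_i \cdot f_i, \eta) = \fkg_i(f_i, y_i)$, so that $\kappa(\bfv,\eta)\,\fkg(\bfv,\eta) = \kappa(\bfv,\eta)^2 \bigl(\sum_i \fkg_i(f_i, y_i)^2\bigr)^{1/2}$. Theorem~\ref{thm:linearization}(i)--(ii), applied for fixed $F$, shows that $L(\bfv,\eta)$ is uniform in $\cL$ and independent of $\bfy = (y_1,\dots,y_n)$, which is uniform in $\cX_F$. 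Thus $\kappa$ (a function of $L$ alone) is conditionally independent, given $F$, of the sum (a function of $(F,\bfy)$). Proposition~\ref{prop:expectation-kappa} gives $\bE[\kappa^2 \mid F] \leq 6n^2$ fibrewise --- the distribution of $\kappa$ does not depend on $F$ --- and two applications of Jensen's inequality commute the square root with both the sum and the outer expectation over $F$. Since $(f_i, y_i) \stackrel{d}{=} (f_i, \zeta_i)$ by construction of the $\zeta_i$, this yields
\[ \bE\bigl[\kappa(\bfv, \eta)\, \fkg(\bfv, \eta)\bigr] \leq 6n^2 \Bigl(\sum_{i=1}^n \bE\bigl[\fkg_i(f_i, \zeta_i)^2\bigr]\Bigr)^{1/2}, \]
and substitution into the integral with $C = 3C'$ produces the advertised constant $25 \cdot 4 \cdot 6 \cdot 3 = 1800$.

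The principal obstacle is the distributional identity of paragraph two. The pair $(\bfw_t, \zeta_t)$ is \emph{not} $\rstd$-distributed on $\cV_F$, because the map $\bfv \mapsto \bfv\, h_t(\bfv^{-1})$ is generally not measure-preserving; so Theorem~\ref{thm:linearization} is not directly available along the path. The key observation is that the coarser pair $(F_t, \zeta_t)$, rather than $(\bfw_t, \zeta_t)$, has a stable law, because the unitary invariance of $F$'s distribution absorbs the $\bfv$-dependence at the level of the polynomial system. This reduction is what lets us replace the $t$-dependent integrand by its value at $t = 0$, at which point the full $\rstd$-structure is available and the decoupling of $\kappa$ from the $\fkg_i$ via Theorem~\ref{thm:linearization} becomes the governing computation.
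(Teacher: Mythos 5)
Your proof is correct and the final constant $25\cdot 4\cdot 6\cdot 3 = 1800$ matches. The core ingredients are the same as the paper's: the integral estimate (Theorem~\ref{thm:complexity-numcont}), stationarity of the tracked pair's law along the path, and the conditional decoupling of $\kappa$ from the $\fkg_i$ via Theorem~\ref{thm:linearization} together with Proposition~\ref{prop:expectation-kappa}. What you do differently is in the middle: the paper first replaces the deterministic target $\one_\cU$ by a uniform random $\bfu$ independent of $F$, via the chain $K(F,\one_\cU)\sim K(\bfu\cdot F,\one_\cU)\sim K(F,\bfu)$, and then invokes Theorem~\ref{thm:expectation-K} wholesale. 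With $\bfu$ random and $\bfv^{-1}\bfu$ independent of $\bfv$, the paper's proof of Theorem~\ref{thm:expectation-K} gets that $(\bfw_{Ts},\zeta_{Ts})$ is literally $\rstd$-distributed at every $s$. You instead keep the target $\one_\cU$ fixed, correctly observe that $(\bfw_t,\zeta_t)$ is then \emph{not} $\rstd$-distributed (since $\bfv\mapsto\bfv\,h_t(\bfv^{-1})$ is not measure-preserving), and compensate by pushing the unitary invariance of $F$'s law into the argument: the coarser pair $(F_t,\zeta_t)=(\bfw_t\cdot F,\zeta_t)$ has a law independent of $t$. This is a bit more delicate to verify (you need the continuation bijection argument and the tower-property computation to conclude that $(F_t,\zeta_t)\stackrel{d}{=}(F,\zeta^\star)$), but it is self-contained and avoids the detour through Theorem~\ref{thm:expectation-K}. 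One minor point of hygiene: when you extend the integral from $\int_0^T$ to $\int_0^{4n}$ with $T$ random, you should say explicitly that the path (and lift) is extended constantly past $T$, so the stationarity statement continues to hold for $T<t\leq 4n$; the paper avoids this by the change of variable $t=Ts$ and bounding $\bE[T]\leq 4n$.
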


\begin{proof}
  Let~$K(F, \bfu)$ be the (random) number of continuation steps performed by
  $\mop{Solve}(F, \bfu)$.
  Let~$\bfu \in \cU$ be a uniformly distributed random variable independent from~$F$.
  By the unitary invariance hypothesis, $F \sim \bfu \cdot F$, so~$K(F,\one_\cU) \sim K(\bfu \cdot F,
  \one_\cU)$,
  and by the assumption~\ref{item:5} on the unitary invariance of the choice of the
  continuation path, $K(\bfu\cdot F, \one_\cU) \sim K(F, \bfu)$. In particular, $\bE\left[K(F, \one_\cU)\right] = \bE\left[K(F, \bfu)\right]$.
  By Theorem~\ref{thm:expectation-K} (with $C = 3C'$, by
  Lemma~\ref{lem:lip-cond-split}), 
  \begin{equation}\label{eq:52}
    \bE\left[K(F, \bfu)\right] \leq 300 C' n \, \bE \left[ \kappa(\bfu\cdot F, \zeta) \fkg(\bfu \cdot
      F, \zeta) \right],
  \end{equation}
  where~$\zeta$ is a uniformly distributed random zero of~$\bfu\cdot F$.
  Given the form of $\fkg$ \eqref{eq:39}, the right-hand side expands to
  \begin{equation}
    \label{eq:51}
     \kappa(\bfu\cdot F, \zeta) \fkg(\bfu \cdot
      F, \zeta) = \kappa(\bfu\cdot F, \zeta)^2 \bigg( \sum_{i=1}^n \fkg_i(f_i, u_i^{-1} \zeta)^2 \bigg)^\frac12,
  \end{equation}
  using also the unitary invariance of each~$\fkg_i$ \ref{item:7}.

  Conditionally on~$F$, $(\bfu, \zeta)$ is $\rstd$-distributed in the solution
  variety~$\cV_F$.
  By Theorem~\ref{thm:linearization},
  $u_1^{-1} \zeta, \dotsc, u_n^{-1} \zeta$ are independent and uniformly distributed
  zeros of~$f_1,\dotsc,f_n$ respectively, and $\kappa(\bfu\cdot F, \zeta)$, which depends only
  on~$L(\bfu\cdot F, \zeta)$, see~\eqref{eq:10}, is independent with them conditionally on~$F$.
  So the two factors in the right-hand side of~\eqref{eq:51} are independent
  conditionally on~$F$; therefore
  \begin{multline}\label{eq:47}
    \bE\left[ \kappa(\bfu \cdot F,\zeta) \fkg(\bfu \cdot F,\zeta) \st F \right]
    = \\
    \bE \left[ \kappa(\bfu\cdot F,\zeta)^2 \st F \right]
    \bE\left[\bigg(\sum_{i=1}^n \fkg_i(f_i,u_i^{-1} \zeta)^2 \bigg)^\frac12
      \st F \right],%\\
    % &\leq \bE \left[ \kappa(\bfu\cdot F,\zeta)^2 \right]
    % \bigg(\sum_{i=1}^n \bE\left[\fkg_i(f_i,\zeta_i)^2 \right] \bigg)^\frac12,
  \end{multline}
  where~$\bE \left[ - | F \right]$ denotes conditional expectation.
  Proposition~\ref{prop:expectation-kappa} gives the bound $\bE \left[ \kappa(\bfu\cdot
    F,\zeta)^2 \st F \right] \leq 6n^2$,
  and then
  \begin{align}
    \bE\left[ \kappa(\bfu \cdot F,\zeta) \fkg(\bfu \cdot F,\zeta) \right] &= \bE \big[ \bE\left[ \kappa(\bfu \cdot F,\zeta) \fkg(\bfu \cdot F,\zeta) \st F \right] \big] \notag\\
    &\leq \bE \left[ 6n^2 \, \bE\left[\bigg(\sum_{i=1}^n \fkg_i(f_i,u_i^{-1} \zeta)^2 \bigg)^\frac12
      \st F \right] \right], \text{by \eqref{eq:47},}\notag\\
    &=  6n^2 \, \bE \left[\bigg(\sum_{i=1}^n \fkg_i(f_i,u_i^{-1} \zeta)^2 \bigg)^\frac12
     \right] \label{eq:48} \\
    &\leq 6n^2 \bigg(\sum_{i=1}^n \bE\left[\fkg_i(f_i,u_i^{-1} \zeta)^2 \right] \bigg)^\frac12,\notag
  \end{align}
  where the last inequality follows from Jensen's inequality.
  We note as above that~$u_i^{-1} \zeta$ is a uniformly distributed
  zero of~$f_i$, so that $ \bE\left[\fkg_i(f_i,u_i^{-1} \zeta)^2 \right] =  \bE\left[\fkg_i(f_i,\zeta_i)^2 \right]$.
  With~\eqref{eq:52}, this concludes the proof.
\end{proof}

\begin{remark}
  Under the same hypotheses, we also have the bound
  \[ \bE[K] \leq 1800 C' n^3 \, \sum_{i=1}^n \bE \left[ \fkg_i(f_i,\zeta_i)
    \right], \]
  obtained by bounding $\left(\sum_{i} \fkg_i(f_i,u_i^{-1} \zeta)^2
  \right)^\frac12 \leq \sum_i \fkg_i(f_i,u_i^{-1}\zeta)$ in~\eqref{eq:48}.
\end{remark}

\subsection{An efficiently computable variant of $\gamma$}
\label{sec:an-effic-comp}

Controlling the total complexity of Algorithm~\ref{algo:ubp}
requires a function~$g$ that is easy to compute.
The \emph{Frobenius gamma number} is introduced here with this purpose.

\subsubsection{Norms of a multilinear map}

Let~$E$ and~$F$ be Hermitian spaces and let~$h : E^k \to F$ be a multilinear map,
%that is equivalent to the data of a linear map~$E \to ( E \to \dotsb (E\to F))$.
the \emph{operator norm} of~$h$ is defined by
\[ \tn{h} \eqdef \max \left\{ \| h(x_1,\dotsc,x_k) \| \st
    x_1,\dotsc,x_k \in \bS(E) \right\}, \]
where~$\bS(E)$ is the unit sphere of~$E$,
%This is the operator norm on~$E \to ( E \to \dotsb (E\to F))$
%where each $E\to *$ is recursively endowed with the operator norm.
and the \emph{Frobenius norm} of~$h$ is defined as
\begin{equation} \|h\|_\Frob \eqdef \bigg( \sum_{1\leq i_1,\dotsc,i_k \leq n}
   \| a_{i_1,\dotsc,i_k} \|^2 \bigg)^{\frac12}, \label{eq:4}
\end{equation}
where the~$a_{i_1,\dotsc,i_k}$ are the coefficients of~$h$ in some unitary
basis~$e_1,\dotsc,e_{\dim E}$ of~$E$, that is
$a_{i_1,\dotsc,i_k} \eqdef h(e_{i_1},\dotsc,e_{i_k}) \in F$,
for~$1 \leq i_1,\dotsc,i_k \leq n$.
This definition does not depend on he choices of the unitary basis.
%This norm may also be defined as the usual Frobenius on $E \to ( E \to \dotsb (E\to F))$
%where each $E\to *$ is given the Frobenius norm.

\begin{lemma}\label{lem:frob-op-compar}
  For any multilinear map~$h : E^k \to F$, $\tn{h} \leq \|h\|_\Frob \leq (\dim E)^{\frac
    k2} \tn{h}$.
\end{lemma}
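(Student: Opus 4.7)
The plan is to prove the two inequalities separately; neither should be hard, as both amount to direct bookkeeping on the coefficients $a_{i_1,\dotsc,i_k} = h(e_{i_1},\dotsc,e_{i_k})$ with respect to a fixed unitary basis $e_1,\dotsc,e_{\dim E}$ of~$E$.

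For the right-hand inequality $\|h\|_\Frob \leq (\dim E)^{k/2} \tn{h}$, I would simply observe that each basis vector $e_i$ is unit, so by the definition of the operator norm, $\|a_{i_1,\dotsc,i_k}\| \leq \tn{h}$ for every multi-index. Summing the $(\dim E)^k$ terms in the defining sum~\eqref{eq:4} and taking a square root gives the bound.

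For the left-hand inequality $\tn{h} \leq \|h\|_\Frob$, I would fix unit vectors $x_1,\dotsc,x_k \in \bS(E)$, write each $x_j = \sum_{i} x_j^{(i)} e_i$ with $\sum_i |x_j^{(i)}|^2 = 1$, and expand by multilinearity:
\[
  h(x_1,\dotsc,x_k) = \sum_{i_1,\dotsc,i_k} x_1^{(i_1)}\dotsb x_k^{(i_k)}\, a_{i_1,\dotsc,i_k}.
\]
Apply the triangle inequality in $F$ and then Cauchy--Schwarz on the indexing set $\{1,\dotsc,\dim E\}^k$ to get
\[
  \|h(x_1,\dotsc,x_k)\| \leq \bigg( \sum_{i_1,\dotsc,i_k} |x_1^{(i_1)}|^2 \dotsb |x_k^{(i_k)}|^2 \bigg)^{1/2} \|h\|_\Frob = \|x_1\|\dotsb\|x_k\|\, \|h\|_\Frob = \|h\|_\Frob,
\]
where the factorization of the first square root uses that the sum decouples as $\prod_j \sum_{i_j} |x_j^{(i_j)}|^2$. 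Taking the supremum over unit tuples yields $\tn{h} \leq \|h\|_\Frob$.

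There is no real obstacle here: the only small care needed is the Cauchy--Schwarz step, which must be set up so that the coefficient part decouples into a product of the unit norms $\|x_j\|^2$. Basis independence of $\|h\|_\Frob$, asserted without proof after~\eqref{eq:4}, is the classical fact that Frobenius norm is unitarily invariant on each factor; it is not needed for this lemma but implicitly justifies working in an arbitrary unitary basis throughout.
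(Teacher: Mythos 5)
Your proof is correct, but it takes a different route from the paper's. The paper curries $h$ as a map $E \to (E \to \dotsb (E \to F))$ and argues by induction on $k$, invoking the standard comparison between Frobenius and operator norms in the linear ($k=1$) case at each stage. You instead give a direct, self-contained computation: the right inequality by summing the $(\dim E)^k$ coefficient bounds $\|a_{i_1,\dotsc,i_k}\| \leq \tn{h}$, and the left inequality by expanding $h(x_1,\dotsc,x_k)$ via multilinearity and applying triangle inequality plus Cauchy--Schwarz, with the crucial observation that the squared-coefficient sum factorizes as $\prod_j \|x_j\|^2$. Your approach has the advantage of not presupposing the linear case (which is, in any event, just the $k=1$ instance of the same Cauchy--Schwarz argument), whereas the paper's currying argument is more conceptual and makes the inductive structure of the tensor norms explicit. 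Both are elementary and correct; the Cauchy--Schwarz step you flag as the only delicate point is indeed set up correctly, since the product over $j$ of the factor sums $\sum_{i_j} |x_j^{(i_j)}|^2$ equals $1$ for unit vectors.
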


\begin{proof}
  This is better seen if we consider~$h$ as a map~$E \to ( E \to \dotsb (E\to
  F))$. The claim follows from an induction on~$k$ and the usual
  comparison between the Frobenius and the operator norm.
\end{proof}

% \begin{proof}
%   This is well known when~$k = 1$.
%   Assume $k > 1$ and let~$x_1,\dotsc,x_k \in \bS(E)$.
%   Writing~$x_1 = \sum_{i=1}^n c_i e_i$ we have
%   \begin{align*}
%     \| h(x_1,\dotsc,x_k) \|^2 &= \left\| \sum_{i=1}^k c_i h(e_i, x_2,\dotsc,x_k) \right\|^2 \\
%                               &\leq \underbrace{\left( \sum_{i=1}^k |c_i|^2 \right)^{\frac12}}_{=1} \left( \sum_{i=1}^n \| h(e_i, -)\|_M^2 \right)^{\frac12}.  
%   \end{align*}
%   By induction, we may assume that~$\|h(e_i,-)\|_M \leq \|h(e_i, -)\|_\Frob$ and we
%   compute easily that \[ \sum_{i=1}^n \|h(e_i,-)\|^2_\Frob = \|h\|_\Frob^2. \]
%   This proves the first inequality.

%   Concerning the second one, we write, by induction again,
%   \begin{align*}
%     \|h\|_\Frob^2 = \sum_{i=1}^n \|h(e_i,-)\|^2_\Frob 
%               \leq \sum_{i=1}^n n^{k-1} \|h(e_i,-)\|^2_M
%               \leq n^k \|h\|^2_M,
%   \end{align*}
%   and this concludes the proof.
% \end{proof}

The following lemma relates the Weyl norm of a homogeneous polynomial with the Frobenius
norm of an appropriate higher derivative.
Recall that the Weyl norm of a homogeneous polynomial of degree~$k$ is defined
by
\begin{equation}\label{eq:41}
 \bigg\|\sum_{j_0+\dotsb+j_n = k} c_{j_0,\dotsc,j_{n}} x_0^{j_0}\dotsm
    x_n^{j_n} \bigg\|^2_W \eqdef \sum_{j_0+\dotsb+j_n = k}\frac{j_0! \dotsb j_{n}!}{k!}
  | c_{j_0,\dotsc,j_{n}}|^2.
\end{equation}

\begin{lemma}\label{lem:weyl-frob-norms}
  For any homogeneous polynomial~$p(x_0,\dotsc,x_n)$ of degree~$k$,
  \begin{equation*}
    \|p\|_W = \left\| \tfrac{1}{k!}\ud_0^k p \right\|_\Frob.
  \end{equation*}
\end{lemma}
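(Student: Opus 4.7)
The plan is to prove the identity by direct computation in the standard basis $e_0, \dotsc, e_n$ of $\bC^{n+1}$. Write $p = \sum_{j_0 + \dotsb + j_n = k} c_{j_0,\dotsc,j_n} x_0^{j_0} \dotsb x_n^{j_n}$ and set $P \eqdef \frac{1}{k!} \ud_0^k p$, which is a symmetric $k$-multilinear form from $(\bC^{n+1})^k$ to $\bC$. Since the standard basis is unitary, the definition~\eqref{eq:4} gives $\|P\|_\Frob^2 = \sum_{(i_1,\dotsc,i_k) \in \{0,\dotsc,n\}^k} |P(e_{i_1},\dotsc,e_{i_k})|^2$, and the goal is to recognize this sum as $\|p\|_W^2$ from~\eqref{eq:41}.

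The key step is a single partial-derivative computation. For a tuple $(i_1, \dotsc, i_k)$, let $\alpha_\ell \eqdef |\{s : i_s = \ell\}|$, so that $\alpha_0 + \dotsb + \alpha_n = k$. Then
\begin{equation*}
  P(e_{i_1},\dotsc,e_{i_k}) = \tfrac{1}{k!}\, \partial_{i_1} \dotsb \partial_{i_k} p(0) = \tfrac{1}{k!}\, \partial_{0}^{\alpha_0} \dotsb \partial_{n}^{\alpha_n} p(0).
\end{equation*}
Applying the differential operator $\partial_{0}^{\alpha_0} \dotsb \partial_{n}^{\alpha_n}$ term-by-term to $p$ and evaluating at $0$ selects precisely the monomial with $j_\ell = \alpha_\ell$, producing the value $\alpha_0! \dotsb \alpha_n!\, c_{\alpha_0,\dotsc,\alpha_n}$. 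Hence $P(e_{i_1},\dotsc,e_{i_k}) = \frac{\alpha_0!\dotsb\alpha_n!}{k!}\, c_{\alpha_0,\dotsc,\alpha_n}$.

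Finally, group the $(n+1)^k$ tuples $(i_1,\dotsc,i_k)$ by their occurrence vector $(\alpha_0,\dotsc,\alpha_n)$. The number of tuples with a given vector is the multinomial coefficient $\binom{k}{\alpha_0,\dotsc,\alpha_n} = \frac{k!}{\alpha_0! \dotsb \alpha_n!}$, so
\begin{equation*}
  \|P\|_\Frob^2 = \sum_{\alpha_0 + \dotsb + \alpha_n = k} \frac{k!}{\alpha_0!\dotsb\alpha_n!} \left(\frac{\alpha_0!\dotsb\alpha_n!}{k!}\right)^2 |c_{\alpha_0,\dotsc,\alpha_n}|^2 = \sum_{\alpha_0+\dotsb+\alpha_n=k} \frac{\alpha_0!\dotsb\alpha_n!}{k!} |c_{\alpha_0,\dotsc,\alpha_n}|^2,
\end{equation*}
which is exactly $\|p\|_W^2$ by~\eqref{eq:41}. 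The computation is entirely elementary; there is no real obstacle beyond correctly tracking the combinatorial factors and the cancellation between the multinomial coefficient and the squared factorial ratio.
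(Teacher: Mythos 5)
Your proposal is correct and follows essentially the same computation as the paper's own proof: evaluate $\frac{1}{k!}\ud_0^k p$ on basis tuples to get $\frac{\alpha_0!\dotsm\alpha_n!}{k!}\,c_{\alpha_0,\dotsc,\alpha_n}$, then group the $(n+1)^k$ tuples by occurrence vector and use the multinomial count $\frac{k!}{\alpha_0!\dotsm\alpha_n!}$ to match Weyl's norm. The only differences are notational (your $\alpha$ is the paper's $j$).
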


\begin{proof}
  For any $0\leq i_1,\dotsc,i_k \leq n$,
  \begin{align*}
    \frac{1}{k!} \ud_0^k p(e_{i_1},\dotsc,e_{i_k})
    &= \left. \frac{1}{k!} \frac{\partial^k p}{\partial x_{i_1} \dotsm \partial x_{i_k}}  \right|_{x=0} \\
    &= \frac{j_0! \dotsb j_{n}!}{k!} c_{j_0,\dotsc,j_{n}},
  \end{align*} 
  where~$j_m$ is the number of indices~$i_*$ that are equal to~$m$
  and where~$c_{j_0,\dotsc,j_{n}}$ is the coefficient of~$x_0^{j_0}\dotsb
  x_n^{j_n}$ in~$p$.
  There are exactly~$\frac{k!}{j_0! \dotsb j_{n}!}$ $k$-uples $i_*$ that lead to
  a given $(n+1)$-uple $j_*$. Therefore,
  \begin{equation*}
    \left\| \tfrac{1}{k!} \ud_0^k p(e_{i_1},\dotsc,e_{i_k}) \right\|^2_\Frob = \sum_{j_0+\dotsb+j_n = k} \frac{j_0! \dotsb j_{n}!}{k!} | c_{j_0,\dotsc,j_{n}}|^2,
  \end{equation*}
  and this is exactly~$\|p\|^2_W.$
\end{proof}

\subsubsection{The $\gamma$ number with Frobenius norms}
\label{sec:gamma-number-with}

For a homogeneous polynomial~$f : \bC^{n+1} \to \bC$, we define
\begin{equation}\label{eq:36}
  \gamma_\Frob(f,z) \eqdef
  \begin{cases}
    \sup_{k\geq 2} \left(\tfrac{1}{k!} \left\| \ud_zf \right\|^{-1} \left\| \ud_z^kf
    \right\|_\Frob\right)^{\frac{1}{k-1}} & \text{if $\ud_zf$ is
    nonzero,} \\
  \infty & \text{otherwise},
  \end{cases}
\end{equation}
and for a homogeneous polynomial system~$F = (f_1,\dotsc,f_r) \in \cH[r]$, we define
\begin{equation*}
\hat\gamma_\Frob(F,z) \eqdef \kappa( F, z) \left(\gamma_\Frob(f_1, z)^2 + \dotsb + \gamma_\Frob(f_r, z)^2 \right)^{\frac12}.
\end{equation*}
Compare with the definitions of~$\gamma$
and~$\hat \gamma$, \S \ref{sec:splitgamma}.

In order to use Algorithm~\ref{algo:hc}
with $\hat\gamma_\Frob$ as~$g$, we must check~\ref{item:1} and~\ref{item:2}.
By Lemma~\ref{lem:lip-cond-split}, it is enough to check~\ref{item:4},
\ref{item:3} and~\ref{item:7} for~$\gamma_\Frob$ of a single polynomial.
The condition~\ref{item:4}, that is~$\gamma \leq \gamma_\Frob$, is clear by
Lemma~\ref{lem:frob-op-compar}.
The condition~\ref{item:7}, that is~$\gamma_\Frob(u\cdot f, z) = \gamma_\Frob(f,
u^{-1} z)$ follows from the unitary invariance of the Frobenius norm. The following lemma shows \ref{item:3}.

\begin{lemma}\label{lem:gamma-frob-lip}
  For any polynomial~$f\in H_d$, the function~$x \in
  \Proj\mapsto 1/\gamma_\Frob(f,x)$ is $5$-Lipschitz continuous.
\end{lemma}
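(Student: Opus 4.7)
The proof mirrors that of Proposition~\ref{prop:gamma-lip}. It suffices to establish the Frobenius-norm analogue of inequality~\eqref{eq:3}:
\[ \gamma_\Frob(f,y) \leq \gamma_\Frob(f,x)\, q\!\left(d_\bP(x,y)\gamma_\Frob(f,x)\right), \qquad q(u)\eqdef\tfrac{1}{(1-u)(1-4u+2u^2)}, \]
valid whenever $d_\bP(x,y)\gamma_\Frob(f,x) < 1-1/\sqrt{2}$. Once this is in hand, the expansion $q(u) = 1+5u+O(u^2)$ yields $\|\ud \gamma_\Frob(f,\cdot)\| \leq 5\,\gamma_\Frob(f,\cdot)^2$ by exactly the $\limsup$ argument already used at the end of the proof of Proposition~\ref{prop:gamma-lip}, which is equivalent to $1/\gamma_\Frob(f,\cdot)$ being $5$-Lipschitz continuous on~$\Proj$.

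To obtain the Frobenius analogue of~\eqref{eq:3}, I would revisit the standard Dedieu-style derivation. That derivation expresses $\tfrac{1}{k!}\ud_y^k f$ as a Taylor series in $y-x$,
\[ \tfrac{1}{k!}\ud_y^k f = \sum_{j\geq 0} \tbinom{k+j}{j} \tfrac{1}{(k+j)!}\,\ud_x^{k+j} f \cdot (y-x)^{\otimes j}, \]
and bounds the norms term-by-term using the submultiplicativity $\tn{A(v,\cdot,\dotsc,\cdot)} \leq \tn{A}\|v\|$ for a multilinear map $A$ together with the unitary invariance of the norm (to pass, in the projective setting, between representatives of $x$ and $y$ in $\bS(\bC^{n+1})$). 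The only properties of the norm used in this chain of estimates are these two. Both carry over to $\|\cdot\|_\Frob$: unitary invariance is immediate from~\eqref{eq:4}, and the contraction inequality follows from Cauchy--Schwarz, since, writing $A$ in a unitary basis with coefficients $a_{i_1,\dotsc,i_k}$,
\[ \|A(v,\cdot,\dotsc,\cdot)\|_\Frob^2 = \sum_{i_2,\dotsc,i_k}\bigg\|\sum_{i_1} v_{i_1}\,a_{i_1,\dotsc,i_k}\bigg\|^2 \leq \|v\|^2 \sum_{i_1,\dotsc,i_k}\|a_{i_1,\dotsc,i_k}\|^2 = \|v\|^2\,\|A\|_\Frob^2. \]

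The main obstacle is therefore purely bookkeeping: checking that every step of the classical proof of~\eqref{eq:3} reduces to the two properties above and that no additional specific feature of $\tn{\cdot}$ is needed. Since the argument is a chain of Taylor bounds, the substitution $\tn{\cdot} \rightsquigarrow \|\cdot\|_\Frob$ goes through verbatim and yields the stated $5$-Lipschitz bound for~$1/\gamma_\Frob(f,\cdot)$.
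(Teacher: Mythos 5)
Your approach is correct in outline but takes a genuinely different route from the paper. You propose to prove a Frobenius analogue of Dedieu's finite-distance estimate~\eqref{eq:3} and then extract the Lipschitz constant by the same $\limsup$ argument as in Proposition~\ref{prop:gamma-lip}; your key observation, that the contraction inequality $\|A(v,\cdot,\dotsc,\cdot)\|_\Frob \leq \|v\|\,\|A\|_\Frob$ holds for the Frobenius norm via Cauchy--Schwarz, is correct and is exactly what is needed to push the Taylor-series bounds through. The paper, by contrast, bypasses Dedieu's bound entirely: it differentiates $\gamma_\Frob$ along a path $(x_t)$ in $\bS(\bC^{n+1})$, sets $a_t = \|\ud_{x_t}f\|^{-1}$ and $(B_k)_t = \tfrac{1}{k!}a_t\,\ud_{x_t}^k f$, computes $\dot a$ and $\dot B_k$ (using the one inequality $\tn{\ud^2_x f}\leq\|\ud^2_x f\|_\Frob$ from Lemma~\ref{lem:frob-op-compar}), and reads off the Lipschitz coefficient $\tfrac{k+3}{k-1}\leq 5$ directly. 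The paper's infinitesimal computation is self-contained and makes the origin of the constant transparent, whereas yours inherits the constant from the expansion $q(u)=1+5u+O(u^2)$; both give $5$, and one can check (by redoing the chain $\|\ud_y f\|^{-1}\|\ud_x f\|\leq (1-u)^2/\psi(u)$, $\tfrac{1}{k!}\|\ud^k_y f\|_\Frob\leq \|\ud_x f\|\gamma_\Frob^{k-1}(1-u)^{-(k+1)}$, sup over $k$) that the constants do agree. The one weak point of your write-up is that the central claim --- that Dedieu's derivation uses \emph{only} unitary invariance and the contraction inequality --- is asserted rather than verified; it is true for a single polynomial, where the pseudo-inverse reduces to the scalar $\|\ud_z f\|^{-1}$ and the compatibility of $\|\cdot\|_\Frob$ with the Hermitian norm on covectors is automatic, but a referee would want to see that bookkeeping done rather than delegated.
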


\begin{proof}
  Let~$(x_t)_{0\leq t\leq 1}$ be a differentiable path in~$\bS(\bC^{n+1})$. Let~$a_t
  \eqdef \|\ud_{x_t} f\|^{-1}$ and~$(B_k)_t \eqdef \frac{1}{k!} a_t d^{k}_{x_t}
  f$, so that $(B_k)_t$ is a multilinear map~$(\bC^{n+1})^k\to \bC$.
  From now on, we drop the index~$t$ and denote the derivative with respect
  to~$t$ with a dot or with~$\frac{\ud}{\ud t}$. Writing~$a$ as~$\langle \ud_x
  f, \ud_xf \rangle^{-\frac12}$, we compute that
  \begin{equation}\label{eq:32}
    \dot a = - a\, \Re \langle a\, \ud_x^2 f(\dot x), a\, \ud_x f \rangle,
  \end{equation}
  where~$\Re$ denotes the real part, and the Cauchy--Schwartz inequality
  implies that
  \begin{equation}\label{eq:33}
    |\dot a| \leq a \, \|a\, \ud_x^2 f(\dot x)\|,
  \end{equation}
  noting that $\|a\, \ud_x f\| = 1$.
  By definition of~$a$ and~$\gamma(f, x)$,
  \begin{align*}
    \| a\, \ud_x^2 f(\dot x) \|
    &\leq \| \ud_x f \|^{-1} \tn{\ud_x^2 f} \|\dot x\| \\
    &\leq \| \ud_x f \|^{-1} \|\ud_x^2 f\|_\Frob \|\dot x\|, &&\text{by Lemma~\ref{lem:frob-op-compar},}\\
    &\leq 2 \gamma_\Frob(f, x) \|\dot x\|, &&\text{by definition of~$\gamma_\Frob(f,x)$,}
  \end{align*}
  which implies, in combination with~\eqref{eq:33}, that
  \begin{equation}\label{eq:40}
    | \dot a | \leq 2 a \gamma_\Frob(f,x) \|\dot x\|.
  \end{equation}
  We note that~$\tdert \ud_x^k f = \ud_x^{k+1} f(\dot x)$,
  therefore,
  \begin{equation}\label{eq:34}
    \dot B_k = \dot a a^{-1} B_{k} + (k+1) B_{k+1}(\dot x),
  \end{equation}
  and it follows, since for any~$l$, $\|B_l\|_\Frob \leq
  \gamma_\Frob(f,x)^{l-1}$, that
  \begin{align*}
    \abs{\tdert \| B_k \|_\Frob}
    &\leq \|\dot B_k\|_\Frob, \quad \text{as $\|-\|_\Frob$ is 1-Lipschitz continuous,}\\
    &\leq 2 \gamma_\Frob(f,x) \|B_k\|_\Frob \|\dot x\| + (k+1)\|B_{k+1}\|_\Frob \|\dot x\|, \\
   &&  \text{\llap{ by~\eqref{eq:34} and~\eqref{eq:40},}} \\
    &\leq \diffblock{\left( 2 \gamma_\Frob(f,x)^k + (k+1) \gamma_\Frob(f,x)^k \right) \|\dot x\|} \\
    &\leq \diffblock{(k+3) \gamma_\Frob(f,x)^{k} \|\dot x\|}.
  \end{align*}
  It follows that
  \begin{equation}\label{eq:45}
  \abs{\tdert \| B_k \|_\Frob^{\frac{1}{k-1}}} = \frac{1}{k-1} \|B_k
    \|_\Frob^{\frac{1}{k-1}-1} \abs{\tdert \| B_k \|_\Frob} \leq \frac{k+3}{k-1}
    \frac{\gamma_\Frob^{k+1}}{\|B_k\|_\Frob} \|\dot x\|.
  \end{equation}
  By definition, $\gamma_\Frob$ is the supremum of all~$\|B_k\|_\Frob^{1/(k-1)}$
  ($k\geq 2$), finitely many of which are nonzero, so at a given time~$t$,
  there is some~$k$ such that
  \begin{equation}\label{eq:46}
    \gamma_\Frob(f,x) = \|B_k\|_\Frob^{1/(k-1)}
  \end{equation}
  in a (one
  sided) neighborhood of $t$. Therefore $\dot \gamma_\Frob=\tdert \| B_k
  \|_\Frob^{1/(k-1)}$ and the computation above shows that
  \begin{align*}\label{eq:35}
    \abs{\dot \gamma_\Frob} &\leq  \frac{k+3}{k-1} \frac{\gamma_\Frob^{k+1}}{\|B_k\|_\Frob} \|\dot x\|, && \text{by \eqref{eq:45},} \\
                            &= \frac{k+3}{k-1} \frac{\gamma_\Frob^{k+1}}{\gamma_\Frob^{k-1}} \|\dot x\|, && \text{by \eqref{eq:46}},\\
                            &= \frac{k+3}{k-1} \gamma_\Frob^2 \|\dot x\| \\
                            &\leq 5 \gamma^2_\Frob \|\dot x\|, && \text{because $k\geq 2$.}
  \end{align*}
  As a function on~$\Proj$, $\ud_x \gamma_\Frob(f, -) (\dot x) = \dot\gamma_\Frob$,
  so that
  \[ \abs{\ud_x \gamma_\Frob(f, -) (\dot x) } \leq 5 \gamma_\Frob(f, x)^2 \|\dot x\|.\]
  Since the inequality holds for any differentiable path, covering all possible
  values of~$\dot x$, it follows that $\|\ud_x \gamma_\Frob(f, -)\| \leq 5 \gamma_\Frob(f,
  x)^2$.
  Consequently, $\|\ud_x \gamma_\Frob(f, -)^{-1}\| \leq 5$ and the claim
  follows.
\end{proof}

\subsection{Implementation details, complexity}
\label{sec:impl-deta-compl}

\subsubsection{Computational model}
\label{sec:computational-model}

We use the Blum--Shub--Smale model \parencite{BlumShubSmale_1989} extended with
a ``6th type of node'', as did \textcite{ShubSmale_1996}. Unlike Shub and Smale,
we will apply it to univariate (or rather homogeneous bivariate) polynomials
only. A node of this type has the following behavior. If it is given as input a
homogeneous polynomial~$f\in\bC[x,y]$ and an approximate zero~$z\in\bP^1$
of~$f$, with associated zero~$\zeta$, it outputs~$\zeta$. In any other case, it
fails. There is no need to specify how it fails because we will make sure that
this will not happen.

From the practical point of view, given a point~$z$ which approximates a
zero~$\zeta$ of a homogeneous polynomial~$f\in\bC[x,y]$, one can refine the
approximation to obtain~$d_\bP(z,\zeta) \leq \epsilon$ in~$\log_2 \log_2
\frac{\pi}{\epsilon}$ Newton's iterations. For most practical purpose, this
looks like infinite precision. In that sense, the 6th type of node does not add
much power.

Do we really need a 6th type of node? The continuation method proposed here uses
a start system defined in terms of the zeros of some homogeneous bivariate
polynomials. Naturally, the algorithm would also work with approximate zeros
only. However, if we do it this way, then the distribution of the start system
is not easily described, it is only close to a nice distribution. I showed
\parencite{Lairez_2017} how to deal with the complexity analysis in an analogue
situation but it is too technical an argument for the little value it adds.

Interval arithmetic gives another way to remove the need for this extra type of
node: wherever an exact zero is expected, we use bounding boxes instead and
perform the subsequent operations with interval arithmetic. If the precision
happens to be insufficient, we refine the bounding boxes with Newton's iteration
and start over the computation. The convergence of Newton's iteration is so fast
that even with naive estimations of the numerical stability, the number of start
over will be moderate. However, this is no less technical to formalize.

For convenience, we will also assume the ability to compute fractional powers of
a positive real number at unit cost. This will allow us to compute 
Hermitian norms and the numbers~$\gamma_\Frob$ and~$\hat\gamma_\Frob$ exactly.

\subsubsection{Continuation paths in the unitary group}
\label{sec:interp-paths-unit}

While geodesics in~$\cU$ are a
natural choice for continuation paths, see~\S \ref{sec:yet-anoth-cont}, they are not easy to compute
in the BSS model. We can describe  more elementary continuation paths using Householder's
reflections. For any 1-dimensional subspace~$l\subset\bC^{n+1}$ and any~$\theta\in \bR$, let~$R(l,\theta) \in U(n+1)$ be
the unique map such that~$R(l,\theta)|_l = e^{i\theta} \Id_l$ and
$R(l,\theta)|_{l^\perp} = \mop{id}_{l^\perp}$. Note that for the angle~$\pi$,
$R(l,\pi)$ is a reflection.
The computation of a matrix multiplication~$A R(l, \theta)$, for any~$A\in
U(n+1)$, requires only~$O(n^2)$ operations because~$R(l, \theta) -
\mop{id}$ has rank~$1$ and can be written~$v v^*$ for some~$v\in
\bC^{(n+1)\times 1}$.

Given a unitary matrix~$v\in U(n+1)$, the procedure of
\textcite{Householder_1958} (with the necessary changes in the complex case)
decomposes~$v$ as
$v = e^{i\alpha} R(l_1, \pi) \dotsb R(l_n,\pi)$,
for some~$\alpha \in [-\pi,\pi]$, with~$O(n^3)$ operations.
One can define the path
\[ w_t \eqdef e^{i \tfrac{t}{\tau} \alpha} R(l_1, \tfrac{t}{\tau} \pi) \dotsb
  R(l_n, \tfrac{t}{\tau} \pi), \]
where~$\tau^2 \eqdef \frac12 \left( \alpha^2 + n \pi^2 \right) \leq
\frac{n+1}{2} \pi^2$. Given~$\bfu,\bfv \in \cU$, we define a $1$-Lipshitz continuous
path~$(\bfw_t)_{t\geq 0}$ from~$\one_\cU$ to~$\bfv^{-1}\bfu$, component-wise with the method
above. It reaches~$\bfv^{-1}\bfu$ at~$t =\sqrt{\frac{n(n+1)}{2}} \pi < 4n$.
To compute~$\bfw_t$ on a BSS machine, we can replace the trigonometric
functions with any other functions parametrizing the circle.
This construction satisfies the three conditions~\ref{item:9}--\ref{item:6} from~\S\ref{sec:yet-anoth-cont}.

For a given~$t$, the cost of computing~$\bfw_t$ is dominated by the
the cost of multiplying by~$R(l, \theta)$ matrices: there are $n$ such
multiplications for each of the~$n$ components of~$\bfw_t$, that is~$O(n^4)$
operations.

\subsubsection{Computation of $\hat\gamma_\Frob$, cost of a continuation step}
\label{sec:comp-hatg-cost}

The reason for introducing~$\hat \gamma_\Frob$ is that we can compute it with low
complexity. By contrast, computing~$\hat \gamma$ is NP-hard because it involves the
computation of the spectral norm of symmetric multilinear maps, and there is no
polynomial-time approximation scheme, unless $\text{P}=\text{NP}$ \parencite[Theorem~10.2]{HillarLim_2013}.
Beware though, a too naive algorithm for computing~$\gamma_\Frob$
requires~$\Omega(N^2)$ operations, where $N$ is the input size (see \S\ref{sec:notat-basic-defin}).
Given a polynomial~$f \in H_d$ and a point~$z\in\bC^{n+1}$, the
evaluation~$f(z)$ and the vector~$\ud_z f$ can be computed with~$O(\dim H_d)$
operations \parencites[Lemma~16.32]{BurgisserCucker_2013}{BaurStrassen_1983}.

\begin{proposition}\label{prop:complexity-gammafrob}
  Given a homogeneous polynomial~$f \in H_d$ and~$z\in \Proj$, one can compute
  $\gamma_\Frob(f, z)$ with~$O(n d^2 \dim H_d)$ operations, as~$\dim H_d
  \to \infty$.
\end{proposition}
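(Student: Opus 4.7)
The plan is to reduce $\gamma_\Frob(f,z)$ to the Weyl norms of the homogeneous components of the Taylor expansion of $f$ at $z$, and then to compute these by iterated univariate Taylor shifts on $f$. Starting from the definition~\eqref{eq:36}, observe that $\ud_z^k f$ vanishes for $k > d$, so
\[ \gamma_\Frob(f,z) = \max_{2 \leq k \leq d}\left(\tfrac{1}{k!}\|\ud_z f\|^{-1}\|\ud_z^k f\|_\Frob\right)^{\frac{1}{k-1}}. \]
Set $g(y) \eqdef f(z+y)$ and let $p_k$ denote the degree-$k$ homogeneous component of $g$, so that $g = \sum_{k=0}^d p_k$ and $p_k(y) = \frac{1}{k!}\ud_z^k f(y,\dots,y)$. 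Since $p_k$ is itself homogeneous of degree $k$, polarisation gives $\ud_0^k p_k = \ud_z^k f$, and Lemma~\ref{lem:weyl-frob-norms} yields $\|p_k\|_W = \frac{1}{k!}\|\ud_z^k f\|_\Frob$; in particular $\|p_1\|_W = \|\ud_z f\|$. Consequently
\[ \gamma_\Frob(f,z) = \max_{2 \leq k \leq d}\left(\frac{\|p_k\|_W}{\|p_1\|_W}\right)^{\frac{1}{k-1}}, \]
and it suffices to compute $g$ and the Weyl norms of its homogeneous parts.

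The core step is to compute $g$ by applying the $n+1$ univariate shifts $x_i \mapsto x_i + z_i$ successively, starting from $f$. Store intermediate polynomials in a dense multi-index array of size $M \eqdef \binom{n+d+1}{n+1}$, indexing all monomials of total degree at most $d$ in $n+1$ variables. After each shift the polynomial remains of total degree at most $d$, hence occupies at most $M$ entries. Expanding the action of one shift on a single monomial $c_\alpha x^\alpha$ as
\[ \sum_{j=0}^{\alpha_i} c_\alpha \binom{\alpha_i}{j} z_i^{\alpha_i-j} x_i^j\, x^{\alpha - \alpha_i e_i} \]
costs $O(\alpha_i+1) = O(d)$ arithmetic operations, with the powers of $z_i$ and the binomial coefficients precomputed once in $O(d)$ operations; each term-addition into the output array is $O(1)$ thanks to direct multi-index addressing. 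Hence one shift costs $O(dM)$, and the $n+1$ shifts together cost $O(ndM)$. Using $M = \frac{n+d+1}{n+1}\dim H_d \leq (d+1)\dim H_d$, the total shift cost is bounded by $O(nd^2 \dim H_d)$.

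Once $g$ has been computed, the coefficients of $p_k$ for each $k \in \{1,\dotsc,d\}$ are the entries of $g$ indexed by multi-indices with $|\alpha|=k$, and by~\eqref{eq:41},
\[ \|p_k\|_W^2 = \sum_{|\alpha|=k} \frac{\alpha!}{k!} |c_\alpha|^2 \]
is evaluated in $O(M)$ operations total, with factorials precomputed in $O(d)$ operations. Forming $\gamma_\Frob(f,z)$ by $d-1$ fractional powers and a maximum adds $O(d)$ operations, under the convention of~\S\ref{sec:computational-model} that fractional powers cost $O(1)$. The bottleneck is the shift step; the main point to verify carefully is that each of the $O(dM)$ elementary term-additions in a shift is truly $O(1)$ in the dense multi-index representation, which is routine bookkeeping and does not hide any factor of $n$ or $d$.
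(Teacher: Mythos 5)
Your proof is correct and follows essentially the same strategy as the paper: reduce $\gamma_\Frob(f,z)$ via Lemma~\ref{lem:weyl-frob-norms} to the Weyl norms of the homogeneous components of $g = f(z+\cdot)$, and compute $g$ by $n+1$ successive single-variable Taylor shifts at a cost of $O(d^2 \dim H_d)$ each. The paper organizes each shift as $\dim H_d$ independent univariate shifts of degree at most $d$ (each $O(d^2)$), while you account for the same work monomial-by-monomial as $O(dM)$ with $M = O(d\dim H_d)$; these counts coincide, so the difference is purely presentational.
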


\begin{proof}
  We can compute $\|\ud_z f\|^{-1}$ in $O(\dim H_d)$ operations,
  so the main task is to compute~$\left\| \frac{1}{k!} \ud_z^kf \right\|_\Frob$ for all~$2 \leq
  k \leq d$, see~\eqref{eq:36}. Let~$g$ be the shifted polynomial~$g :x \in \bC^{n+1}\mapsto
  f(z+x)$, so that
  $\ud_0^k g = \ud_z^k f$.
  Let~$g_k$ denote the homogeneous component of degree~$k$ of~$g$.
  According to Lemma~\ref{lem:weyl-frob-norms},
  $\| \frac{1}{k!} \ud_0^k g\|_\Frob = \|g_k\|_W$.

  Let~$S$ denote the \emph{size} of~$g$, that is
  the number of coefficients in a dense nonhomogeneous polynomial of degree~$d$ in $n+1$
  variables. He have~$S = \binom{n+d+1}{d} \leq d \dim H_d$.
  In view of~\eqref{eq:41}, the computation of
  $\|g_k\|_W^2$ reduces to the computation of the coefficients
  of~$g_k$ in the monomial basis and the multinomial
  coefficients $\frac{i_0!\dotsb i_n!}{(i_0+\dotsb+i_n)!}$.
  We can compute them all with $O(S)$ operations thanks to the
  recurrence relation
  \[ \frac{i_0!\dotsb i_{n-1}! (i_n+1)!}{(i_0+\dotsb+i_n+1)!} =
    \frac{i_n+1}{i_0+\dotsb+i_n+1}\frac{i_0!\dotsb i_n!}{(i_0+\dotsb+i_n)!}. \]
  Therefore, we can compute~$\gamma_\Frob(f,z)$ in
  $O(S)$ operations given the coefficients of~$g$ in the monomial basis.
  
  To compute~$g$, we shift the variables one after the other.
  To compute the first shift~$f(x_0+z_0, x_1,\dotsc,x_n)$ (subsequent shifts are
  identical), we write
  \[ f = \sum_{i_1+\dotsb+i_n \leq d} p_{i_1,\dotsc,i_n}(x_0) x_1^{i_1}\dotsb
    x_n^{i_n}, \]
  where the~$p_{i_1,\dotsc,i_n}(x_0)$ are polynomials of degree at most~$d$.
  There are at most~$\dim H_d$ of them (the number of monomials of degree at most~$d$
  in~$n$ variables). Moreover, computing them requires only $O(S)$ copy
  operations and no arithmetic
  operation: their coefficients in the monomial basis are directly read from the coefficients
  of~$f$ in the monomial basis. One can compute $p_{i_1,\dotsc,i_n}(x_0+z_0)$ with~$O(d^2)$
  operations with a naive algorithm. Note that we can do this with only~$d^{1+o(1)}$
  operations using fast evaluation and interpolation algorithms
  \parencite{BostanChyzakGiustiEtAl_2017,GathenGerhard_1999}.
  All together, this is~$O(d^2 \dim H_d)$ operations.
  We recover
  $f(x_0+z_0, x_1,\dotsc,x_n)$ in $O(S)$ operations from the
  $p_{i_1,\dotsc,i_n}(x_0 + z_0)$ (and no arithmetic operation). We repeat this shift
  operation for each one of the $n+1$ variables and this gives the claim.
\end{proof}

\begin{corollary}\label{coro:kappa-gamma-complexity}
  Given~$F\in\cH[n]$, $\bfu\in\cU$ and~$z\in\Proj$, one can compute~$\kappa(\bfu
  \cdot F, z)$
  and~$\hat\gamma_\Frob(\bfu\cdot F, z)$ within a factor~$2$ in~$O(nD^2 N)$ operations as~$N\to \infty$.
\end{corollary}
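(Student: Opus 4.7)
The plan is to reduce the computation to three ingredients: the action of each $u_i^{-1}$ on the point $z$, the single-polynomial Frobenius gamma numbers $\gamma_\Frob(f_i,u_i^{-1}z)$, and the smallest singular value of the $n\times(n+1)$ matrix $L(\bfu,z)$. The unitary invariance of $\gamma_\Frob$ (condition~\ref{item:7}) lets us transfer the expensive polynomial computations from $\bfu\cdot F$ back to~$F$, where Proposition~\ref{prop:complexity-gammafrob} directly applies.

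First, for each~$i\in\{1,\dotsc,n\}$, compute $z_i\eqdef u_i^{-1}z\in\bC^{n+1}$, each using $O(n^2)$ operations, for a total of~$O(n^3)$. Next, for each~$i$, compute $\gamma_\Frob(f_i,z_i)$ using Proposition~\ref{prop:complexity-gammafrob}, which costs $O(nd_i^2\dim H_{d_i})$ operations; summing over~$i$ and using $d_i\leq D$ and $\sum_i\dim H_{d_i}=N$ gives $O(nD^2N)$. By condition~\ref{item:7} this equals $\gamma_\Frob(f_i\circ u_i^{-1},z)$, so we can form the scalar $S\eqdef\bigl(\sum_{i}\gamma_\Frob(f_i,z_i)^2\bigr)^{\frac12}$ in~$O(n)$ additional operations.

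Third, assemble the matrix $L(\bfu,z)$: by the chain rule, $\ud_z(f_i\circ u_i^{-1})=\ud_{z_i}f_i\circ u_i^{-1}$, so compute $\ud_{z_i}f_i$ in $O(\dim H_{d_i})$ operations by the Baur--Strassen theorem, compose with~$u_i^{-1}$ in $O(n^2)$ operations, compute its norm, and divide, as in~\eqref{eq:25}. The total cost is $O(N+n^3)$, which is $O(nD^2N)$. Finally, approximate $\tn{L(\bfu,z)^\dagger}$ within a factor of~$2$, namely the reciprocal of the smallest singular value of the $n\times(n+1)$ matrix $L(\bfu,z)$: a QR-based bidiagonalization followed by a few steps of inverse power iteration on $L(\bfu,z)L(\bfu,z)^*$ achieves this in $O(n^3)$ operations (the factor-of-$2$ tolerance is precisely what makes this feasible in the BSS model, where exact singular values are not computable). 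Multiply the resulting approximation of~$\kappa(\bfu\cdot F,z)$ by~$S$ to get $\hat\gamma_\Frob(\bfu\cdot F,z)$ within a factor of~$2$.

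Summing the costs, the dominant contribution is the~$O(nD^2N)$ computation of the~$\gamma_\Frob(f_i,z_i)$, while all other steps contribute~$O(n^3+N)$, which is absorbed since $N\geq n(n+1)$ so that $n^3\leq nN\leq nD^2N$. The main (mild) obstacle is the singular value approximation: one must argue that the smallest singular value of an $n\times(n+1)$ complex matrix can be approximated within a factor of~$2$ in~$O(n^3)$ BSS operations, which is standard but must be invoked carefully since exact singular values are generally not rational functions of the entries.
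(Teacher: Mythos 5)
Your proposal follows the same overall decomposition as the paper: push $u_i^{-1}z$ through, compute each $\gamma_\Frob(f_i,u_i^{-1}z)$ via Proposition~\ref{prop:complexity-gammafrob}, build $L(\bfu,z)$ with Baur--Strassen plus the $n$~matrix-vector products, and finish with a singular-value approximation in $O(n^3)$. The only place you deviate is the last step, and it is the one place you should tighten. You propose ``a few steps of inverse power iteration on $L(\bfu,z)L(\bfu,z)^*$'' to approximate the smallest singular value within a factor of~$2$; but inverse power iteration converges at a rate governed by the gap between the two smallest singular values, which can be arbitrarily small, so ``a few steps'' is not a bounded number of BSS operations. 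The paper instead tridiagonalizes $LL^*$ with Householder reflections and then invokes a result of Kahan (1966) that bounds the eigenvalues of a Hermitian tridiagonal matrix within a constant factor directly from its entries, with no iteration and hence a genuine $O(n^3)$ deterministic cost. You correctly flagged that this step ``must be invoked carefully''; the fix is to replace the iterative scheme with Kahan's deterministic bound (or any other method that certifies a factor-of-$2$ estimate in $O(n^3)$ operations without depending on a spectral gap). Everything else in your argument — the $O(n^3)$ for the $u_i^{-1}z$, the $O(nD^2N)$ for the $\gamma_\Frob$'s, the $O(N+n^3)$ for $L$, and the absorption $n^3\leq nN$ — matches the paper's accounting.
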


\begin{proof}
  The evaluation of~$\bfu \cdot F$ at a point~$z\in\bC^{n+1}$ can be computed as
  \[ \left( f_1(u_1^* z),\dotsc,f_n(u_n^* z) \right) \]
  with~$O(n^3)$ operations to compute the vectors~$u_i^* z$ and~$\sum_i O(\dim
  H_{d_i}) = O(N)$ additional operations to evaluate the polynomials~$f_i$.
  Therefore, the matrix~$\ud_z(\bfu\cdot F)$ be computed with~$O(N + n^3)$
  operations too \parencite{BaurStrassen_1983}.
  The computation of~$L(\bfu \cdot F, z)$ requires~$O(n^2)$ additional
  operations, following~\eqref{eq:25}. 
  Then, we can compute $\kappa$, that is the inverse of the least
  singular value of~$L(\bfu, z)$, within a factor of~$2$
  in $O(n^3)$ operations, using a
  tridiagonalization with Householder's reflections and a result by
  \textcite{Kahan_1966}.
  To compute~$\hat \gamma_\Frob$, it only remains to compute
  the~$\gamma_\Frob(u_i\cdot f, z) = \gamma_\Frob(f, u_i^* z)$, for $1\leq i\leq
  n$, which requires
  all together~$O(nD^2N + n^3)$ operations, by
  Proposition~\ref{prop:complexity-gammafrob}.
  The term~$n^3$ is~$O(nN)$, so this
  gives the claim.
\end{proof}

Since~$\kappa$ is defined as the operator norm of some
matrix, the exact computation is difficult in the BSS model.
One could use the Frobenius norm instead but computing~$\kappa$ within a
factor~2 is satisfactory, see Remark~\ref{remark:computation-step-size}.

\begin{corollary}\label{coro:complexity-splitgamma}
  In Algorithm~\ref{algo:hc} with $g = \hat \gamma_\Frob$,
  one continuation step can be performed in $O(n^2D^2N)$ operations.
  %Given a homogeneous polynomial system~$F \in \cH[n]$ and~$z\in\Proj$, one can compute 
  %$\kappa(F,z)$ and $\hat\gamma_\Frob(F,z)$ with in a factor~$2$ in~$O(nD^2 N)$ operations.
\end{corollary}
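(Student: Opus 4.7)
The plan is to inspect the pseudocode of Algorithm~\ref{algo:hc} and bound the cost of each elementary subtask of one iteration of the while-loop separately, before summing them and verifying that the result fits inside $O(n^2 D^2 N)$.

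A single step consists of three subtasks: evaluating $\bfw_t$ at the next value of the parameter, performing the projective Newton iteration $z \gets \cN_{\bfw_t \cdot F}(z)$, and computing the new step size, which only requires $\kappa(\bfw_t\cdot F, z)$ and $\hat\gamma_\Frob(\bfw_t\cdot F, z)$ up to a factor of~$2$ (see Remark~\ref{remark:computation-step-size}). First, by the Householder construction in \S\ref{sec:interp-paths-unit}, evaluating $\bfw_t$ costs $O(n^4)$ operations: each of the $n$ components is a product of $n$ elementary reflections, and each reflection is applied to a current unitary matrix in $O(n^2)$. Second, the step size is obtained in $O(nD^2 N)$ operations by Corollary~\ref{coro:kappa-gamma-complexity}. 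Third, the Newton iteration requires the vector $(\bfw_t\cdot F)(z)$ and the matrix $\ud_z(\bfw_t\cdot F)$, both computable in $O(N + n^3)$ operations (compose by $w_{t,i}^*$ in $O(n^3)$, evaluate the $f_i$ in $O(N)$ operations total, and apply Baur--Strassen for the derivative), followed by solving an $n \times n$ linear system in $O(n^3)$; this gives $O(N+n^3)$ for the Newton step.

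Summing, one step costs $O(n^4 + nD^2 N + N + n^3) = O(n^4 + nD^2 N)$ operations. To repackage this as $O(n^2 D^2 N)$, I would use the lower bound $N \geq n^2$, which holds under the running assumption $D \geq 2$: at least one degree~$d_i$ is~$\geq 2$, so $\binom{d_i+n}{n} \geq \binom{n+2}{2} \geq n^2/2$, and the other terms contribute at least~$(n-1)(n+1)$. Thus $n^4 \leq n^2 N \leq n^2 D^2 N$, and the cost reduces to $O(n^2 D^2 N)$, as claimed.

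I do not expect any real obstacle beyond careful bookkeeping: all the substantive work has already been done in Corollary~\ref{coro:kappa-gamma-complexity} (for $\kappa$ and $\hat\gamma_\Frob$) and in \S\ref{sec:interp-paths-unit} (for the path). The only delicate point is the argument that $n^4$ is dominated by $n^2 D^2 N$; it relies on the assumption $D \geq 2$ already in force throughout \S\ref{sec:numer-cont-algor} and on the trivial but crucial lower bound on~$N$ indicated above.
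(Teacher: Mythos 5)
Your proposal matches the paper's own proof: same decomposition of a step into path evaluation ($O(n^4)$), step-size computation ($O(nD^2N)$ via Corollary~\ref{coro:kappa-gamma-complexity}), and a Newton iteration ($O(n^3+N)$), with the same bookkeeping $n^4 \leq n^2 N \leq n^2 D^2 N$ to absorb the path cost. One small simplification: you do not actually need $D \geq 2$ to get $N \geq n^2$, since $\binom{d_i+n}{n} \geq n+1$ for every $d_i \geq 1$ already gives $N \geq n(n+1)$.
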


\begin{proof}
  A step boils down to: one evaluation of~$\hat \gamma_\Frob$ and~$\kappa$, that
  is~$O(nD^2N)$ operations by Corollary~\ref{coro:kappa-gamma-complexity};
  one evaluation of the continuation path, that is~$O(n^4) = O(n^2N^2)$ operations, see~\S \ref{sec:interp-paths-unit};
  and one Newton's iteration, that is~$O(n^3 + N) = O(nN)$ operations \parencite[Proposition~16.32]{BurgisserCucker_2013}.
\end{proof}

\subsection{Gaussian random systems}
\label{sec:gauss-rand-syst}

We conclude with the study of the average complexity of~$\mop{Solve}(F,
\one_\cU)$ when~$F$ is a \emph{Kostlan random system}, that is a standard normal
variable in~$\cH[n]$ endowed with Weyl's norm.
In view of Theorem~\ref{thm:complexity-uncouple}, it only remains to study the
average value of~$\gamma_\Frob(f, \zeta)$ when~$f$ is a Kostlan random
polynomial and~$\zeta$ a uniformly distributed zero of it.
To this purpose, the main tool is a corollary of a result by~\textcite{BeltranPardo_2011}.

\begin{proposition}\label{prop:bp-1eq}
  Let~$f \in H_d$ be a Kostlan random polynomial, let~$\zeta$ be a random
  uniformly distributed point in $\left\{ z\in\Proj\st f(z)=0 \right\}$ and
  let~$\bar\zeta\in\bC^{n+1}$ be a random uniformly distributed vector such
  that~$\|\bar \zeta\| =1$ and~$[\bar\zeta]=\zeta$.
  Then
  \begin{enumerate}[(i)]
  \item\label{item:10} $\frac{1}{\sqrt{d}} \ud_\zeta f$ is a standard normal variable in
    $(\bC^{n+1})^*$; and
  \item\label{item:11} given $\zeta$, the orthogonal projection of~$f$ on~$\left\{ g\in H_d \st
      g(\zeta) = 0 \text{ and } \ud_\zeta g = 0\right\}$ is a
    standard normal variable
    and is independent from~$\ud_\zeta f$.
  \end{enumerate}
\end{proposition}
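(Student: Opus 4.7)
The plan is to reduce to the case $\bar\zeta = e_0 \eqdef (1, 0, \dotsc, 0)$ by unitary invariance, and then exploit the orthogonality of monomials under the Weyl product to decompose~$f$ explicitly.

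First, the Weyl norm is invariant under the action~$u \cdot f \eqdef f \circ u^{-1}$ of~$U(n+1)$ on~$H_d$, so the Kostlan distribution is~$U(n+1)$-invariant, and the joint distribution of~$(f, \bar\zeta)$ is invariant under the diagonal action of~$U(n+1)$. Since~$U(n+1)$ acts transitively on the unit sphere of~$\bC^{n+1}$, the marginal of~$\bar\zeta$ is uniform and it suffices to establish both assertions conditionally on $\bar\zeta = e_0$; the general case follows by unitary transport.

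Next, the conditional distribution of~$f$ given that~$\zeta$ is a uniform zero of~$f$ and $\bar\zeta = e_0$ is the standard Gaussian distribution on the hyperplane $H_d^{e_0} \eqdef \left\{ g \in H_d \st g(e_0) = 0 \right\}$ endowed with the restriction of the Weyl inner product. This is the single-equation analogue of Beltrán--Pardo's sampling theorem and follows from the same coarea argument used in the proof of Theorem~\ref{thm:linearization}, applied to the solution variety associated with the single hypersurface~$V(f)$.

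Now decompose orthogonally $H_d^{e_0} = E_1 \oplus E_2$, where $E_1 \eqdef \mop{span}(x_0^{d-1}x_1, \dotsc, x_0^{d-1}x_n)$ and $E_2 \eqdef \left\{ g \in H_d \st g(e_0) = 0 \text{ and } \ud_{e_0} g = 0 \right\}$. Orthogonality is immediate because distinct monomials are Weyl-orthogonal by~\eqref{eq:41}, and one checks that $g\in E_2$ if and only if the coefficients of~$x_0^d$ and of each~$x_0^{d-1} x_i$ (for $i\geq 1$) vanish. Writing the standard Gaussian~$f$ on~$H_d^{e_0}$ as $f = f_1 + f_2$ with $f_1 \in E_1$ and $f_2 \in E_2$, the components are independent standard Gaussians on their respective spaces. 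Since $\|x_0^{d-1}x_i\|_W^2 = \frac{(d-1)!}{d!} = 1/d$, the family $(\sqrt{d}\,x_0^{d-1}x_i)_{1\leq i\leq n}$ is a Weyl-orthonormal basis of~$E_1$, and one can write $f_1 = \sum_{i=1}^n c_i \sqrt{d}\, x_0^{d-1} x_i$ with iid standard complex Gaussians~$c_i$. Direct computation gives $\partial_i f(e_0) = \sqrt{d}\, c_i$ for~$i\geq 1$, while $\partial_0 f(e_0) = d f(e_0) = 0$ by Euler's identity. This proves~\ref{item:10}, in the sense that $\frac{1}{\sqrt{d}}\ud_{e_0} f$ has iid standard Gaussian coordinates in the directions $e_1,\dotsc,e_n$ (the coordinate along~$e_0$ being forced to zero by Euler). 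The component~$f_2$ is, by definition, the orthogonal projection of~$f$ on~$E_2$; independence from~$f_1$ yields independence from~$\ud_{e_0} f$, and standard normality on~$E_2$ is inherited from~$f$, which proves~\ref{item:11}.

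The main obstacle is the second step, which precisely identifies the conditional distribution of~$f$ given the uniform zero~$\zeta$; everything else is a direct orthogonal decomposition combined with the explicit formula for the Weyl norm of monomials.
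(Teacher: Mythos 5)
Your proof has a real gap at the second step, and it is exactly the subtle point that the proposition is built around. You assert that, conditionally on $\bar\zeta = e_0$, the law of~$f$ is the \emph{standard} Gaussian on the hyperplane $H_d^{e_0} = \{g \in H_d : g(e_0) = 0\}$. This is false. Running the coarea formula on the incidence variety $V = \{(f,\bar\zeta) : f(\bar\zeta) = 0\}$ (projecting to~$\cH$ on one side and to the sphere on the other) produces a ratio of normal Jacobians equal to $\|\ud_{e_0}f\|^2$, so the conditional density of~$f$ on $H_d^{e_0}$ is proportional to $\|\ud_{e_0}f\|^2 \, e^{-\|f\|_W^2/2}$, not to $e^{-\|f\|_W^2/2}$. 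This is the single-polynomial analogue of the $|\det|^2$ weight in Beltr\'an--Pardo's theorem and in \parencite[Prop.~17.21]{BurgisserCucker_2013}. In particular the coordinates $c_1,\dots,c_n$ of the $E_1$-component are \emph{not} iid standard Gaussian conditionally on~$\bar\zeta$; their conditional law has density $\propto \|c\|^2 e^{-\|c\|^2/2}$. Your appeal to ``the same coarea argument used in the proof of Theorem~\ref{thm:linearization}'' does not apply: that theorem concerns the rigid solution variety $\cV \subset \cU \times \Proj$, and its computations have no bearing on the Jacobian of the projection $\{(f,\zeta) : f(\zeta)=0\} \to \cH$.

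What survives of your argument is the observation that the Jacobian weight $\|\ud_{e_0}f\|^2$ depends only on the $E_1$-component, so the conditional law factors as (weighted Gaussian on~$E_1$) $\times$ (standard Gaussian on~$E_2$). That gives item~(ii) directly. For item~(i) you then have to show that the weighted law on~$E_1$, transported along a uniform~$\bar\zeta$, has standard Gaussian marginal in $(\bC^{n+1})^*$ — which amounts to checking that a standard Gaussian linear form, conditioned on a uniformly random unit vector in its kernel, has exactly the density $\propto \|L\|^2 e^{-\|L\|^2/2}$ on the hyperplane. That is true, but at this point you have essentially reproved the Beltr\'an--Pardo density computation from scratch. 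The paper sidesteps all of this by appending $n-1$ random Gaussian linear forms to~$f$ so as to land in the square case, where the Beltr\'an--Pardo/B\"urgisser--Cucker result is available off the shelf, and then extracting the first row of the derivative and the first component of the $R_\zeta$-projection. If you want to keep your direct decomposition, you must insert the Jacobian factor and carry the weighted Gaussian through to the end; as written, the proof is not correct.
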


\begin{proof}
  Let~$\lambda_2, \dotsc,\lambda_n \in (\bC^{n+1})^*$ be independent Gaussian
  linear forms,
  so that~$F \eqdef (f, \lambda_2, \dotsc, \lambda_n)$ is a Kostlan random
  system of degree~$(d,1,\dotsc,1)$.
  Let~$\eta \in \Proj$ be a uniformly distributed random zero of~$F$.

  The zero set~$L$ of~$\lambda_2,\dotsc,\lambda_n$
  is a uniformly distributed random line independent from~$f$
  and~$\eta$ is uniformly distributed in~$V(f) \cap L$.
  By Corollary~\ref{coro:uniform-sampling}, $\eta$ is uniformly distributed
  in~$V(f)$, so we may assume that~$\zeta = \eta$.

  Let~$G$ the orthogonal projection of~$F$ on the subspace
  \[ R_\zeta \eqdef \left\{ G\in \cH \st G(\zeta) = 0 \text{ and } \ud_\zeta G =
      0\right\}. \]
  \Textcite[Theorem~7]{BeltranPardo_2011}, and
  \textcite[Prop.~17.21]{BurgisserCucker_2013} for the Gaussian case, proved that: the matrix
  $\mop{diag}(d^{-\frac12},1,\dotsc,1) \ud_{\bar \zeta} F$ is a standard normal
  variable in~$\bC^{n\times (n+1)}$; and conditionally on~$\zeta$, $G$ is a
  standard normal variable in~$R_\zeta$ and is independent from~$\ud_\zeta F$.
  The claim follows by considering the first row of~$\ud_{\bar\zeta} F$, that
  is~$\ud_{\bar\zeta} f$, and the first coordinate of~$G$.
\end{proof}

The following three lemmas deal with the average
analysis of~$\gamma_\Frob$.

\begin{lemma}\label{lem:bound-norm-derivative}
  Let~$f \in H_d$ and~$\zeta = [1:0:\dotsb:0]$.
  We write
  \[ f = \sum_{i=0}^d x_0^{d-i} g_i(x_1,\dotsc,x_n), \]
  for some uniquely determined homogeneous polynomials~$g_0,\dotsc,g_d$ of
  degrees~$0,\dotsc,d$ respectively.
  For any~$k\geq2$,
  \[     \left\| \tfrac{1}{k!} \ud^k_\zeta f \right\|_\Frob^2 = \binom{d}{k} \sum_{l = 0}^k
    \binom{d-l}{k-l} \left\|x_0^{d-l} g_l \right\|^2_W. \]
\end{lemma}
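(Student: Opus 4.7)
The plan is to reduce the computation of $\left\|\tfrac{1}{k!}\ud_\zeta^k f\right\|_\Frob$ to a Weyl norm computation via Lemma~\ref{lem:weyl-frob-norms}, then expand the shifted polynomial and exploit orthogonality of monomials under the Weyl inner product.

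First, let $p_k(x)$ denote the degree-$k$ homogeneous component (in $x$) of the Taylor expansion $f(\zeta + x)$; equivalently $p_k(x) = \tfrac{1}{k!}\ud_\zeta^k f(x,\dotsc,x)$, so $\tfrac{1}{k!}\ud_0^k p_k = \tfrac{1}{k!}\ud_\zeta^k f$ as symmetric multilinear forms, and hence $\|\tfrac{1}{k!}\ud_\zeta^k f\|_\Frob = \|p_k\|_W$ by Lemma~\ref{lem:weyl-frob-norms}. Writing $\zeta + x = (1+x_0, x_1,\dotsc,x_n)$ in the given decomposition of $f$ and using the binomial theorem on $(1+x_0)^{d-l}$, one extracts
\[ p_k(x) = \sum_{l=0}^{k} \binom{d-l}{k-l}\, x_0^{k-l}\, g_l(x_1,\dotsc,x_n). \]

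Next, observe that for distinct values of $l$, the summands $x_0^{k-l} g_l$ use disjoint sets of monomials (the power of $x_0$ differs), so they are orthogonal with respect to the Weyl inner product~\eqref{eq:41}. Therefore
\[ \|p_k\|_W^2 = \sum_{l=0}^k \binom{d-l}{k-l}^2 \|x_0^{k-l} g_l\|_W^2. \]
Moreover, for any homogeneous $g_l$ of degree $l$ with coefficients $c^{(l)}_{j_1,\dotsc,j_n}$, the definition of the Weyl norm gives $\|x_0^{a} g_l\|_W^2 = \sum_{j} \tfrac{a!\, j_1!\cdots j_n!}{(a+l)!} |c^{(l)}_j|^2$, so for any two valid exponents $a, a'$ we have a simple ratio. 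Applying this with $a=k-l$ and $a'=d-l$ yields
\[ \|x_0^{k-l}g_l\|_W^2 = \frac{(k-l)!\, d!}{k!\, (d-l)!}\, \|x_0^{d-l}g_l\|_W^2. \]

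Combining the two displays, the coefficient of $\|x_0^{d-l}g_l\|_W^2$ becomes
\[ \binom{d-l}{k-l}^2 \frac{(k-l)!\, d!}{k!\, (d-l)!} = \binom{d-l}{k-l}\cdot\frac{(d-l)!}{(k-l)!(d-k)!}\cdot\frac{(k-l)!\, d!}{k!\, (d-l)!} = \binom{d-l}{k-l}\binom{d}{k}, \]
which yields exactly the claimed formula. The only ``obstacle'' here is the binomial-coefficient bookkeeping at the end; the conceptual content is the orthogonality of the terms $x_0^{k-l}g_l$ in the Weyl inner product combined with the scaling relation between $\|x_0^a g_l\|_W^2$ for different $a$.
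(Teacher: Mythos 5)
Your proof is correct and follows essentially the same route as the paper: reduce to a Weyl norm via Lemma~\ref{lem:weyl-frob-norms}, extract the degree-$k$ homogeneous part of the shifted polynomial (your $p_k$ is the paper's $\tilde f_{(k)}$), use orthogonality of the $x_0^{k-l}g_l$ under the Weyl inner product, and apply the same scaling relation between $\|x_0^{k-l}g_l\|_W^2$ and $\|x_0^{d-l}g_l\|_W^2$. The only difference is cosmetic: you write the scaling factor as $\frac{(k-l)!\,d!}{k!\,(d-l)!}$ where the paper writes $\binom{d}{k}/\binom{d-l}{k-l}$.
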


\begin{proof}
  Let~$\tilde f \eqdef
  f(x_0+1,x_1,\dotsc,x_n)$ and let~$\tilde f_{(k)}$ be the homogeneous component
  of degree~$k$ of~$\tilde f$. We compute that
  \[ \tilde f_{(k)} = \sum_{l=0}^d \left[ (x_0+1)^{d-l} g_l \right]_{(k)}= \sum_{l = 0}^k \binom{d-l}{k-l} x_0^{k-l}
    g_l. \]
  % and after multiplication by~$x_0^{d-k}$ we have
  % \begin{equation*}
  %   x_0^{d-k} \tilde f_{(k)} = \sum_{l = 0}^k \binom{d-l}{k-l} x_0^{d-l} g_l,
  % \end{equation*}
  The terms of the sum are orthogonal for Weyl's inner product, and moreover $
  \big\| \tfrac{1}{k!} \ud^k_\zeta f \big\|_\Frob = \big\| \tilde f_{(k)}
  \big\|_W$ by Lemma~\ref{lem:weyl-frob-norms}, therefore
  \begin{equation*}\label{eq:11}
    \left\| \tfrac{1}{k!} \ud^k_\zeta f \right\|_\Frob^2 = \sum_{l = 0}^k
    \binom{d-l}{k-l}^2 \left\|x_0^{k-l} g_l \right\|^2_W.
  \end{equation*}
  Looking closely at the definition of Weyl's inner product
  reveals that
  \[ \left\| x_0^{k-l} g_l \right\|_W^2 =
    \frac{\binom{d}{d-l}}{\binom{k}{k-l}} \left\| x_0^{d-l} g_l \right\|_W^2
    = \frac{\binom{d}{k}}{\binom{d-l}{k-l}} \left\| x_0^{d-l} g_l \right\|_W^2, \]
  and the claim follows.
\end{proof}

\begin{lemma}\label{lem:expect-norm-derivative}
  Let~$f \in H_d$ be a Kostlan random polynomial and~$\zeta \in \Proj$ be a
  uniformly distributed zero of~$f$. For any~$k \geq 2$,
  \[ \bE \left[ \left\| \ud_\zeta f \right\|^{-2} \left\| \tfrac{1}{k!} \ud^k_\zeta f_i \right\|_\Frob^{2}
    \right] \leq \frac{1}{nd} \binom{d}{k} \binom{d+n}{k} \leq \left(
      \tfrac14 d^2(d+n)  \right)^{k-1} . \]
\end{lemma}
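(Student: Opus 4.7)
My plan is to use unitary invariance of the Kostlan distribution to condition on $\zeta = [1:0:\dotsb:0]$ and decompose $f = \sum_{i=0}^d x_0^{d-i} g_i(x_1,\dotsc,x_n)$ as in Lemma~\ref{lem:bound-norm-derivative}, where $g_0 = 0$ since $\zeta$ is a zero of $f$. That lemma then rewrites
\[ \|\tfrac{1}{k!}\ud_\zeta^k f\|_\Frob^2 = \binom{d}{k}\sum_{l=1}^k \binom{d-l}{k-l}\|x_0^{d-l} g_l\|_W^2, \]
reducing the problem to estimating the expectation of each $\|x_0^{d-l}g_l\|_W^2 / \|\ud_\zeta f\|^2$.

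For $l=1$, an explicit Weyl-norm computation yields the deterministic identity $\|x_0^{d-1} g_1\|_W^2 = \|\ud_\zeta f\|^2 / d$: the Kostlan weight on $x_0^{d-1} x_j$ exactly cancels the Weyl factor $1/d$ of that monomial, while the coefficients of $g_1$ recover the components of $\ud_\zeta f$ on $\bar\zeta^\perp$ up to $\sqrt d$. For $l \geq 2$, I invoke Proposition~\ref{prop:bp-1eq}: part~(ii) identifies $\sum_{l\geq 2} x_0^{d-l} g_l$ as the orthogonal projection of $f$ onto $R_\zeta$, hence as a standard normal in $R_\zeta$ independent of $\ud_\zeta f$, so $\bE[\|x_0^{d-l}g_l\|_W^2]$ equals the number of degree-$l$ monomials in $n$ variables, namely $\binom{l+n-1}{l}$, since the Kostlan basis is Weyl-orthonormal. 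Part~(i) gives $\bE[\|\ud_\zeta f\|^{-2}] = 1/(nd)$, because $d^{-1}\|\ud_\zeta f\|^2$ follows a Gamma$(n+1,1)$ law. Independence then yields $\bE[\|x_0^{d-l}g_l\|_W^2/\|\ud_\zeta f\|^2] = \binom{l+n-1}{l}/(nd)$ for $l\geq 2$, and the deterministic $l=1$ contribution $\binom{d-1}{k-1}/d$ fits this same pattern since $\binom{n}{1} = n$.

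Summing produces $\bE[\cdot] = \frac{\binom{d}{k}}{nd}\sum_{l=1}^k \binom{d-l}{k-l}\binom{l+n-1}{l}$, and the first inequality follows from the combinatorial identity $\sum_{l=0}^k \binom{d-l}{k-l}\binom{l+n-1}{l} = \binom{d+n}{k}$, with the omitted $l=0$ term $\binom{d}{k}$ absorbed into the slack. I would prove this identity via the generating-function manipulation $\sum_k t^k \sum_l \binom{d-l}{k-l}\binom{l+n-1}{l} = (1+t)^d (1 - t/(1+t))^{-n} = (1+t)^{d+n}$. The second inequality is a short induction on $k$: the base $k=2$ reduces to $(d-1)(d+n-1) \leq nd^2$, and the inductive step uses $(d-k)(d+n-k)/(k+1)^2 \leq d^2(d+n)/4$, valid for $k\geq 1, d\geq 1$. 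The main delicate point is pinning down the $1/(nd)$ value for $\bE[\|\ud_\zeta f\|^{-2}]$, which requires interpreting Proposition~\ref{prop:bp-1eq}(i) as endowing $\ud_\zeta f$ with an effective $n+1$ complex degrees of freedom---this is exactly what the coarea factor implicit in uniform-zero sampling size-biases it to produce.
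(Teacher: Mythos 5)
Your proof follows the same route as the paper's: decompose via Lemma~\ref{lem:bound-norm-derivative}, treat $l=1$ as a deterministic identity and $l\geq 2$ via Proposition~\ref{prop:bp-1eq}, and sum with the binomial identity. The structure and the final answer are right. Two remarks.

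First, there is a normalization slip in your two intermediate distributional claims, which compensate each other. With the paper's convention (a standard normal on a complex $k$-dimensional Hermitian space has unit real covariance, so $\bE[\|Z\|^2]=2k$), the quantity $d^{-1}\|\ud_\zeta f\|^2$ is $\chi^2_{2n+2}$-distributed, giving $\bE\bigl[\|\ud_\zeta f\|^{-2}\bigr]=\tfrac{1}{2nd}$, not $\tfrac{1}{nd}$; and $\bE\bigl[\|x_0^{d-l}g_l\|_W^2\bigr]$ is the \emph{real} dimension $2\binom{l+n-1}{l}$, not the complex dimension $\binom{l+n-1}{l}$. You use the ``unit complex variance'' convention for both, which is internally consistent and so your product $\tfrac{1}{nd}\binom{l+n-1}{l}$ agrees with the paper's $\tfrac{1}{2nd}\cdot 2\binom{l+n-1}{l}$, but as stated each intermediate value is off by a factor of~$2$ from what Proposition~\ref{prop:bp-1eq} actually gives under the paper's definition of a standard normal variable and of Kostlan's distribution. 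Worth fixing for internal consistency with the rest of the paper.

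Second, you supply two pieces of reasoning that the paper leaves more implicit: a generating-function derivation of $\sum_{l}\binom{d-l}{k-l}\binom{l+n-1}{l}=\binom{d+n}{k}$ where the paper argues by counting monomials of degree $k$ split across two blocks of variables, and an induction on $k$ for the bound $\frac{1}{nd}\binom{d}{k}\binom{d+n}{k}\leq\bigl(\tfrac14 d^2(d+n)\bigr)^{k-1}$ where the paper simply asserts that $\bigl(\frac{1}{nd}\binom{d}{k}\binom{d+n}{k}\bigr)^{1/(k-1)}$ is maximized at $k=2$. Your induction (base case $(d-1)(d+n-1)\leq nd^2$, inductive ratio $\tfrac{(d-k)(d+n-k)}{(k+1)^2}\leq\tfrac{d^2(d+n)}{4}$) is a clean way to make that assertion rigorous. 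Otherwise this is the paper's proof.
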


\begin{proof}
  We can choose (random) coordinates such that~$\zeta = [1:0:\dotsb:0]$.
  Let~$g_0,\dotsc,g_d$ be the polynomials defined in
  Lemma~\ref{lem:bound-norm-derivative}. To begin with, we describe the
  distribution of the random polynomials~$g_0,\dotsc,g_d$. The polynomial $g_0$
  (of degree $0$) is simply zero because~$\zeta$ is a zero of~$f$. The second
  one~$g_1$ has degree~$1$, and as a linear form, it is equal to~$\ud_\zeta f$.
  According to Proposition~\ref{prop:bp-1eq}\ref{item:10}, it is a
  standard normal variable after multiplication
  by~$d^{-\frac12}$. In particular~$d \|g_1\|_W^{-2}$ is the inverse of a
  $\chi^2$-distributed variable with~$2n+2$ degrees of freedom, therefore
  \begin{equation*}
    \bE \left[ \| g_1 \|_W^{-2} \right] = \frac{1}{2nd}.
  \end{equation*}
  For~$l \geq 2$, the polynomial $x_0^{d-l} g_l$
  is the orthogonal projection of~$f$ on the subspace of all 
  polynomials of the form~$x_0^{d-l}
  p(x_1,\dotsc,x_n)$, a which is a subspace of
  \[ \left\{ g\in H_d\st g(\zeta) = 0 \text{ and
    } \ud_\zeta g = 0 \right\} \subset H_d. \]
  Thus, by Proposition~\ref{prop:bp-1eq}\ref{item:11}, $x_0^{d-l} g_l$ is a
  standard normal variable in the appropriate subspace and is independent from~$g_1$.
  In particular, for any~$l \geq 2$,
  \[ \bE\left[ \left\| x_0^{d-l} g_l \right\|_W^2 \right] = \dim_\bR \left\{ x_0^{d-l}
      p(x_1,\dotsc,x_n)\in H_d \right\} = 2\binom{n-1+l}{l}. \]
  Note also that~$\left\| g_1 \right\|_W^{-2} \left\|x_0^{d-l} g_1 \right\|_W^2 = \frac1d$.
  It follows, by Lemma~\ref{lem:bound-norm-derivative}, that
  \begin{align*}
    \bE &\left[ \left\| \ud_\zeta f \right\|^{-2}  \left\| \tfrac{1}{k!} \ud^k_\zeta f \right\|_\Frob^2 \right]
    = \sum_{l=0}^k \bE \left[ \binom{d}{k} \sum_{l=0}^k  \binom{d-l}{k-l} \left\| g_1 \right\|_W^{-2} \left\|x_0^{d-l} g_l \right\|^2_W  \right] \\
    &\leq \binom{d}{k} \left(  \binom{d-1}{k-1} \frac{1}{d} + \sum_{l=2}^k \binom{d-l}{k-l}\bE \left[ \|g_1\|_W^{-2} \right] \bE \left[ \| x_0^{d-l} g_l \|_W^2 \right]  \right) \\
    &\leq \binom{d}{k} \left( \binom{d-1}{k-1} \frac1d  + \frac{1}{nd} \sum_{l = 2}^k \binom{d-l}{k-l} \binom{n-1+l}{l} \right) \\
    &\leq \frac{1}{nd} \binom{d}{k} \sum_{l = 0}^k \binom{d-l}{k-l} \binom{n-1+l}{l} \\
    &= \frac{1}{nd} \binom{d}{k} \binom{d+n}{k}.
  \end{align*}
  To check the binomial identity~$\sum_{l = 0}^k \binom{d-l}{k-l}
  \binom{n-1+l}{l} = \binom{d+n}{k}$, we remark that $\binom{d-l}{k-l}$ counts the
  number of monomials of degree~$k-l$ in~$d-k+1$ variables while $\binom{n+l}{l}$
  counts the number of monomials of degree~$l$ in~$n+1$ variables.
  Therefore, the sum over~$l$ counts the number of monomials of degree~$k$
  in~$(d-k+1)+(n+1)$ variables, that is~$\binom{d+n}{k}$.

  Concerning the second inequality, the maximum value
  of $\left( \frac{1}{nd} \binom{d}{k} \binom{d+n}{k}
  \right)^{\frac{1}{k-1}}$, with~$k\geq 2$,
  is reached for~$k=2$. That is, for any~$k\geq 2$,
  \begin{align*}
  \frac{1}{nd} \binom{d}{k} \binom{d+n}{k} &\leq \left( \frac{1}{nd} \binom{d}{2} \binom{d+n}{2} \right)^{k-1} \\
                                           &\leq \left( \tfrac14 (d-1)  (d+n) \left( \frac{d-1}{n} +1 \right)  \right)^\frac{1}{k-1},
  \end{align*}
  which leads to the claim.
\end{proof}

\begin{lemma}\label{lem:expect-gamma-frob}
  With the same notations as Lemma~\ref{lem:expect-norm-derivative},
  \[ \bE \left[ \gamma_\Frob(f,\zeta)^2 \right] \leq \tfrac14 d^3 (d+n). \] 
\end{lemma}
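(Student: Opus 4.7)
My plan is to deduce the bound from Lemma~\ref{lem:expect-norm-derivative} together with a crude union bound over the finitely many values of $k$ that matter. Writing
\[ B_k \eqdef \|\ud_\zeta f\|^{-2} \left\| \tfrac{1}{k!} \ud_\zeta^k f \right\|_\Frob^2, \qquad M \eqdef \tfrac14 d^2(d+n), \]
the definition~\eqref{eq:36} of $\gamma_\Frob$ gives $\gamma_\Frob(f,\zeta)^2 = \sup_{k\geq 2} B_k^{1/(k-1)}$, and Lemma~\ref{lem:expect-norm-derivative} supplies $\bE[B_k] \leq M^{k-1}$.

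The first observation is that since $f$ has degree $d$, $\ud_\zeta^k f = 0$ for $k > d$, so only the values $2 \leq k \leq d$ contribute. This bounds the supremum by a sum of at most $d-1$ terms:
\[ \gamma_\Frob(f,\zeta)^2 \leq \sum_{k=2}^{d} B_k^{1/(k-1)}. \]
Next, for each fixed $k \geq 2$ the function $x\mapsto x^{1/(k-1)}$ is concave on $[0,\infty)$ (affine when $k=2$), so Jensen's inequality yields
\[ \bE\!\left[ B_k^{1/(k-1)} \right] \leq \bE[B_k]^{1/(k-1)} \leq M. \]
Summing over $k$ from $2$ to $d$ gives $\bE[\gamma_\Frob(f,\zeta)^2] \leq (d-1) M \leq d M = \tfrac14 d^3(d+n)$, which is the claim.

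There is no real obstacle here: once one has Lemma~\ref{lem:expect-norm-derivative}, the only choice to make is how to turn the pointwise supremum into something linear so that the $k$-th root on the expectation can be discharged via Jensen. Replacing the sup by a sum costs only a factor of $d-1$, which is precisely the gap between the exponent $d^2(d+n)$ in Lemma~\ref{lem:expect-norm-derivative} and the exponent $d^3(d+n)$ in the statement.
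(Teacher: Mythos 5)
Your proof is correct and coincides with the paper's own argument: bound the supremum by the finite sum over $2\le k\le d$, apply Jensen's inequality to each term, and invoke Lemma~\ref{lem:expect-norm-derivative}.
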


\begin{proof}
  We bound the supremum in the definition of~$\gamma_\Frob$ by a sum:
  \begin{align*}
    \bE  \left[ \gamma_\Frob(f,\zeta)^2 \right]
    &\leq \sum_{k = 2}^d \bE \left[ \left( \left\| \ud_\zeta f \right\|^{-1} \left\| \tfrac{1}{k!} \ud^k_\zeta f_i \right\|_\Frob \right)^{\frac{2}{k-1}} \right] \\
    &\leq  \sum_{k = 2}^d \bE \left[  \left\| \ud_\zeta f \right\|^{-2} \left\| \tfrac{1}{k!} \ud^k_\zeta f_i \right\|_\Frob^{2}  \right]^{\frac{1}{k-1}}, \text{ by Jensen's inequality,} \\
    &\leq \sum_{k=2}^d  \tfrac14 d^2 (d+n), \text{ by Lemma~\ref{lem:expect-norm-derivative},}\\
    &\leq \tfrac14 d^3 (d+n),
  \end{align*}
  and this is the claim.
\end{proof}

\begin{proposition}\label{prop:complexity-sampling-gaussian}
  If~$F \in \cH[n]$ is a Kostlan random system, then~$\mop{Sample}(F)$
  (Algorithm~\ref{algo:sample})
  outputs a $\rstd$-distributed~$(\bfu, \zeta)\in \cV_F$ with~$O(n^3)$ samplings
  of the normal distribution on~$\bR$ and~$O(n^4+nD^4)$ operations on average.
\end{proposition}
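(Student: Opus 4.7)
The plan is to combine Proposition~\ref{prop:complexity-sampling}, which already analyzes Algorithm~\ref{algo:sample} for general hypersurfaces and reduces the problem to counting $O(n^3)$ Gaussian samples, $O(n^4)$ arithmetic operations, and $n$ root-findings of bivariate homogeneous polynomials of degrees $d_1, \dotsc, d_n$, with a Kostlan-specific bound on the average cost of each root-finding. Correctness of the output distribution is immediate from Theorem~\ref{thm:linearization}, so only the root-findings need a new analysis.

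The key distributional step is to observe that, for each $i$, the binary form $g_i(x, y) \eqdef f_i(xp_i + yq_i)$ whose roots are sought in Step~3 of Algorithm~\ref{algo:sample} is itself a Kostlan random polynomial of degree $d_i$. Since Kostlan's distribution (equivalently, Weyl's Hermitian inner product) is invariant under the natural action of $U(n+1)$ on $H_{d_i}$, and since $L_i = \mop{span}(p_i, q_i)$ is sampled independently of $f_i$, I would extend the orthonormal pair $(p_i, q_i)$ to a unitary basis of $\bC^{n+1}$ and view $g_i$ as the restriction of $f_i$ to the two-dimensional coordinate subspace spanned by the first two basis vectors. A direct computation on the monomial decomposition of Weyl's norm~\eqref{eq:41} then shows that this restriction inherits the Kostlan distribution on the space of binary forms of degree $d_i$.

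It remains to bound the average cost of computing all roots of a Kostlan random binary form of degree at most $D$, where the $6$th type of node from~\S\ref{sec:computational-model} is used to refine approximate zeros when needed. A classical univariate homotopy continuation argument (or, alternatively, the main algorithm of this paper applied in dimension one, starting from $x^D - y^D$ whose zeros are explicit) yields a comfortable $O(D^4)$ bound per polynomial: there are $D$ zeros to find, each requiring on average polynomially many continuation steps, each step costing polynomially in $D$. Summing over the $n$ polynomials contributes $O(nD^4)$ operations, which combined with the $O(n^4)$ arithmetic and $O(n^3)$ Gaussian samples from Proposition~\ref{prop:complexity-sampling} gives the claim.

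The main potential obstacle is neither the distributional identification nor the univariate root-finding bound (which is quite generous, since in dimension one the expected number of continuation steps per zero is polynomial in $D$ only) but rather the bookkeeping: one must check that the substitution and interpolation steps used to produce the binary form $g_i$ from $f_i$ and $(p_i, q_i)$ can also be carried out within the claimed $O(nD^4)$ budget. This last verification reduces to standard interpolation arguments and does not interact with the probabilistic content of the statement.
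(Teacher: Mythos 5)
Your proposal is correct and follows essentially the same path as the paper: reduce via Proposition~\ref{prop:complexity-sampling}, observe that the restriction of a Kostlan polynomial to a uniformly random projective line is again Kostlan-distributed as a binary form because the restriction map is an orthogonal projection for Weyl's inner product, and conclude with an $O(D^4)$ average bound for root-finding of a Kostlan binary form. The one place where the paper is more concrete is the last step: rather than a generic ``classical univariate homotopy'' appeal, it invokes the Beltr\'an--Pardo algorithm to get the $O(D^4)$ average operation count, and then uses the $6$th type of node to turn the approximate root into an exact one; your alternative of running the paper's own rigid continuation from $x^D-y^D$ would need additional care, since one would still have to argue that the returned root is uniformly distributed among the $d_i$ roots, which is exactly what the $\rho_{\mathrm{std}}$ machinery of Beltr\'an--Pardo already delivers.
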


\begin{proof}
  By Proposition~\ref{prop:complexity-sampling},
  $\mop{Sample}(F)$ requires~$O(n^3)$ samplings, $O(n^4)$ operations and~$n$
  times root-finding of bivariate homogeneous polynomials of degree at most~$D$.
  These polynomials are the restrictions of the polynomials~$f_1,\dotsc,f_n$ on
  independent uniformly distributed random projective line~$L_1,\dotsc,L_n$ respectively.
  Choosing random coordinates so that~$L_i$ is spanned by~$[1:0:\dotsb]$
  and~$[0:1:0:\dotsb]$, the restriction of~$f_i$ to~$L_i$ is just
  $f_i(x_0,x_1,0,\dotsc,0)$. So the restriction map~$f\in H_{d_i} \to
  f_{|L_i}$ is an orthogonal projector (or equivalently, the adjoint map~$f \in
  \bC[x_0,x_1]_d \to H_d$ is an isometric embedding, which follows from the
  definition~\eqref{eq:41} of Weyl's norm).
  In particular, $f_i|_{L_i}$ is a Kostlan random polynomial in~$\bC[x_0,x_1]_d$,
  and the algorithm of \textcite{BeltranPardo_2011} computes an approximate root of
  it with $O(D^4)$ operations on average. The 6th type of node recover the exact root.
\end{proof}

\begin{theorem}[Main result]\label{thm:main-result}
  If~$F\in \cH[n]$ is a Kostlan random system, then $\mop{Solve}(F, \one_\cU)$
  (with $g(\bfu, x) \eqdef \hat\gamma_\Frob(\bfu\cdot F, x)$ for the continuation)
  outputs an approximate zero of~$F$ almost surely
  with $O(n^4 D^2)$ continuation steps on average and $O(n^6 D^4 N)$
  operations on average, when~$N\to \infty$.
  When~$\min(n,D)\to \infty$, this is~$N^{1+o(1)}$.
\end{theorem}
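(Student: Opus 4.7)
The plan is to assemble the result from the pieces already developed, since almost all the heavy lifting is done. The Kostlan distribution is unitary invariant (Weyl's norm is $\cU$-invariant), so Theorem~\ref{thm:complexity-uncouple} applies with the choice $\fkg_i \eqdef \gamma_\Frob$. First I would verify the three hypotheses~\ref{item:4}--\ref{item:7} for $\gamma_\Frob$: \ref{item:4} (i.e.\ $\gamma \leq \gamma_\Frob$) is immediate from Lemma~\ref{lem:frob-op-compar}; \ref{item:7} (unitary invariance) is immediate from the $U(n+1)$-invariance of the Frobenius norm; and \ref{item:3} with Lipschitz constant $C' = 5$ is Lemma~\ref{lem:gamma-frob-lip}. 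That justifies plugging $\hat\gamma_\Frob$ into Algorithm~\ref{algo:hc} and applying Theorem~\ref{thm:complexity-uncouple}.

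Next I would bound the expected number of continuation steps. Each $f_i$ is an independently Kostlan-distributed polynomial of degree $d_i \leq D$, so Lemma~\ref{lem:expect-gamma-frob} gives $\bE[\gamma_\Frob(f_i,\zeta_i)^2] \leq \tfrac14 d_i^3(d_i+n) \leq \tfrac14 D^3(D+n)$. Theorem~\ref{thm:complexity-uncouple} then yields
\[ \bE[K] \leq 1800 \cdot 5 \cdot n^3 \bigg( \sum_{i=1}^n \tfrac14 D^3(D+n) \bigg)^{\frac12} = O\!\left( n^{7/2} D^{3/2} \sqrt{D+n}\, \right), \]
and since $\sqrt{D+n} \leq \sqrt{D} + \sqrt{n}$, this is $O(n^{7/2} D^2 + n^4 D^{3/2}) = O(n^4 D^2)$. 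The termination and correctness part follows because, by Proposition~\ref{coro:possibility-of-continuation}, the chosen path in $\cU$ almost surely avoids the singular locus $\Sigma'$, so Theorem~\ref{thm:complexity-numcont} guarantees both that $\mop{Solve}$ terminates in finitely many steps and that the output is an approximate zero.

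For the arithmetic cost, each continuation step costs $O(n^2 D^2 N)$ operations by Corollary~\ref{coro:complexity-splitgamma}, and the sampling of the start system costs $O(n^3)$ normal samplings and $O(n^4 + nD^4)$ operations on average by Proposition~\ref{prop:complexity-sampling-gaussian}. Multiplying the step count by the step cost gives an expected $O(n^6 D^4 N)$ operations, which dominates the sampling contribution since $N \geq n$ and $N \geq D$. This proves the $O(n^6 D^4 N)$ bound.

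Finally, for the asymptotic claim, I would use the bound $N \geq 2^{\min(n,D)}$ given in \S\ref{sec:state-art}. Assuming WLOG $n \leq D$, from $N \geq \binom{D+n}{n} \geq (D/n)^n$ one gets $\log D \leq \log N / n + \log n$, and combined with $n \leq \log_2 N$ this shows that both $\log n = o(\log N)$ and $\log D = o(\log N)$ as $\min(n,D) \to \infty$; hence $n^6 D^4 = N^{o(1)}$ and the total cost is $N^{1+o(1)}$. There is no real obstacle here — the whole proof is just a careful bookkeeping of the estimates from Theorem~\ref{thm:complexity-uncouple}, Lemma~\ref{lem:expect-gamma-frob}, Corollary~\ref{coro:complexity-splitgamma}, and the trivial asymptotic manipulation; the mildly delicate point is just ensuring that $\sqrt{D+n}$ splits into both regimes so the final bound comes out as $n^4 D^2$ rather than an awkward mixed expression.
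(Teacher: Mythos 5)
Your proposal is correct and follows essentially the same route as the paper: verify (H1')--(H3') for $\gamma_\Frob$, apply Theorem~\ref{thm:complexity-uncouple} with $C'=5$, plug in Lemma~\ref{lem:expect-gamma-frob}, and combine with Corollary~\ref{coro:complexity-splitgamma} and Proposition~\ref{prop:complexity-sampling-gaussian}. The only cosmetic differences are that the paper collapses $(D+n)^{1/2}$ via $D+n\leq 2Dn$ rather than via $\sqrt{D+n}\leq\sqrt{D}+\sqrt{n}$, and it states the final asymptotic $n,D = N^{o(1)}$ without the explicit $\binom{D+n}{n}\geq (D/n)^n$ estimate you spell out.
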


\begin{proof}
  We first note that the conditions~\ref{item:4}--\ref{item:7} are statisfied
  for~$\hat\gamma_\Frob$, see~\S \ref{sec:gamma-number-with},
  and that the probability distribution of~$F$ is unitary invariant
  (because~$\cU$ acts isometrically on~$\cH[n]$), therefore
  Theorem~\ref{thm:complexity-uncouple} applies.
  
  For~$1\leq i\leq n$, let~$\zeta_i \in \Proj$ be a uniformly distributed random zero of~$f_i$.
  Concerning the number of continuation steps~$K$,
  Theorem~\ref{thm:complexity-uncouple} gives (with $C'=5$ given by Lemma~\ref{lem:gamma-frob-lip})
  \begin{align*}
    \bE[K] &\leq 9000  \, n^3 \bigg( \sum_{i=1}^n \bE \left[\gamma_\Frob(f_i, \zeta_i)^2\right] \bigg )^\frac12 \\
           &\leq 9000   \,n^3 \bigg( \sum_{i=1}^n \tfrac14 d_i^3 (d_i+n) \bigg )^\frac12, && \text{by Lemma~\ref{lem:expect-gamma-frob},} \\
           &\leq 9000  \,n^3 \big(\tfrac14 n D^3(D+n) \big)^\frac12 \\
           &\leq 9000  \,n^4 D^2, && \text{with~$D+n \leq 2Dn$.}
  \end{align*}
  
  Concerning the total number of operations,
  the cost of Algorithm ``Solve'' splits into the cost of the sampling 
  and the cost of the numerical continuation.
  The former is $O(n^4 + nD^4)$ on average, by Proposition~\ref{prop:complexity-sampling-gaussian}.
  As for the latter, 
  the cost of a step is~$O(n^2D^2N)$
  (Corollary~\ref{coro:complexity-splitgamma}),
  and
  we need $O(n^4 D^2)$ continuation steps on
  average; as~$N\to \infty$, that is~$O(n^6 D^4 N)$ operations, by Corollary~\ref{coro:complexity-splitgamma}.
  When~$\min(n,D)\to \infty$, then both~$n$ and~$D$ are~$\binom{n+D}{n}^{o(1)} =
  N^{o(1)}$.
\end{proof}

\raggedright
\printbibliography

\end{document}